\numberwithin{equation}{section}
\theoremstyle{plain}
\newtheorem{thm}{Theorem}[section]
\newtheorem{lem}[thm]{Lemma}
\newtheorem{prop}[thm]{Proposition}
\newtheorem{cor}[thm]{Corollary}
\theoremstyle{definition}
\newtheorem{defn}[thm]{Definition}
\newtheorem{exmp}[thm]{Example}
\newtheorem{hyp}[thm]{Hypothesis}
\theoremstyle{remark}
\newtheorem{rem}[thm]{Remark}
\def\Acal{\mathcal{A}}
\def\Bcal{\mathcal{B}}
\def\Dcal{\mathcal{D}}
\def\Ecal{\mathcal{E}}
\def\Fcal{\mathcal{F}}
\def\Hcal{\mathcal{H}}
\def\Lcal{\mathcal{L}}
\def\Mcal{\mathcal{M}}
\def\Pcal{\mathcal{P}}
\def\Scal{\mathcal{S}}
\def\Ebb{\mathbb{E}}
\def\Fbb{\mathbb{F}}
\def\Nbb{\mathbb{N}}
\def\Pbb{\mathbb{P}}
\def\Rbb{\mathbb{R}}
\def\Wbb{\mathbb{W}}
\def\1bb{\mathbbm{1}}
\def\Hscr{\mathscr{H}}
\def\Kscr{\mathscr{K}}
\DeclareMathOperator*{\esssup}{ess\,sup}
\DeclareMathOperator{\sto}{sto}
\let\epsilon\varepsilon
\let\phi\varphi
\newcommand{\vertiii}[1]{{\left\vert\kern-0.25ex\left\vert\kern-0.25ex\left\vert #1 
\right\vert\kern-0.25ex\right\vert\kern-0.25ex\right\vert}}
\title{Continuous random field solutions to \\ parabolic SPDEs on p.c.f. fractals}
\author{Ben Hambly\footnote{Mathematical Institute, University of Oxford, Woodstock Road, Oxford, OX2 6GG, UK. Email: hambly@maths.ox.ac.uk.}\ \ 
and Weiye Yang\footnote{Mathematical Institute, University of Oxford, Woodstock Road, Oxford, OX2 6GG, UK. Email: weiye.yang@maths.ox.ac.uk. 
ORCiD: 0000-0003-2104-1218.}}
\date{} 
\begin{document}
\maketitle
\begin{abstract}
We consider a general class of $L^2$-valued stochastic processes that arise primarily as solutions of parabolic SPDEs on p.c.f. fractals. Using a 
Kolmogorov-type continuity theorem, conditions are found under which these processes admit versions which are function-valued and jointly 
continuous in space and time, and the associated H\"older exponents are computed. We apply this theorem to the solutions of SPDEs in the 
theories of both da Prato--Zabczyk and Walsh. We conclude by discussing a version of the parabolic Anderson model on these fractals and demonstrate 
a weak form of intermittency.
\end{abstract}

\section{Introduction}

In the study of the analytic properties of fractal sets, there is a large body of work on so called finitely ramified fractals. These are fractals which 
can be disconnected by the removal of a finite number of points and the Sierpinski gasket is an example. This gives them analytic properties 
closer to sets in one dimensional Euclidean space than to sets in higher dimensions. Our aim is to investigate a broad class of SPDEs on such finitely
ramified sets and for this we will work within the class of p.c.f. self-similar sets as introduced by Kigami, see \cite{Kigami2001}.

Let $(F,(\psi_i)_{i=1}^N)$ be a p.c.f.s.s. set which admits a regular harmonic structure in the sense of \cite{Kigami2001}. This harmonic 
structure is associated with a collection of Laplace operators $\{\Delta_b\}_b$ on $L^2(F)$ (with respect to a natural measure on $F$ to be 
specified later), where $b$ denotes a boundary condition with respect to the space $F$. Each of these Laplacians has an associated contraction 
semigroup $S^b$. We are concerned with stochastic processes $U = (U(t):t \in [0,T])$ on $L^2(F)$ of the form
\begin{equation}\label{mild}
U(t) = \int_0^t S^b_{t-s} \beta_s ds + \int_0^t S^b_{t-s} \sigma_s dW(s)
\end{equation}
for given boundary condition $b$, where $W$ is a cylindrical Wiener process on $L^2(F)$ and $\beta$ and $\sigma$ are processes satisfying 
minimal regularity conditions for the above integrals to make sense for all $t \in [0,T]$. Processes of this form arise as the mild solutions of 
parabolic stochastic partial differential equations on $L^2(F)$, for example the stochastic heat equation 
\begin{equation}\label{she}
dU(t) = \Delta_b U(t)dt + dW(t), \quad t \in [0,T]
\end{equation}
and a version of the parabolic Anderson model
\begin{equation}\label{pam}
\frac{\partial u}{\partial t}(t,x) = \Delta_b u(t,x) + u(t,x)\xi(t,x), \quad (t,x) \in [0,T] \times F,
\end{equation}
where $\xi$ is a space-time white noise on $F$. Note that \eqref{she} is to be interpreted as a stochastic evolution equation on the separable 
Hilbert space $L^2(F)$ in the sense of da Prato and Zabczyk \cite{DaPrato1992}, so its solution is of the form \eqref{mild} by definition. On 
the other hand, \eqref{pam} is a parabolic SPDE on $F$ in the sense of Walsh \cite{Walsh1986}. There is sufficient overlap between these 
two theories on the space $F$ that even the solution of \eqref{pam} can be interpreted as an $L^2(F)$-valued process of the form 
\eqref{mild}---this is explained in detail in Section \ref{applications2}. The main question we address is the following: under what conditions 
does there exist a measurable function $u: \Omega \times [0,T] \times F \to \Rbb$ which is almost surely continuous on $[0,T] \times F$ and 
such that the process $(u(t,\cdot): t \in [0,T])$ is a version of $U$ in \eqref{mild}?

A well-known result in the vein of continuity of solutions to SPDEs concerns the stochastic heat equation in one Euclidean spatial 
dimension, see for example \cite{Walsh1986}. The result is that this equation has a unique mild solution that is essentially 
$\frac{1}{2}$-H\"older continuous in space and essentially $\frac{1}{4}$-H\"older continuous in time. Here 
\textit{essentially $\gamma$-H\"older continuous} means that the given function is H\"older continuous for every exponent strictly less 
than $\gamma$. This result has since been extended in numerous directions, for example \cite{Khoshnevisan2009}, \cite{Foondun2011}, 
\cite{Khoshnevisan2014}, all of which preserve the condition that the underlying space be Euclidean. In \cite{Hambly2016} an analogous 
result is found where the ``spatial'' Euclidean space $\Rbb$ is replaced with a p.c.f.s.s. set $F$ equipped with a regular harmonic structure which
determines a natural metric, called the resistance metric on $F$. 
It is shown in that paper that the stochastic heat equation \eqref{she} on $F$ has a unique mild solution, and that this solution is 
function-valued and (has a version which is) almost surely H\"older continuous in $[0,T] \times F$. We call such a solution a 
\textit{continuous random field} solution. The aim of the present paper is to generalise this result; we seek conditions on $\beta$ and 
$\sigma$ such that the process $U$ given by \eqref{mild} has a version that is a continuous random field, and moreover we derive the 
dependence of its H\"older exponents on the properties of $\beta$, $\sigma$ and $F$. We then apply this result to derive H\"older continuity 
properties of the solutions of SPDEs on $F$ in the theories of both da Prato--Zabczyk and Walsh. In this way we have a single regularity 
result that allows us to prove properties of SPDEs in these two different theories. 

The main theorem of the present paper uses a Kolmogorov-type continuity theorem for stochastic processes indexed by $[0,T] \times F$, 
and extends the result of \cite{Hambly2016} in multiple directions; we use general multiplicative drift and diffusion terms and increase the 
number of boundary conditions that can be considered. Additionally, it can be seen that the unit interval $[0,1]$ has an interpretation as a 
p.c.f. fractal and its associated resistance metric is simply the Euclidean metric, so our theorem thus provides an alternative proof for 
existence of continuous random field versions of SPDEs on $[0,1]$, one that does not rely on Fourier transforms or on the explicit form of 
the heat kernel. A significant issue that arises is the fact that the random field version of $U$ that we construct does not, in general, have 
an explicit expression in terms of known quantities. This contrasts with the case of the stochastic heat equation, the solution of which 
possesses an explicit representation in terms of the spectrum of $\Delta_b$ (\cite[Section 4]{Hambly2016}). This means that certain nice 
properties such as joint measurability of the solution do not come cheaply. A special case that we consider is the situation where the diffusion 
coefficient $\sigma$ is a time-dependent multiplication operator on $L^2(F)$; in such a situation we find that a somewhat weaker condition 
on $\sigma$ is sufficient to obtain the same result. This is the case that corresponds to Walsh-type SPDEs.

Our results cover the version of the parabolic Anderson model over a compact space with time 
dependent potential as given in \eqref{pam}. We will investigate this further  by considering the question of intermittency---does the 
solution exhibit tall peaks over infinitely many different scales? This is shown by the exponential growth of the moments in that the function 
$\lambda(p)$, defined by
\[ \lambda(p) = \lim_{t\to\infty} \frac{1}{t} \log \Ebb |u(t,x)|^p, \]
should have the property that $p\to\lambda(p)/p$ is strictly increasing on $[2,\infty)$. For a weak form of intermittency it is enough to establish that 
$\lambda(2)>0$, \cite{Khoshnevisan2014}, and this is what will do for our model in Theorem~\ref{thm:2ndlower}. We conjecture that, for 
p.c.f. fractals $F$, we have $\lambda(p) = \Theta(p^{2+d_H})$ for large $p$, where $d_H$ is the Hausdorff dimension of $F$ with respect to 
the resistance metric and can take any value in $[1,\infty)$.

The present paper is organised in the following way: In Section \ref{setup} we set up the problem precisely and state the main hypotheses 
and theorem. In Section \ref{randomfield} we construct from the $L^2(F)$-valued process $U$ a candidate collection of random variables 
$(\tilde{u}(t,x):(t,x) \in [0,T] \times F)$. Then in Section \ref{cty} we use the previously mentioned continuity theorem to construct a H\"older 
continuous version $u$ of $\tilde{u}$, which we identify to be a continuous random field version of $U$ and thus we prove the main theorem. 
In Sections \ref{applications1} and \ref{applications2} we give applications of our main theorem to two classes of stochastic partial 
differential equations; these are defined with respect to the SPDE theories developed by da Prato--Zabczyk~\cite{DaPrato1992} and 
Walsh~\cite{Walsh1986} respectively. Finally, in Section~\ref{sec:intermitt}, in the Walsh case, we also prove a number of upper and lower bounds on the moments of global 
solutions under various hypotheses. In particular we establish weak intermittency for the parabolic Anderson model on these fractals.

\section{Set-up and statement of main result}\label{setup}

\subsection{The fractal}

Our set-up is similar to that of \cite{Hambly2016}. Let $N \geq 2$ be an integer, and let $(F,(\psi_i)_{i=1}^N)$ be a connected compact 
post-critically finite self-similar (p.c.f.s.s.) set such that each $\psi_i$ is a strict contraction on $F$ (see \cite{Kigami2001} for details). 
Let $I = \{ 1,\ldots, N \}$, and for $n \geq 0$ let $\Wbb_n = I^n$. Let $\Wbb_* = \bigcup_{n = 0}^\infty \Wbb_n$ and let $\Wbb = I^\Nbb$. 
Elements of $\Wbb_*$ are called \textit{words} and the element of the singleton $\Wbb_0$ is known as the \textit{empty word}. For $w = 
w_1\ldots w_n \in \Wbb_n$ let $\psi_w = \psi_{w_1} \circ \cdots \circ \psi_{w_n}$ and let $F_w = \psi_w(F)$. There is a canonical continuous 
surjection $\pi: \Wbb \to F$ given by defining $\pi(w)$ to be the element of the singleton $ \bigcap_{m=1}^\infty F_{w_1 \ldots w_m}$ 
(see~\cite[Lemma 5.10]{Barlow1998}). Let $P$ be the post-critical set of $(F,(\psi_i)_{i=1}^N)$ and, by assumption this is a finite set. Now 
set $F^0 = \pi(P)$. In view of \cite[Proposition 1.3.5(2)]{Kigami2001}, $F^0$ can be said to be the \textit{boundary} of $F$. The standard 
example is the Sierpinski gasket, but this framework captures the idea of finitely ramified fractals, that is self-similar sets that can be 
disconnected by the removal of a finite number of points.

Let $(A,\textbf{r})$ be a regular irreducible harmonic structure on $F$ (\cite[Section 3.1]{Kigami2001}). Since it is regular we have that 
$\textbf{r} = (r_1,\ldots,r_N)$ with $r_i \in (0,1)$ for each $i \in I$. Let $r_{\max} = \max_{i \in I}r_i$ and $r_{\min} = \min_{i \in I}r_i$ and 
for $w \in \Wbb_m$ let $r_w = r_{w_1}\ldots r_{w_m}$. Let $d_H > 0$ be the unique number such that
\begin{equation*}
\sum_{i \in I} r_i^{d_H} = 1.
\end{equation*}
Let $\mu$ be the self-similar measure on $F$ with weights given by $(r_i^{d_H})_{i \in I}$. Set $\Hcal = L^2(F,\mu)$ with inner product 
$\langle\cdot,\cdot\rangle_\mu$. Then the harmonic structure $(A,\textbf{r})$ is associated with a local regular Dirichlet form $(\Ecal,\Dcal)$ 
on $\Hcal$. This Dirichlet form provides a resistance metric $R$ on $F$ which is compatible with its original topology 
(\cite[Thoerem 3.3.4]{Kigami2001}), and it can be seen that the constant $d_H$ is in fact the Hausdorff dimension of $(F,R)$ 
(\cite[Theorem 4.2.1]{Kigami2001}).

Let $2^{F^0}$ be the power set of $F^0$. For $b \in 2^{F^0}$ define $\Dcal_b = \{ f \in \Dcal: f|_{F^0 \setminus b} = 0 \}$. Then 
$(\Ecal,\Dcal_b)$ is a local regular Dirichlet form on $L^2(F \setminus (F^0 \setminus b), \mu)$ which is naturally associated with a diffusion 
$X^b = (X^b_t)_{t \geq 0}$, which has semigroup $S^b = (S^b_t)_{t \geq 0}$ and generator $\Delta_b$, see \cite{Fukushima2011}. The 
latter is known as the \textit{Laplacian}, as $(-\Delta_b)$ is the self-adjoint operator associated with the closed form $(\Ecal,\Dcal_b)$ (see 
\cite[Sections 1.3 and 1.4]{Fukushima2011}). The value of $b \in 2^{F^0}$ represents the boundary condition that we impose on the 
Laplacian---the operator $\Delta_b$ is the Laplacian with Neumann boundary conditions at elements of $b \subseteq F^0$ and Dirichlet 
boundary conditions at elements of $F^0 \setminus b$. The case $b = F^0$ therefore denotes Neumann boundary conditions and the case 
$b = \emptyset$ denotes Dirichlet boundary conditions. We may occasionally use the notation $b = N$ instead of $b = F^0$ and likewise 
we may use $b = D$ instead of $b = \emptyset$.

\subsection{Preliminaries}

We start with a couple of very useful results on the operator $\Delta_b$. Let $\Lcal(\Hcal)$ be the Banach space of bounded linear 
operators on $\Hcal$ with operator norm $\Vert \cdot \Vert$. If $\Hcal_1, \Hcal_2$ are Hilbert spaces, let $\Lcal_2(\Hcal_1,\Hcal_2)$ be the 
Hilbert space of Hilbert-Schmidt operators from $\Hcal_1$ to $\Hcal_2$ with inner product $\langle \cdot,\cdot\rangle_{\Lcal_2(\Hcal_1,\Hcal_2)}$. 
If $A$ is a linear operator on $\Hcal$ then we denote the domain of $A$ by $\Dcal(A)$.
\begin{defn}
For $\lambda > 0$ let $\Dcal^\lambda$ be the space $\Dcal$ equipped with the inner product
\begin{equation*}
\langle \cdot,\cdot \rangle_\lambda := \Ecal(\cdot,\cdot) + \lambda \langle \cdot,\cdot \rangle_\mu.
\end{equation*}
Since $(\Ecal,\Dcal)$ is closed, $\Dcal^\lambda$ is a Hilbert space.
\end{defn}

\begin{rem}\label{domcl}
The space $\Dcal$ contains only $\frac{1}{2}$-H\"{o}lder continuous functions since by the definition of the resistance metric we have that
\begin{equation}
|f(x) - f(y)|^2 \leq R(x,y) \Ecal(f,f)
\end{equation}
for all $f \in \Dcal$ and all $x,y \in F$. We deduce that for each $b \in 2^{F^0}$, $\Dcal_b$ is a subspace of $\Dcal^\lambda$ with finite 
codimension $|F^0 \setminus b|$. Indeed, $\Dcal_b$ is the intersection of the kernels of the set of evaluation functionals $\{ f \mapsto f(x): x 
\in F^0 \setminus b \}$, which are linear and continuous and linearly independent on $\Dcal^\lambda$. It follows that $\Dcal_b$ is closed in 
$\Dcal^\lambda$.
\end{rem}

\begin{defn}
The unique real $d_H > 0$ such that
\begin{equation*}
\sum_{i \in I} r_i^{d_H} = 1
\end{equation*}
is the \textit{Hausdorff dimension} of $(F,R)$, see \cite[Theorem 1.5.7]{Kigami2001}.

The \textit{spectral dimension} of $(F,R)$ is given by
\begin{equation*}
d_s = \frac{2d_H}{d_H + 1},
\end{equation*}
see \cite[Theorem 4.1.5 and Theorem 4.2.1]{Kigami2001}.
\end{defn}

\begin{rem}
It is possible to show that we must have $d_H \in [1,\infty)$ and thus $d_s \in [1,2)$, see \cite[Remark 2.6(2)]{Hambly2016}.
\end{rem}

\begin{prop}[Spectral theory]\label{spectra}
For $b \in 2^{F^0}$ the following statements hold: 

There exists a complete orthonormal basis $(\phi^b_k)_{k=1}^\infty$ of $\Hcal$ consisting of eigenfunctions of the operator $-\Delta_b$. 
The corresponding eigenvalues $(\lambda^b_k)_{k=1}^\infty$ are non-negative and $\lim_{k \to \infty}\lambda^b_k = \infty$. We assume 
that they are given in ascending order:
\begin{equation*}
0 \leq \lambda^b_1 \leq \lambda^b_2 \leq \cdots.
\end{equation*}
There exist constants $c_0,c_0' > 0$ such that if $k \geq 2$ then 
\begin{equation*}
c_0k^\frac{2}{d_s} \leq \lambda^b_k \leq c_0'k^\frac{2}{d_s}.
\end{equation*}
\end{prop}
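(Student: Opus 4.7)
The plan is to treat the three assertions of the proposition in turn, using the Dirichlet-form framework for the first two and Kigami's eigenvalue counting theory for the last. Since $(\Ecal,\Dcal_b)$ is a closed, symmetric, non-negative form on $\Hcal$, its generator $-\Delta_b$ is self-adjoint and non-negative: for $f \in \Dcal(\Delta_b)$,
\[
\langle -\Delta_b f, f\rangle_\mu = \Ecal(f,f) \geq 0,
\]
so every eigenvalue of $-\Delta_b$ is non-negative and eigenspaces for distinct eigenvalues are $\Hcal$-orthogonal.

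Next I would establish that the inclusion $\Dcal_b \hookrightarrow \Hcal$ is compact. By Remark \ref{domcl}, every $f \in \Dcal_b$ satisfies $|f(x)-f(y)|^2 \leq R(x,y)\,\Ecal(f,f)$, so any subset of $\Dcal_b$ that is bounded in the norm $\langle \cdot,\cdot\rangle_1^{1/2}$ is equi-$\tfrac{1}{2}$-H\"older on the compact metric space $(F,R)$; combined with the $L^2$ bound this forces uniform boundedness in $L^\infty$, and Arzel\`a-Ascoli then gives relative compactness in $C(F)$, hence in $\Hcal$ since $\mu(F)<\infty$. Consequently the resolvent $(I-\Delta_b)^{-1}$ is compact self-adjoint on $\Hcal$, and the spectral theorem for compact self-adjoint operators produces a complete orthonormal basis $(\phi^b_k)$ of $\Hcal$ with resolvent eigenvalues $(1+\lambda^b_k)^{-1}$ accumulating only at $0$; equivalently, the $(\phi^b_k)$ diagonalise $-\Delta_b$ with eigenvalues $\lambda^b_k \geq 0$ satisfying $\lambda^b_k \to \infty$.

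For the Weyl-type two-sided bound I would invoke Kigami's eigenvalue counting theorem \cite[Theorem 4.1.5]{Kigami2001}, which states that for the Dirichlet ($b=\emptyset$) and Neumann ($b=F^0$) Laplacians the counting function $N^b(\lambda)=|\{k:\lambda^b_k\leq \lambda\}|$ satisfies $N^b(\lambda) \asymp \lambda^{d_s/2}$ as $\lambda \to \infty$; inverting gives $\lambda^b_k \asymp k^{2/d_s}$ in these two cases. For general $b\in 2^{F^0}$, the inclusion chain $\Dcal_\emptyset \subseteq \Dcal_b \subseteq \Dcal_{F^0}$ together with the Courant-Fischer min-max formula
\[
\lambda^b_k = \inf_{\substack{V \subseteq \Dcal_b \\ \dim V = k}} \sup_{f \in V\setminus\{0\}} \frac{\Ecal(f,f)}{\|f\|_\mu^2}
\]
yields Dirichlet-Neumann bracketing $\lambda^{F^0}_k \leq \lambda^b_k \leq \lambda^\emptyset_k$, so the bound transfers to every $b$. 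The restriction to $k \geq 2$ simply absorbs the zero eigenvalue of $-\Delta_{F^0}$. The hard step is the Weyl asymptotic itself, which is non-elementary and rests on the self-similar decomposition of $(\Ecal,\Dcal)$ and a renewal analysis of $N^b$; as it is the central content of \cite[Chapter 4]{Kigami2001}, I would simply quote it and regard the remaining spectral-theoretic steps as routine.
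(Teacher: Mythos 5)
Your argument is correct, and it is considerably more explicit than the paper's proof, which simply refers the reader to Kigami's Proposition A.2.11 (for discreteness of the spectrum) together with Lemma 5.1.3 (for the Weyl asymptotics) and the surrounding discussion, without unpacking the details. What you do differently: you derive compactness of the embedding $\Dcal_b \hookrightarrow \Hcal$ from scratch via the resistance estimate, equi-H\"older continuity and Arzel\`a--Ascoli, rather than citing Kigami's abstract operator-theoretic statement, and for the two-sided eigenvalue bound you invoke the counting-function asymptotics only for $b\in\{\emptyset,F^0\}$ and then transfer to general $b$ by Dirichlet--Neumann bracketing through the Courant--Fischer min-max characterisation, whereas the paper relies on Kigami's results being stated generally enough to absorb this. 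Your route buys a self-contained and more transparent argument at the cost of length; the paper's buys brevity at the cost of opacity. Two minor points worth making explicit in a polished write-up: (i) Kigami's Theorem 4.1.5 gives the asymptotics only as $\lambda\to\infty$, so to get the stated bound for all $k\geq 2$ you should note that the finitely many remaining eigenvalues are positive (and, in the Neumann case, $\lambda^N_2>0$), so constants can be adjusted; (ii) the $L^\infty$ bound from the $L^2$ bound plus equi-H\"older continuity requires a short averaging argument (integrate $|f(x)|\leq |f(x)-f(y)|+|f(y)|$ against $\mu(dy)$), which is routine but worth a sentence since it is the hinge of the compactness step.
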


\begin{proof}
With \cite[Lemma 5.1.3]{Kigami2001} in mind, this is immediate from \cite[Proposition A.2.11]{Kigami2001} and subsequent discussion.
\end{proof}

\begin{rem}
We have $k \geq 2$ in the above proposition because of the possibility that $\lambda^b_1 = 0$. This occurs if and only if the non-zero constant functions are elements of $\Dcal_b$, if and only if $b = N$. If this is the case, it follows that $\phi^b_1 \equiv 1$ and the eigenvalue $0$ has multiplicity $1$, so $\lambda^b_2 > 0$. See \cite[Remark 2.9]{Hambly2016}. In the case $b \neq N$ we assume that the constants $c_0,c_0'$ in the proposition above are chosen such that the result in fact holds for $k \geq 1$.
\end{rem}

\subsection{The class of SPDEs}

Let $(\Omega, \Fcal, \Fbb, \Pbb)$ be a filtered probability space that supports a cylindrical Wiener process $W = (W(t))_{t \geq 0}$ on $\Hcal$ 
(note that we always suppress the dependence of stochastic processes on $\omega \in \Omega$).  Let $\Fbb = (\Fcal_t)_{t \geq 0}$ be a filtration 
satisfying the usual conditions, and let $\Pcal$ be the associated predictable $\sigma$-algebra. Let $T > 0$ be fixed. We are concerned with 
stochastic partial differential equations on $\Hcal$ of the form
\begin{equation}\label{SPDE}
\begin{split}
dU(t) &= \Delta_b U(t)dt + \beta_tdt + \sigma_tdW(t)\quad t \in [0,T],\\
U(0) &= 0,
\end{split}
\end{equation}
where $\beta: \Omega \times [0,T] \to \Hcal$ and $\sigma: \Omega \times [0,T] \to \Lcal(\Hcal)$ are predictable processes, and $b \in 2^{F^0}$. 
Our question is not one of existence or uniqueness of $U$---we simply assume that a predictable $\Hcal$-valued process $U = (U(t))_{t \in [0,T]}$ 
exists and satisfies \eqref{SPDE}. In particular, we assume that $U$ satisfies \eqref{SPDE} in the \textit{mild} sense: for each $t \in [0,T]$,
\begin{equation}\label{SPDEmild}
U(t) = \int_0^t S^b_{t-s} \beta_s ds + \int_0^t S^b_{t-s} \sigma_s dW(s)
\end{equation}
almost surely. This means that a priori we require $\beta$ and $\sigma$ to be at least regular enough such that the two integrals on the right-hand 
side of the above are well-defined for all $t \in [0,T]$. To be precise, we need the following to hold almost surely for each $t \in [0,T]$:
\begin{equation}\label{aprioricond}
\begin{split}
\int_0^t \Vert S^b_{t-s} \beta_s \Vert_\mu ds &< \infty,\\
\int_0^t \Vert S^b_{t-s} \sigma_s \Vert_{\Lcal_2(\Hcal,\Hcal)}^2 ds &< \infty.
\end{split}
\end{equation}
We will shortly make hypotheses that automatically guarantee \eqref{aprioricond}, so these do not need to be checked explicitly.
\begin{rem}
While \eqref{SPDE} may seem to be an uninteresting equation, in that there is no explicit dependence of $\beta$ and $\sigma$ on $U$, it actually 
encompasses the solutions to a large class of SPDEs. For example, if we take $Y$ to be a mild solution of the SPDE
\begin{equation*}
\begin{split}
dY(t) &= \Delta_b Y(t)dt + f(t,Y(t))dt + g(t,Y(t))dW(t)\quad t \in [0,T],\\
Y(0) &= Y_0 \in \Hcal
\end{split}
\end{equation*}
where the functions $f: \Omega \times [0,T] \times \Hcal \to \Hcal$ and $g: \Omega \times [0,T] \times \Hcal \to \Lcal(\Hcal)$ satisfy appropriate 
measurability conditions, then setting $\beta_t = f(t,Y(t))$ and $\sigma_t = g(t,Y(t))$ we see that the process $U$ given by $U(t) = Y(t) - S^b_tY_0$ 
satisfies \eqref{SPDEmild}.
\end{rem}
\begin{defn}
A (square-integrable) \textit{random field} (on $[0,T] \times F$) is a function $u: \Omega \times [0,T] \times F \to \Rbb$ such that $u(t,x): \Omega 
\to \Rbb$ is a random variable for each $(t,x) \in [0,T] \times F$, and $(u(t,\cdot))_{t \in [0,T]}$ is an $\Hcal$-valued stochastic process. It is a 
\textit{continuous random field} if $u$ is, in addition, almost surely jointly continuous in $[0,T] \times F$.
\end{defn}
\begin{rem}
Any continuous random field must be jointly measurable in $\Omega \times [0,T] \times F$ by \cite[Lemma 4.51]{Aliprantis2006} and the fact 
that $[0,T] \times F$ is compact.
\end{rem}
\begin{defn}\label{essentially}
Let $(M_1,d_1)$ and $(M_2,d_2)$ be metric spaces and let $\delta \in (0,1]$. A function $f: M_1 \to M_2$ is \textit{essentially $\delta$-H\"{o}lder 
continuous} if for each $\gamma \in (0,\delta)$ there exists $C_\gamma > 0$ such that
\begin{equation*}
d_2(f(x),f(y)) \leq C_\gamma d_1(x,y)^\gamma
\end{equation*}
for all $x,y \in M_1$.
\end{defn}
Define a metric $R_\infty$ on $\Rbb \times F$ by $R_\infty((s,x),(t,y)) = |s-t| \vee R(x,y)$. We now give two separate sets of hypotheses for the 
behaviour of the processes $\beta$ and $\sigma$ and state our main theorem.
\begin{hyp}\label{hyp1}
There exist $p > (d_H + 1)^2$ and $K > 0$ such that
\begin{equation*}
\begin{split}
\Ebb\left[ \left( \int_0^T \Vert\beta_s\Vert_\mu^2 ds \right)^p \right] &\leq K,\\
\sup_{s \in [0,T]} \Ebb\left[ \Vert \sigma_s \Vert^{2p} \right] &\leq K.
\end{split}
\end{equation*}
\end{hyp}
\begin{defn}
For a measurable function $f: F \to \Rbb$, let $\Mcal_f$ be the multiplication operator on $\Hcal$ associated with $f$. That is, $\Mcal_fh$ is the 
pointwise multiplication $f \cdot h := (x \mapsto f(x)h(x))$ for all $h \in \Hcal$ such that $f \cdot h \in \Hcal$.
\end{defn}
Note that for $f: F \to \Rbb$ measurable, $\Mcal_f \in \Lcal(\Hcal)$ if and only if
\begin{equation*}
\esssup_{x \in F} |f(x)| := \inf\{ c > 0: \mu(\{ x \in F: |f(x)| > c \}) = 0 \} < \infty.
\end{equation*}
Additionally if the above inequality holds then
\begin{equation*}
\Vert \Mcal_f \Vert = \esssup_{x \in F} |f(x)|,
\end{equation*}
see \cite[Theorem II.1.5]{Conway1990}. Note also that $\esssup_{x \in F} |f(x)| \leq \sup_{x \in F} |f(x)|$.

Our second set of hypotheses is similar to the first one, but we restrict the space of operators in which the process $\sigma$ can take values 
to the set of multiplication operators. This allows us to weaken the integrability condition on $\sigma$ given in Hypothesis \ref{hyp1}.
\begin{hyp}\label{hyp2}
There exists a jointly measurable function $\tilde{\sigma}: \Omega \times [0,T] \times F \to \Rbb$ such that for each $t \in [0,T]$,
\begin{equation*}
\sigma_t = \Mcal_{\tilde{\sigma}(t,\cdot)}
\end{equation*}
almost surely. There exist $p > (d_H + 1)^2$ and $K > 0$ such that
\begin{equation*}
\begin{split}
\Ebb\left[ \left( \int_0^T \Vert\beta_s\Vert_\mu^2 ds \right)^p \right] &\leq K,\\
\sup_{(s,x) \in [0,T] \times F} \Ebb\left[ | \tilde{\sigma}(s,x) |^{2p} \right] &\leq K.
\end{split}
\end{equation*}
\end{hyp}
The following result shows that we need not check the conditions \eqref{aprioricond} if we have assumed either of our above hypotheses:
\begin{prop}
Assume either Hypothesis \ref{hyp1} or Hypothesis \ref{hyp2}. Then the conditions \eqref{aprioricond} are satisfied.
\end{prop}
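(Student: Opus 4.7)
The plan is, for each fixed $t \in [0,T]$, to bound the \emph{expectation} of each integrand appearing in \eqref{aprioricond} and thereby deduce almost-sure finiteness. For the first integral, since $-\Delta_b$ is non-negative self-adjoint, the semigroup $S^b$ is a contraction on $\Hcal$, so $\Vert S^b_{t-s}\beta_s\Vert_\mu \leq \Vert\beta_s\Vert_\mu$. Cauchy--Schwarz then gives
\begin{equation*}
\int_0^t \Vert S^b_{t-s}\beta_s\Vert_\mu\, ds \leq \sqrt{T}\left(\int_0^T \Vert\beta_s\Vert_\mu^2\, ds\right)^{1/2},
\end{equation*}
and the integrability condition on $\beta$ present in both hypotheses yields finite expectation via Jensen's inequality.

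For the second integral I first control the HS norm of the semigroup itself. Expanding in the eigenbasis of $-\Delta_b$ gives $\Vert S^b_{t-s}\Vert_{\Lcal_2(\Hcal,\Hcal)}^2 = \sum_{k=1}^\infty e^{-2\lambda^b_k(t-s)}$, and a standard integral comparison using the Weyl asymptotics $\lambda^b_k \geq c_0 k^{2/d_s}$ from Proposition \ref{spectra} yields $\Vert S^b_{t-s}\Vert_{\Lcal_2(\Hcal,\Hcal)}^2 \leq C(t-s)^{-d_s/2}$ for some $C > 0$. Since $d_s \in [1,2)$, this upper bound is integrable in $s$ on $[0,t]$. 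Under Hypothesis \ref{hyp1}, submultiplicativity $\Vert AB\Vert_{\Lcal_2(\Hcal,\Hcal)} \leq \Vert A\Vert\,\Vert B\Vert_{\Lcal_2(\Hcal,\Hcal)}$, combined with the Jensen bound $\sup_s \Ebb\Vert\sigma_s\Vert^2 \leq K^{1/p}$ and Fubini, gives
\begin{equation*}
\Ebb \int_0^t \Vert S^b_{t-s}\sigma_s\Vert_{\Lcal_2(\Hcal,\Hcal)}^2\, ds \leq K^{1/p} C \int_0^t (t-s)^{-d_s/2}\, ds < \infty.
\end{equation*}

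The main obstacle is Hypothesis \ref{hyp2}: now $\Vert\sigma_s\Vert = \esssup_{x \in F}|\tilde{\sigma}(s,x)|$ is not controlled by the pointwise moment bound on $\tilde\sigma$, so the submultiplicativity argument breaks down. The remedy is to expand the HS norm directly in the eigenbasis of $-\Delta_b$, exploiting the self-adjointness of both $S^b_{t-s}$ and $\Mcal_{\tilde\sigma(s,\cdot)}$:
\begin{equation*}
\Vert S^b_{t-s}\Mcal_{\tilde\sigma(s,\cdot)}\Vert_{\Lcal_2(\Hcal,\Hcal)}^2 = \sum_{k=1}^\infty \Vert \Mcal_{\tilde\sigma(s,\cdot)} S^b_{t-s} \phi^b_k\Vert_\mu^2 = \sum_{k=1}^\infty e^{-2\lambda^b_k(t-s)} \int_F |\tilde\sigma(s,x)|^2 |\phi^b_k(x)|^2\, \mu(dx).
\end{equation*}
The right-hand side is a sum of non-negative terms, so Tonelli's theorem (which uses the joint measurability afforded by Hypothesis \ref{hyp2}) allows us to interchange expectation with both the sum and the $x$-integral. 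Applying the pointwise bound $\Ebb|\tilde\sigma(s,x)|^2 \leq K^{1/p}$ (Jensen) and the $L^2(\mu)$-normalisation of $\phi^b_k$ to collapse the $x$-integral yields $\Ebb\Vert S^b_{t-s}\sigma_s\Vert_{\Lcal_2(\Hcal,\Hcal)}^2 \leq K^{1/p}\Vert S^b_{t-s}\Vert_{\Lcal_2(\Hcal,\Hcal)}^2$, and we finish by integrating in $s$ as in the previous paragraph.
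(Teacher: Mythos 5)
Your proof is correct and uses essentially the same strategy as the paper: contraction of $S^b$ for the $\beta$ term, Hilbert--Schmidt estimates via the eigenbasis of $-\Delta_b$ combined with submultiplicativity for $\sigma$ under Hypothesis \ref{hyp1}, and self-adjointness of multiplication operators plus Tonelli for $\sigma$ under Hypothesis \ref{hyp2}. The only cosmetic difference is that you bound $\Vert S^b_{t-s}\Vert_{\Lcal_2(\Hcal,\Hcal)}^2$ pointwise by $C(t-s)^{-d_s/2}$ (valid on $(0,T]$ after absorbing the bounded $k=1$ Neumann mode into the constant) before integrating in $s$, whereas the paper interchanges the $s$-integral with the $k$-sum and integrates each $e^{-2\lambda_k^b s}$ separately; both routes ultimately hinge on $d_s < 2$.
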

\begin{proof}
Fix $t \in [0,T]$. Assume Hypothesis \ref{hyp1}. We have
\begin{equation*}
\int_0^t \Vert S^b_{t-s} \beta_s \Vert_\mu ds \leq \int_0^t \Vert  \beta_s \Vert_\mu ds < \infty
\end{equation*}
almost surely, and
\begin{equation*}
\begin{split}
\Ebb \left[ \int_0^t \Vert S^b_{t-s} \sigma_s \Vert_{\Lcal_2(\Hcal,\Hcal)}^2 ds \right] & \leq \int_0^t \Ebb \left[ \Vert \sigma_s \Vert^2 \right] \Vert S^b_{t-s} \Vert_{\Lcal_2(\Hcal,\Hcal)}^2 ds\\
&= \sup_{s \in [0,T]} \Ebb \left[ \Vert \sigma_s \Vert^2 \right] \int_0^t \sum_{k=1}^\infty \Vert S^b_s \phi^b_k \Vert_\mu^2 ds\\
&= \sup_{s \in [0,T]} \Ebb \left[ \Vert \sigma_s \Vert^2 \right] \sum_{k=1}^\infty \int_0^t e^{-2\lambda^b_k s} ds\\
&< \infty
\end{split}
\end{equation*}
where the last inequality follows by Proposition \ref{spectra} and the fact that $d_s < 2$.

Now we instead assume Hypothesis \ref{hyp2}. The condition on $\beta$ is the same as in Hypothesis \ref{hyp1}. Let $\sigma^*_s$ be the 
adjoint of $\sigma_s$ for each $s \in [0,T]$. We note that (bounded) multiplication operators are self-adjoint (\cite[Example VI.1.5]{Conway1990}), 
so we have that
\begin{equation*}
\begin{split}
\Ebb \left[ \int_0^t \Vert S^b_{t-s} \sigma_s \Vert_{\Lcal_2(\Hcal,\Hcal)}^2 ds \right] &= \int_0^t \Ebb \left[ \Vert \sigma^*_s S^b_{t-s} \Vert_{\Lcal_2(\Hcal,\Hcal)}^2 \right] ds\\
&= \int_0^t \Ebb \left[ \sum_{k=1}^\infty \Vert \Mcal_{\tilde{\sigma}(s,\cdot)} S^b_{t-s} \phi^b_k \Vert_\mu^2 \right] ds\\
&= \sum_{k=1}^\infty \int_0^t e^{-2\lambda^b_k (t-s)} \Ebb \left[ \Vert \Mcal_{\tilde{\sigma}(s,\cdot)} \phi^b_k \Vert_\mu^2 \right] ds\\
&= \sum_{k=1}^\infty \int_0^t e^{-2\lambda^b_k (t-s)} \int_F \Ebb \left[ \tilde{\sigma}(s,x)^2 \right] \phi^b_k(x)^2 \mu(dx) ds\\
&\leq \sup_{(s,x) \in [0,T] \times F}\Ebb \left[ \tilde{\sigma}(s,x)^2 \right] \sum_{k=1}^\infty \int_0^t e^{-2\lambda^b_k s} ds\\
&< \infty
\end{split}
\end{equation*}
where the last inequality follows by Proposition \ref{spectra} and the fact that $d_s < 2$.
\end{proof}
The following is the main result of the present paper:
\begin{thm}\label{mainthm}
Assume either Hypothesis \ref{hyp1} or Hypothesis \ref{hyp2}. Then the process $U$ given by \eqref{SPDEmild} has a version which is a 
continuous random field $u: \Omega \times [0,T] \times F \to \Rbb$. In particular $u$ satisfies the following:
\begin{enumerate}
\item $u$ is almost surely essentially $\frac{1}{2}\left((d_H + 1)^{-1} - \frac{d_H + 1}{p} \right)$-H\"{o}lder continuous in $[0,T] \times F$ with
 respect to $R_\infty$,
\item For each $t \in [0,T]$, $u(t,\cdot)$ is almost surely essentially $\frac{1}{2}\left(1 - \frac{d_H}{p} \right)$-H\"{o}lder continuous in $F$ with respect to $R$,
\item For each $x \in F$, $u(\cdot,x)$ is almost surely essentially $\frac{1}{2}\left((d_H + 1)^{-1} - \frac{1}{p} \right)$-H\"{o}lder continuous in $[0,T]$.
\end{enumerate}
\end{thm}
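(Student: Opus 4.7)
The plan is to construct a pointwise candidate $\tilde u(t,x)$ for $U(t)(x)$ using the spectral decomposition of $S^b$, verify that it defines a version of $U$ in $\Hcal$, estimate moments of its increments, and apply a Kolmogorov-type continuity theorem on the compact metric space $([0,T]\times F, R_\infty)$. Explicitly, using Proposition \ref{spectra} one sets
\[
\tilde u(t,x) := \sum_{k=1}^\infty \phi^b_k(x) \int_0^t e^{-\lambda^b_k(t-s)} \Bigl( \langle \phi^b_k, \beta_s \rangle_\mu\, ds + \langle \sigma^*_s \phi^b_k, dW(s) \rangle_\mu \Bigr).
\]
Convergence of this series in $L^{2p}(\Omega)$ at each fixed $(t,x)$ follows from Hypothesis \ref{hyp1} or \ref{hyp2} together with $\lambda^b_k \asymp k^{2/d_s}$ and suitable bounds on $\phi^b_k$; a Fubini-type argument then identifies $(t,x) \mapsto \tilde u(t,x)$ as a version of $U$ in $\Hcal$, matching the construction announced for Section \ref{randomfield}.

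The heart of the proof is to establish the moment bounds
\[
\Ebb\bigl[|\tilde u(t,x) - \tilde u(t,y)|^{2p}\bigr] \leq C\, R(x,y)^p, \qquad \Ebb\bigl[|\tilde u(t,x) - \tilde u(s,x)|^{2p}\bigr] \leq C\, |t-s|^{p/(d_H+1)}.
\]
I would first reduce higher moments to second moments via BDG combined with Minkowski, and then expand in the eigenbasis $(\phi^b_k)$. The spatial second moment takes the shape $\sum_k (\phi^b_k(x) - \phi^b_k(y))^2/\lambda^b_k$, up to contributions from $\beta$ and $\sigma$; one bounds it by splitting the sum at the spectral scale matched to $R(x,y)$, using on the low-$k$ side the resistance inequality of Remark \ref{domcl} in the form $(\phi^b_k(x) - \phi^b_k(y))^2 \leq R(x,y)\lambda^b_k$, and on the high-$k$ side uniform bounds on $\phi^b_k$ together with the spectral asymptotics. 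For the temporal bound, splitting $\tilde u(t,x) - \tilde u(s,x)$ into past and new noise contributions, the resulting spectral sums are controlled via $(1-e^{-\lambda^b_k|t-s|})^2/\lambda^b_k \leq |t-s|\wedge(1/\lambda^b_k)$, and the algebraic identity $(2-d_s)/2 = (d_H+1)^{-1}$ produces the exponent $|t-s|^{1/(d_H+1)}$. Under Hypothesis \ref{hyp1} the $\sigma$-contribution is controlled by factoring $\Vert\sigma_s\Vert^{2p}$ out and using the Hilbert--Schmidt norm of $S^b_{t-s}$; under Hypothesis \ref{hyp2} the multiplication-operator structure permits direct pointwise control by $\Ebb[\tilde\sigma(s,y)^{2p}]$, which is the weaker moment hypothesis.

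Combining these via the triangle inequality, and using the boundedness of $R_\infty$ on the compact index set to absorb higher powers into lower ones, yields the joint bound
\[
\Ebb\bigl[|\tilde u(t,x) - \tilde u(s,y)|^{2p}\bigr] \leq C\, R_\infty((t,x),(s,y))^{p/(d_H+1)}.
\]
A Kolmogorov-type continuity theorem on $([0,T]\times F, R_\infty)$, whose upper box dimension is $d_H+1$, then produces a continuous version $u$ of $\tilde u$ with essentially $\delta$-H\"older regularity where
\[
\delta = \frac{p/(d_H+1) - (d_H+1)}{2p} = \tfrac12\Bigl((d_H+1)^{-1} - \tfrac{d_H+1}{p}\Bigr),
\]
which is positive precisely because $p > (d_H+1)^2$. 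The spatial-only and temporal-only exponents are obtained by applying Kolmogorov on $(F,R)$ (dimension $d_H$) and on $([0,T],|\cdot|)$ (dimension $1$) respectively, giving the remaining two exponents $\tfrac12(1 - d_H/p)$ and $\tfrac12((d_H+1)^{-1} - 1/p)$.

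The main obstacle I anticipate lies in the reduction to second moments under Hypothesis \ref{hyp1}: the stochastic convolution is not Gaussian because $\sigma$ is random and merely operator-valued, so one needs a vector-valued BDG inequality and must carefully factorise the estimates through $\Vert\sigma_s\Vert^{2p}$ so that the moment bound in Hypothesis \ref{hyp1} can be applied. The Hypothesis \ref{hyp2} case is notably cleaner because $\sigma_t$ acts pointwise, permitting direct control at the level of individual spectral coefficients without needing a Hilbert--Schmidt factorisation of $\sigma_s$.
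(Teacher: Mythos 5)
The student takes a genuinely different route from the paper — a direct spectral series
\[
\tilde u(t,x) = \sum_{k=1}^\infty \phi^b_k(x)\,\langle U(t),\phi^b_k\rangle_\mu
\]
rather than the paper's construction via delta-approximants $f^x_n$ — but the route as described has a real gap in the core moment estimates. The spatial second-moment quantity you are asking to bound is, in effect, $\sum_k (\phi^b_k(x)-\phi^b_k(y))^2/(1+\lambda^b_k)$, and you propose to split this sum at a scale $K$ matched to $R(x,y)$: for low $k$ the inequality $(\phi^b_k(x)-\phi^b_k(y))^2\leq R(x,y)\lambda^b_k$ works, but for the tail $k>K$ you appeal to ``uniform bounds on $\phi^b_k$ together with the spectral asymptotics.'' This fails: the only eigenfunction bound available in the paper is $\sup_x|\phi^b_k(x)|=O(k^{1/d_s})$ (Lemma \ref{thm:efuncgrow}), and even the sharper $O(k^{1/2})$ extractable from the on-diagonal heat-kernel bound gives a tail $\sum_{k>K} k^{1-2/d_s}$, which diverges for every $d_s\in[1,2)$. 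In fact no pointwise bound on eigenfunctions that does not see cancellations can make $\sum_k \phi^b_k(x)^2/(1+\lambda^b_k)$ converge; that this sum is finite (and equals $\rho^b_1(x,x)$) is a nontrivial structural fact. For the same reason, your opening claim that the series for $\tilde u(t,x)$ ``converges in $L^{2p}(\Omega)$ at each fixed $(t,x)$ \ldots\ together with suitable bounds on $\phi^b_k$'' is not justified by any bound stated or provable at that point in the paper.

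The paper's own proof routes around precisely this difficulty. Proposition \ref{apriori} shows that for any $h\in\Hcal$ the $2p$-th moments of $\langle U(t),h\rangle_\mu$ are controlled by $\bigl(\iint \rho^b_1 \, h\otimes h\, d\mu\otimes d\mu\bigr)^p$, and the Lipschitz continuity and boundedness of the resolvent density $\rho^b_1$ (Proposition \ref{resolvLip} and the Barlow reproducing-kernel machinery) do the job of the missing eigenfunction bounds. The candidate $\tilde u(t,x)$ is then defined as the $L^{2p}$-limit of $\langle U(t),f^x_n\rangle_\mu$ for indicator-based delta approximants $f^x_n$, and the Hölder estimates (Propositions \ref{spatial} and \ref{temporal}) use only $\rho^b_1$ and Lemma \ref{semigpincrem}; the spectral decomposition never appears. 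The introduction even flags this: the additive-noise stochastic heat equation admitted a spectral representation, whereas here it does not. Your plan could be repaired by replacing all eigenfunction-supremum estimates with resolvent-density estimates, but at that point you are effectively reconstructing the paper's argument in spectral notation. The back end of your proposal (BDG/Minkowski reduction, combining spatial/temporal moments, applying a Kolmogorov theorem on a space of box dimension $d_H+1$, the arithmetic that produces the three exponents) is correct and essentially matches the paper; the gap is localised to the claimed control of the spectral tails.
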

\begin{exmp}
We compare this with the H\"{o}lder continuity results proven in \cite[Theorem 5.6]{Hambly2016} for the stochastic heat equation on a p.c.f. fractal. 
In this case $\beta$ and $\sigma$ are both deterministic and constant, so in Hypothesis \ref{hyp1} we are able to take $p \to \infty$ and there is an 
additional smoothing parameter $\alpha$. In the case $\alpha = 0$, \cite[Theorem 5.6]{Hambly2016} and Theorem \ref{mainthm} then give the 
same H\"{o}lder exponents for the random field solution to the stochastic heat equation (since $(d_H + 1)^{-1} = (1 - \frac{d_s}{2})$). However this 
is not the case for general $\alpha$, and this is because in the present paper we do not take into account any potential smoothing properties of the 
process $\sigma$.
\end{exmp}

\section{Construction of random field}\label{randomfield}

\subsection{Resolvent density}

We develop and expand some theory from \cite[Section 4]{Hambly2016}:
\begin{defn}
For $\lambda > 0$ and $b \in 2^{F^0}$ let $\rho^b_\lambda: F \times F \to \Rbb$ be the \textit{resolvent density} associated with $\Delta_b$. 
By \cite[Theorem 7.20]{Barlow1998}, $\rho^N_\lambda$ exists and satisfies the following:
\begin{enumerate}
\item (Reproducing kernel property.) For $x \in F$, $\rho^N_\lambda(x,\cdot)$ is the unique element of $\Dcal_N = \Dcal$ such that
\begin{equation*}
\langle \rho^N_\lambda(x,\cdot), f \rangle_\lambda = f(x)
\end{equation*}
for all $f \in \Dcal_N$.
\item (Resolvent kernel property.) For all continuous $f \in \Hcal$ and all $x \in F$,
\begin{equation*}
\int_0^\infty e^{-\lambda t} S^N_tf(x) dt = \int_F \rho^N_\lambda(x,y)f(y)\mu(dy).
\end{equation*}
By a density argument it follows that for all $f \in \Hcal$,
\begin{equation*}
\int_0^\infty e^{-\lambda t} S^N_tf dt = \int_F \rho^N_\lambda(\cdot,y)f(y)\mu(dy).
\end{equation*}
\item $\rho^N_\lambda$ is symmetric and bounded. We define (for now) $c_\rho(\lambda) > 0$ such that
\begin{equation*}
c_\rho(\lambda) \geq \sup_{x,y \in F}|\rho^N_\lambda(x,y)|.
\end{equation*}
\item (H\"{o}lder continuity.) For this same constant $c_\rho(\lambda)$ we have that for all $x,y,y' \in F$,
\begin{equation*}
|\rho^N_\lambda(x,y) - \rho^N_\lambda(x,y')|^2 \leq c_\rho(\lambda) R(y,y').
\end{equation*}
Using symmetry this H\"{o}lder continuity result holds in the first argument as well.
\end{enumerate}
By an identical argument to \cite[Theorem 7.20]{Barlow1998}, $\rho^b_\lambda$ exists for each $b \in 2^{F^0}$ and satisfies the analogous results with $(\Ecal,\Dcal_b)$ and the semigroup $S^b$. By the reproducing kernel property it follows that for every $x \in F$, $\rho^b_\lambda(x,\cdot)$ must be the $\Dcal^\lambda$-orthogonal projection of $\rho^N_\lambda(x,\cdot)$ onto $\Dcal_b$. We now choose $c_\rho(\lambda)$ large enough that it does not depend on the value of $b \in 2^{F^0}$ for (3) and (4).
\end{defn}
\begin{prop}[Lipschitz resolvent]\label{resolvLip}
If $\lambda > 0$, $b \in 2^{F^0}$ and $x,x',y \in F$ then
\begin{equation*}
\left\vert \rho^b_\lambda(x',y) - \rho^b_\lambda(x,y) \right\vert \leq 2 R(x,x').
\end{equation*}
\end{prop}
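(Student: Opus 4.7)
The plan is to exploit the reproducing-kernel structure of $\rho^b_\lambda$ in the Hilbert space $(\Dcal_b,\langle\cdot,\cdot\rangle_\lambda)$ combined with a maximum-principle argument. Applying the resistance-metric inequality of Remark~\ref{domcl} directly to $f = \rho^b_\lambda(y,\cdot)$ yields only the H\"older-$\tfrac12$ estimate $|\rho^b_\lambda(x',y)-\rho^b_\lambda(x,y)|^2\leq R(x,x')\,\rho^b_\lambda(y,y)$, so a sharper decomposition is needed.

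First I would split off the ``diagonal part.'' Assuming $\rho^b_\lambda(x',x')>0$ (the case $x'\in F^0\setminus b$ forces $\rho^b_\lambda(x',\cdot)\equiv 0$ by symmetry and is handled directly using Remark~\ref{domcl}), let $\Kscr:=\{f\in\Dcal_b:f(x')=0\}$, a closed codimension-one subspace of $(\Dcal_b,\langle\cdot,\cdot\rangle_\lambda)$ whose orthogonal complement is spanned by the reproducing-kernel vector $\rho^b_\lambda(x',\cdot)$. Denote by $G^{x'}_\lambda(\cdot,y)$ the $\langle\cdot,\cdot\rangle_\lambda$-orthogonal projection of $\rho^b_\lambda(\cdot,y)$ onto $\Kscr$. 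Testing the resulting identity at $z=x'$ pins down the coefficient, giving
\[
\rho^b_\lambda(\cdot,y) \;=\; G^{x'}_\lambda(\cdot,y) \;+\; \frac{\rho^b_\lambda(x',y)}{\rho^b_\lambda(x',x')}\,\rho^b_\lambda(x',\cdot),
\]
and evaluation at $z=x$ yields
\[
\rho^b_\lambda(x,y) - \rho^b_\lambda(x',y) \;=\; G^{x'}_\lambda(x,y) \;-\; \rho^b_\lambda(x',y)\!\left(1 - \tfrac{\rho^b_\lambda(x',x)}{\rho^b_\lambda(x',x')}\right).
\]

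Next I would deploy two applications of the standard Markovian truncation argument for the local regular Dirichlet form $(\Ecal,\Dcal_b)$ to prove the maximum principles $0\leq\rho^b_\lambda(x',z)\leq\rho^b_\lambda(x',x')$ and $0\leq G^{x'}_\lambda(z,x)\leq G^{x'}_\lambda(x,x)$ for every $z\in F$. In each case one truncates the function $f$ at $0$ from below or at its value at the reproducing point from above; the truncated function $\tilde f$ lies in the appropriate subspace, satisfies $\|\tilde f\|_\lambda\leq\|f\|_\lambda$ by unit contraction, and the reproducing-kernel identity $\langle f,\tilde f\rangle_\lambda = \tilde f(\text{reproducing point})=\|f\|_\lambda^2$ combined with Cauchy--Schwarz forces equality and hence $\tilde f = f$. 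The first bound makes the parenthesised factor above lie in $[0,1]$, so the second term on the right is nonpositive and $\rho^b_\lambda(x,y)-\rho^b_\lambda(x',y)\leq G^{x'}_\lambda(x,y)$; symmetrically $\rho^b_\lambda(x',y)-\rho^b_\lambda(x,y)\leq G^{x}_\lambda(x',y)$. The second bound, together with the symmetry of $G^{x'}_\lambda$, reduces off-diagonal estimates to the diagonal: $G^{x'}_\lambda(x,y)\leq G^{x'}_\lambda(x,x)$.

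To finish, $G^{x'}_\lambda(\cdot,x)\in\Dcal$ vanishes at $x'$ and equals $G^{x'}_\lambda(x,x)$ at $x$, so Remark~\ref{domcl} together with $\Ecal(G^{x'}_\lambda(\cdot,x),G^{x'}_\lambda(\cdot,x))\leq\|G^{x'}_\lambda(\cdot,x)\|_\lambda^2=G^{x'}_\lambda(x,x)$ gives
\[
G^{x'}_\lambda(x,x)^2 \;\leq\; R(x,x')\,\Ecal(G^{x'}_\lambda(\cdot,x),G^{x'}_\lambda(\cdot,x)) \;\leq\; R(x,x')\,G^{x'}_\lambda(x,x),
\]
so $G^{x'}_\lambda(x,x)\leq R(x,x')$, and likewise for $G^{x}_\lambda(x',x')$. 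Combining with the two one-sided inequalities yields $|\rho^b_\lambda(x',y)-\rho^b_\lambda(x,y)|\leq R(x,x')$, which implies the claim with a factor of $2$ to spare. The main obstacle I anticipate is executing the Markovian truncation / maximum-principle assertions cleanly under the mixed Neumann--Dirichlet conditions encoded by $b$ and disposing of the edge cases where $x$ or $x'$ lies in $F^0\setminus b$; once the truncation lemma is recorded, the remainder is a short sequence of Cauchy--Schwarz and reproducing-kernel identities.
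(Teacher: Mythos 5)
Your argument is correct and genuinely more self-contained than the paper's. The paper's proof is a one-line deferral: the cases $b\in\{N,D\}$ are handled in \cite[Proposition 4.5]{Hambly2016}, and the remaining boundary conditions are said to follow analogously via the Green function with boundary $F^0\setminus b$. You instead supply a complete argument in place: decompose $\rho^b_\lambda(\cdot,y)$ along the $\langle\cdot,\cdot\rangle_\lambda$-orthogonal splitting of $\Dcal_b$ into $\Kscr=\{f:f(x')=0\}$ and its one-dimensional complement $\mathrm{span}\{\rho^b_\lambda(x',\cdot)\}$, prove two maximum principles (nonnegativity and boundedness by the diagonal) via Markovian truncation plus Cauchy--Schwarz, and then bound the $\Kscr$-reproducing-kernel diagonal $G^{x'}_\lambda(x,x)$ by $R(x,x')$ via Remark~\ref{domcl}. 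I checked the orthogonal decomposition, the identity $\langle f,\tilde f\rangle_\lambda=\tilde f(\text{rep.\ pt.})=\|f\|_\lambda^2$, the stability of the truncation under the mixed boundary constraint, the symmetry of $G^{x'}_\lambda$, and the final chain of inequalities; all are sound. Two advantages of your route: it treats all $b\in 2^{F^0}$ uniformly with no case split, and it actually yields the sharper constant $1$ in place of $2$. Morally this is the same Green-function idea the paper gestures at, but phrased intrinsically in the $\lambda$-perturbed Hilbert space without passing to the $\lambda=0$ kernel.

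One small inaccuracy: the parenthetical claim that the edge case $x'\in F^0\setminus b$ (where $\rho^b_\lambda(x',\cdot)\equiv 0$) is \emph{``handled directly using Remark~\ref{domcl}''} is not quite right. Applying Remark~\ref{domcl} to $f=\rho^b_\lambda(\cdot,y)$ between $x$ and $x'$ gives only $\rho^b_\lambda(x,y)\leq \bigl(R(x,x')\,\rho^b_\lambda(y,y)\bigr)^{1/2}$, a H\"older-$\tfrac12$ bound, not the Lipschitz bound needed. The correct fix, entirely within your own framework, is to observe that in this case $\Kscr=\Dcal_b$ (every $f\in\Dcal_b$ already vanishes at $x'$), so $G^{x'}_\lambda=\rho^b_\lambda$ and the decomposition degenerates with no division by zero. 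Then apply Remark~\ref{domcl} to $\rho^b_\lambda(\cdot,x)\in\Dcal_b$, which vanishes at $x'$, to get $\rho^b_\lambda(x,x)\leq R(x,x')$, and use your first maximum principle (with $x$ as the reproducing point) to get $\rho^b_\lambda(x,y)\leq\rho^b_\lambda(x,x)$; combined with $\rho^b_\lambda(x',y)=0$ and nonnegativity this gives $|\rho^b_\lambda(x,y)-\rho^b_\lambda(x',y)|\leq R(x,x')$. So the edge case is absorbed by the machinery you already built; it just requires the maximum principle in addition to Remark~\ref{domcl}, not a separate ``direct'' appeal to the remark alone.
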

\begin{proof}
The cases $b \in \{N,D\}$ are treated in \cite[Proposition 4.5]{Hambly2016}. All other cases can be proven in the same way as the $b = D$ case, using the Green function with boundary $F^0 \setminus b$.
\end{proof}
We will henceforth be using the resolvent density exclusively in the case $\lambda = 1$, so let $c_\rho = c_\rho(1)$.

\subsection{A priori estimates}

The regularity of the resolvent density leads us to the first a priori estimates on the behaviour of \eqref{SPDEmild}, which make clear the significance of Hypotheses \ref{hyp1} and \ref{hyp2}:
\begin{prop}\label{apriori}
Let $t \in [0,T]$ and $q \geq 1$.
\begin{enumerate}
\item For all $b \in 2^{F^0}$ and $h \in \Hcal$ we have that
\begin{equation*}
\begin{split}
\Ebb &\left[ \left| \left\langle \int_0^t S^b_{t-s} \beta_s ds, h \right\rangle_\mu \right|^{2q} \right] \\
&\leq \frac{e^{2qt}}{2^q} \Ebb\left[ \left( \int_0^t \Vert\beta_s\Vert_\mu^2 ds \right)^q \right] \left( \int_F \int_F \rho^b_1(x,y)h(x)h(y)\mu(dx)\mu(dy) \right)^q.
\end{split}
\end{equation*}
\item Moreover there exists a constant $C_q > 0$ depending only on $q$ such that if $\sup_{s \in [0,T]} \Ebb\left[ \Vert \sigma_s \Vert^{2q} \right] < \infty$ then for all $b \in 2^{F^0}$ and $h \in \Hcal$ we have that
\begin{equation*}
\begin{split}
\Ebb &\left[ \left| \left\langle \int_0^t S^b_{t-s} \sigma_s dW(s), h \right\rangle_\mu \right|^{2q} \right]\\
&\leq C_q e^{2qt} \sup_{s \in [0,t]} \Ebb\left[ \Vert \sigma_s \Vert^{2q} \right] \left( \int_F \int_F \rho^b_1(x,y)h(x)h(y)\mu(dx)\mu(dy) \right)^q.
\end{split}
\end{equation*}
\item Finally, for the same constant $C_q > 0$ the following holds: suppose there exists a jointly measurable function $\tilde{\sigma}: \Omega \times [0,T] \times F \to \Rbb$ such that for each $t \in [0,T]$,
\begin{equation*}
\sigma_t = \Mcal_{\tilde{\sigma}(t,\cdot)}
\end{equation*}
almost surely. Then if $\sup_{(s,x) \in [0,T] \times F} \Ebb\left[ \left| \tilde{\sigma}(s,x) \right|^{2q} \right] < \infty$ then for all $b \in 2^{F^0}$ and $h \in \Hcal$ we have that
\begin{equation*}
\begin{split}
\Ebb &\left[ \left| \left\langle \int_0^t S^b_{t-s} \sigma_s dW(s), h \right\rangle_\mu \right|^{2q} \right]\\
&\leq C_q e^{2qt} \sup_{(s,x) \in [0,t] \times F} \Ebb\left[ \left| \tilde{\sigma}(s,x) \right|^{2q} \right] \left( \int_F \int_F \rho^b_1(x,y)h(x)h(y)\mu(dx)\mu(dy) \right)^q.
\end{split}
\end{equation*}
\end{enumerate}
\end{prop}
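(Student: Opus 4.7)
The plan is to reduce all three bounds to the single deterministic core estimate
\[
\int_0^t \|S^b_u h\|_\mu^2\,du \;\leq\; \frac{e^{2t}}{2}\int_F\int_F \rho^b_1(x,y) h(x) h(y)\,\mu(dx)\,\mu(dy),
\]
after which (1) follows from Cauchy--Schwarz and (2), (3) from the Burkholder--Davis--Gundy inequality. To prove this core estimate I would use self-adjointness of $S^b_u$ together with the semigroup property to write $\|S^b_u h\|_\mu^2 = \langle h, S^b_{2u} h\rangle_\mu$, multiply by $1 \leq e^{2t}e^{-2u}$ (valid on $[0,t]$), extend the integration from $[0,t]$ to $[0,\infty)$, substitute $s = 2u$, and finally identify $\int_0^\infty e^{-s}\langle h, S^b_s h\rangle_\mu\,ds$ with the claimed double integral via the resolvent kernel property of $\rho^b_1$ at $\lambda = 1$.

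For part (1) I would use self-adjointness of $S^b_{t-s}$ to rewrite the scalar product as $\int_0^t \langle \beta_s, S^b_{t-s} h\rangle_\mu\,ds$, then apply Cauchy--Schwarz first in $\Hcal$ and then in the time variable, raise to the $q$-th power, take expectations, and insert the core estimate applied with $u = t - s$. The factor $2^{-q}$ in the statement comes exactly from the single factor $\tfrac{1}{2}$ in the core bound raised to the $q$-th power.

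For parts (2) and (3) the key observation is that, for each fixed $t$, the real-valued process
\[
s \;\mapsto\; \left\langle \int_0^s S^b_{t-r}\sigma_r\,dW(r),\,h\right\rangle_\mu \;=\; \int_0^s \langle \sigma_r^* S^b_{t-r} h,\,dW(r)\rangle_\mu
\]
is a continuous martingale with quadratic variation $\int_0^s \|\sigma_r^* S^b_{t-r} h\|_\mu^2\,dr$, so the one-dimensional BDG inequality bounds its $2q$-th moment by $C_q\,\Ebb\bigl[\bigl(\int_0^t \|\sigma_r^* S^b_{t-r} h\|_\mu^2\,dr\bigr)^q\bigr]$. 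In case (2) I would then use $\|\sigma_r^*\| = \|\sigma_r\|$; in case (3) the self-adjointness of multiplication operators allows the integrand to be written pointwise as $\int_F \tilde\sigma(r,x)^2 (S^b_{t-r}h)(x)^2\,\mu(dx)$. The main obstacle is that our hypotheses control $\sup_r \Ebb[\|\sigma_r\|^{2q}]$ rather than $\Ebb[\sup_r \|\sigma_r\|^{2q}]$, so a sup cannot simply be pulled out of the random integral. I would handle this with Minkowski's integral inequality in $L^q(\Omega)$ against the deterministic measure $\|S^b_{t-r}h\|_\mu^2\,dr$ (respectively $(S^b_{t-r}h)(x)^2\,\mu(dx)\,dr$ in the Walsh case), which moves the $L^q(\Omega)$-norm inside the deterministic integral; the integrand is then bounded pointwise by the supremum appearing in the hypothesis, and a final application of the core estimate yields (2) and (3) with the BDG constant $C_q$.
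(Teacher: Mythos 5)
Your proposal is correct and follows essentially the same line as the paper's own proof: the same core estimate $\int_0^t \|S^b_u h\|_\mu^2\,du \leq \tfrac{e^{2t}}{2}\int_F\int_F\rho^b_1(x,y)h(x)h(y)\,\mu(dx)\mu(dy)$ derived via self-adjointness, the semigroup property, and the resolvent kernel identity; Cauchy--Schwarz (in $\Hcal$ and then in time) for part (1); and for parts (2) and (3) the reduction to a scalar martingale, BDG, and the Minkowski integral inequality in $L^q(\Omega)$ against the deterministic measure $\|S^b_{t-r}h\|_\mu^2\,dr$ (respectively $(S^b_{t-r}h)(x)^2\,\mu(dx)\,dr$ in the multiplication-operator case). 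The only step you leave implicit that the paper spells out is the preliminary verification that the scalar stochastic integral is indeed a square-integrable martingale under the stated hypotheses, which is needed to license BDG; this is straightforward and does not affect the substance of the argument.
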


\begin{proof}
First of all we adapt the proof of \cite[Lemma 4.6]{Hambly2016}; we see that if $b \in 2^{F^0}$, $t \geq 0$ and $h \in \Hcal$ then
\begin{eqnarray}
\int_0^t \Vert S^b_s h \Vert_\mu^2 ds &\leq & e^{2t}\int_0^\infty e^{-2s} \Vert S^b_s h \Vert_\mu^2 ds \nonumber \\
&=& e^{2t}\left\langle \int_0^\infty e^{-2s} S^b_{2s}h ds , h \right\rangle_\mu \nonumber \\
&=& \frac{e^{2t}}{2} \left\langle \int_F \rho^b_1(\cdot,y) h(y) \mu(dy) , h \right\rangle_\mu \nonumber \\
&= &\frac{e^{2t}}{2} \int_F \int_F \rho^b_1(x,y)h(x)h(y)\mu(dx)\mu(dy). \label{resolvint}
\end{eqnarray}
As a side-effect, this shows that the double integral on the right-hand side is non-negative.
\begin{enumerate}
\item Using Cauchy-Schwarz and H\"{o}lder's inequalities we find that
\begin{equation*}
\begin{split}
\Ebb\left[ \left| \left\langle \int_0^t S^b_{t-s} \beta_s ds, h \right\rangle_\mu \right|^{2q} \right] &= \Ebb\left[ \left| \int_0^t \left\langle \beta_s, S^b_{t-s} h \right\rangle_\mu ds \right|^{2q} \right]\\
&\leq \Ebb\left[ \left( \int_0^t \Vert\beta_s\Vert_\mu \Vert S^b_{t-s} h \Vert_\mu ds \right)^{2q} \right]\\
&\leq \Ebb\left[ \left( \int_0^t \Vert\beta_s\Vert_\mu^2 ds \right)^q \right] \left( \int_0^t \Vert S^b_s h \Vert_\mu^2 ds \right)^q \\
\end{split}
\end{equation*}
and then \eqref{resolvint} implies the first required estimate.
\item Let $h^*$ be the linear functional on $\Hcal$ given by $f \mapsto \langle f,h \rangle_\mu$. Then
\begin{equation*}
\Ebb\left[ \left| \left\langle \int_0^t S^b_{t-s} \sigma_s dW(s), h \right\rangle_\mu \right|^{2q} \right] = \Ebb\left[ \left| \int_0^t h^* S^b_{t-s} \sigma_s dW(s) \right|^{2q} \right].
\end{equation*}
Let $\sigma^*_s$ be the adjoint of $\sigma_s$ for each $s \in [0,T]$. We see that 
\begin{equation*}
\begin{split}
\Ebb \left[ \int_0^t \Vert h^* S^b_{t-s} \sigma_s \Vert_{\Lcal_2(\Hcal,\Rbb)}^2 ds \right] &= \Ebb \left[ \int_0^t \Vert \sigma^*_s S^b_{t-s }h \Vert_\mu^2 ds \right]\\ 
&\leq \Vert h \Vert_\mu^2 \int_0^t \Ebb \left[ \Vert \sigma_s\Vert^2 \right] ds\\
&< \infty
\end{split}
\end{equation*}
which implies that $t' \mapsto \int_0^{t'} h^* S^b_{t-s} \sigma_s dW(s)$ is a real-valued square-integrable martingale for $t' \in [0,t]$. Then by the Burkholder-Davis-Gundy inequality there exists a constant $C_q' > 0$ depending only on $q$ such that
\begin{equation*}
\begin{split}
\Ebb\left[ \left| \int_0^t h^* S^b_{t-s} \sigma_s dW(s) \right|^{2q} \right]^\frac{1}{q} &\leq C_q' \Ebb\left[ \left(\int_0^t \Vert h^* S^b_{t-s} \sigma_s \Vert_{\Lcal_2(\Hcal,\Rbb)}^2 ds \right)^q \right]^\frac{1}{q}\\
&= C_q' \Ebb\left[ \left(\int_0^t \Vert \sigma^*_s S^b_{t-s}h \Vert_\mu^2 ds \right)^q \right]^\frac{1}{q}\\
&\leq C_q' \Ebb\left[ \left(\int_0^t \Vert \sigma_s \Vert^2 \Vert S^b_{t-s}h \Vert_\mu^2 ds \right)^q \right]^\frac{1}{q}\\
&\leq C_q' \int_0^t \Ebb\left[ \Vert \sigma_s \Vert^{2q} \right]^\frac{1}{q} \Vert S^b_{t-s}h \Vert_\mu^2 ds \\
&\leq C_q' \sup_{s \in [0,t]} \Ebb\left[ \Vert \sigma_s \Vert^{2q} \right]^\frac{1}{q} \int_0^t \Vert S^b_s h \Vert_\mu^2 ds ,\\
\end{split}
\end{equation*}
where we have used the Minkowski integral inequality. Take powers of $q$ on both sides, then \eqref{resolvint} implies the second estimate.
\item Now we assume that there exists a jointly measurable function $\tilde{\sigma}: \Omega \times [0,T] \times F \to \Rbb$ such that for each $t \in [0,T]$,
\begin{equation*}
\sigma_t = \Mcal_{\tilde{\sigma}(t,\cdot)}
\end{equation*}
almost surely. We note that (bounded) multiplication operators are self-adjoint (\cite[Example VI.1.5]{Conway1990}), so we have that
\begin{equation*}
\begin{split}
\Ebb \left[\int_0^t \Vert h^* S^b_{t-s} \sigma_s \Vert_{\Lcal_2(\Hcal,\Rbb)}^2 ds \right] &= \int_0^t \Ebb \left[ \left\Vert \sigma^*_s S^b_{t-s}h \right\Vert_\mu^2 \right] ds\\
&= \int_0^t \Ebb \left[ \left\Vert \Mcal_{\tilde{\sigma}(s,\cdot)} S^b_{t-s}h \right\Vert_\mu^2 \right] ds\\
&= \int_0^t \Ebb \left[ \int_F \tilde{\sigma}(s,x)^2 (S^b_{t-s}h)(x)^2 \mu(dx) \right] ds\\
&\leq \sup_{(s,x) \in [0,T] \times F} \Ebb \left[ \tilde{\sigma}(s,x)^2 \right] \int_0^t \int_F (S^b_s h)(x)^2 \mu(dx) ds\\
&\leq t \Vert h \Vert_\mu^2 \sup_{(s,x) \in [0,T] \times F} \Ebb \left[ \tilde{\sigma}(s,x)^2 \right]\\
&< \infty,
\end{split}
\end{equation*}
which as before implies that $t' \mapsto \int_0^{t'} h^* S^b_{t-s} \sigma_s dW(s)$ is a real-valued square-integrable martingale for $t' \in [0,t]$. As in (2) we then have that
\begin{equation*}
\Ebb\left[ \left| \int_0^t h^* S^b_{t-s} \sigma_s dW(s) \right|^{2q} \right]^\frac{1}{q} \leq C_q' \Ebb\left[ \left(\int_0^t \Vert \sigma^*_s S^b_{t-s}h \Vert_\mu^2 ds \right)^q \right]^\frac{1}{q}
\end{equation*}
with the same constant $C_q'$. We apply the Minkowski integral inequality twice:
\begin{equation*}
\begin{split}
\Ebb\left[ \left(\int_0^t \Vert \sigma^*_s S^b_{t-s}h \Vert_\mu^2 ds \right)^q \right]^\frac{1}{q} &\leq \int_0^t \Ebb\left[ \left\Vert \sigma^*_s S^b_{t-s}h \right\Vert_\mu^{2q} \right]^\frac{1}{q} ds\\
&= \int_0^t \Ebb\left[ \left\Vert \Mcal_{\tilde{\sigma}(s,\cdot)} S^b_{t-s}h \right\Vert_\mu^{2q} \right]^\frac{1}{q} ds\\
&= \int_0^t \Ebb\left[ \left( \int_F \tilde{\sigma}(s,x)^2 (S^b_{t-s}h)(x)^2 \mu(dx) \right)^q \right]^\frac{1}{q} ds\\
&\leq \int_0^t \int_F \Ebb\left[ \left| \tilde{\sigma}(s,x) \right|^{2q} \right]^\frac{1}{q} (S^b_{t-s}h)(x)^2 \mu(dx) ds\\
&\leq \sup_{(s,x) \in [0,t] \times F} \Ebb\left[ \left| \tilde{\sigma}(s,x) \right|^{2q} \right]^\frac{1}{q} \int_0^t \Vert S^b_s h \Vert_\mu^2 ds.\\
\end{split}
\end{equation*}
Now we take powers of $q$ on both sides and use \eqref{resolvint} for the required result.
\end{enumerate}
\end{proof}

\subsection{Partitions and delta-approximants}
In this section we define sequences of functions in $\Hcal$ that approximate the delta functional on $F$ in a controllable way. We then use this sequence to ``evaluate'' $U(t)$ given in \eqref{SPDEmild} at points $x \in F$. Much of the material in this section follows the method and results of \cite[Section 3.1]{Hambly2016}. 
\begin{defn}
Let $m,n \geq 0$. If $w \in \Wbb_m$ and $v \in \Wbb_n$ then we say that $w$ is a \textit{prefix} of $v$ if $m \leq n$ and $w_i = v_i$ for all $i = 1,\ldots,m$. It is a \textit{proper prefix} if in addition we have that $m < n$. These definitions also make sense if $v \in \Wbb$.
\end{defn}
We define a sequence of partitions $(\Lambda_n)_{n=0}^\infty$ on $\Wbb_*$ by
\begin{equation*}
\Lambda_n = \{ w: w = w_1 \ldots w_m \in \Wbb_*,\ r_{w_1\ldots w_{m-1}} > 2^{-n} \geq r_w \}
\end{equation*}
for $n \geq 1$, and we let $\Lambda_0 = \Wbb_0$, which is also a partition (for the definition of a partition see \cite[Definitions 1.3.9 and 1.5.6]{Kigami2001}) or \cite[Definitions 3.2 and 3.3]{Hambly2016}). These possess the following properties:
\begin{lem}\label{partitionfacts}
For each $n \geq 0$:
\begin{enumerate}
\item $|\Lambda_n| < \infty$ and $\bigcup_{w \in \Lambda_n} F_w = F$,
\item If $w \in \Lambda_n$ then there exists a subset $\Lambda' \subseteq \Lambda_{n+1}$ such that $F_w = \bigcup_{v \in \Lambda'} F_v$,
\item If $w,v \in \Lambda_n$ are distinct then $|F_w \cap F_v| < \infty$,
\item If $w \in \Lambda_n$ then $r_{\min}^{d_H} 2^{-d_H n} < \mu(F_w) \leq 2^{-d_H n}$.
\end{enumerate}
In addition, if $w \in \Wbb_*$ then there exists some $n \geq 0$ and $\Lambda' \subseteq \Lambda_n$ such that $F_w = \bigcup_{v \in \Lambda'} F_v$.
\end{lem}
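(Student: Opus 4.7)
My plan is to exploit the interpretation of $\Lambda_n$ as a ``stopping time'' partition of $\Wbb_*$: each $w \in \Lambda_n$ is the minimal prefix of an extending infinite word along which the contraction product first drops to at most $2^{-n}$. For every $\omega \in \Wbb$, the sequence $m \mapsto r_{\omega_1 \ldots \omega_m}$ is strictly decreasing and tends to $0$, so $\omega$ has a unique prefix in $\Lambda_n$. This single observation is the main driver of the proof.

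For (1), finiteness of $\Lambda_n$ follows from the length bound $r_{\max}^{|w|-1} \geq r_{w_1 \ldots w_{|w|-1}} > 2^{-n}$ combined with $|I| < \infty$. The covering $\bigcup_{w \in \Lambda_n} F_w = F$ then follows from the surjectivity of $\pi$ together with the existence of a $\Lambda_n$-prefix for each $\omega \in \Wbb$. For (4), I would invoke the standard fact that the self-similar measure with weights $r_i^{d_H}$ satisfies $\mu(F_w) = r_w^{d_H}$ (cell overlaps are finite, hence $\mu$-null), giving the upper bound directly from $r_w \leq 2^{-n}$ and the lower bound from $r_w = r_{w_1 \ldots w_{|w|-1}} \cdot r_{w_{|w|}} > r_{\min} \cdot 2^{-n}$.

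For (2), I would set $\Lambda' := \{v \in \Lambda_{n+1} : w \text{ is a prefix of } v\}$. The inclusion $\bigcup_{v \in \Lambda'} F_v \subseteq F_w$ is immediate from $\psi_{wu} = \psi_w \circ \psi_u$. For the reverse inclusion, given $x \in F_w$ I would write $x = \psi_w(\pi(\omega'))$ for some $\omega' \in \Wbb$ and let $v$ be the unique $\Lambda_{n+1}$-prefix of $w\omega'$. Both $v$ and $w$ are prefixes of $w\omega'$; if $|v| < |w|$ then $v$ is also a prefix of $w_1 \ldots w_{|w|-1}$, so $r_v \geq r_{w_1 \ldots w_{|w|-1}} > 2^{-n} > 2^{-(n+1)}$, contradicting $r_v \leq 2^{-(n+1)}$. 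Hence $w$ is a prefix of $v$ and $x \in F_v$. For (3), a symmetric inequality shows that distinct $w,v \in \Lambda_n$ are prefix-incomparable: if $w$ were a strict prefix of $v$, then $v_1 \ldots v_{|v|-1}$ would extend $w$, giving $r_{v_1 \ldots v_{|v|-1}} \leq r_w \leq 2^{-n}$ and contradicting $v \in \Lambda_n$. Standard p.c.f.\ theory then yields $F_w \cap F_v \subseteq \psi_w(F^0) \cap \psi_v(F^0)$, a finite set. For the final assertion, I would choose $n$ large enough that $2^{-n} < r_w$ and repeat the (2) argument with $\Lambda' := \{v \in \Lambda_n : w \text{ is a prefix of } v\}$, the condition $2^{-n} < r_w$ ensuring that no $\Lambda_n$-prefix of an infinite word through $w$ can be shorter than $w$.

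The main subtlety running through (2) and the final assertion is the prefix-length case analysis, verifying in each case that the relevant refinement prefix of $w\omega'$ cannot be shorter than $w$. Once these inequalities are in hand, the remainder amounts to routine bookkeeping with the defining stopping-time conditions and a single appeal to the standard p.c.f.\ intersection property.
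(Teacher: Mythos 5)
Your proof is correct and follows essentially the same route as the paper's: the paper proves (1)--(3) by citing the definition of a partition and \cite[Lemma 3.4]{Hambly2016} and \cite[Proposition 1.3.5(2)]{Kigami2001}, while you unpack these into first-principles arguments using the unique-prefix (stopping) property of $\Lambda_n$, and your prefix-length case analysis for (2) and the final claim mirrors the paper's argument that no element of $\Lambda_n$ can be a proper prefix of $w$. The only cosmetic difference is that you choose $n$ with $2^{-n} < r_w$ for the final claim whereas the paper uses the slightly weaker $2^{-n} < r_{w_1\ldots w_{m-1}}$; both work.
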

\begin{proof}
\begin{enumerate}
\item Directly from definition of a partition.
\item By \cite[Lemma 3.4]{Hambly2016}.
\item By the definition of a partition combined with \cite[Proposition 1.3.5(2)]{Kigami2001}. If $w,v \in \Lambda_n$ are distinct then $F_w \cap F_v = \psi_w(F^0) \cap \psi_v(F^0)$ which is finite since $F$ is post-critically finite.
\item By definition of $\mu$ and definition of $\Lambda_n$.
\end{enumerate}
Finally we prove the last claim. The empty word is in $\Lambda_0$ so we can ignore that case. Suppose $w = w_1 \ldots w_m \in \Wbb_*$. Pick $n$ such that $r_{w_1\ldots w_{m-1}} > 2^{-n}$, then no element of $\Lambda_n$ can be a proper prefix of $w$. By the definition of the map $\pi$, for any $x \in F_w$ there exists a $w_x \in \Wbb$ which has $w$ as a prefix such that $\pi(w') = x$. By the definition of a partition there must exist a unique $v_x \in \Lambda_n$ such that $v_x$ is a prefix of $w_x$. Since $v_x$ is not a proper prefix of $w$, $w$ must be a prefix of $v_x$. This implies that $x = \pi(w_x) \in F_{v_x} \subseteq F_w$. Let $\Lambda' = \{ v \in \Lambda_n: v = v_x \text{ for some } x \in F_w \}$ be the set of $v$'s that can be obtained in this way. It follows that $F_w = \bigcup_{v \in \Lambda'} F_v$.
\end{proof}
\begin{defn}\label{nhood}
Let $n \geq 0$ and $w \in \Wbb_n$. For $x \in F$ let
\begin{equation*}
D^0_n(x) = \bigcup \{ F_w: w \in \Lambda_n,\ F_w \ni x \}
\end{equation*}
be the \textit{$n$-neighbourhood} of $x$.
\end{defn}
The following is an important structural property of $n$-neighbourhoods:
\begin{lem}\label{nhoodestim}
There exists a constant $c_2 > 0$ such that if $x,y \in F$ and $y \in D^0_n(x)$ then $R(x,y) \leq c_2 2^{-n}$.
\end{lem}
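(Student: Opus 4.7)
The plan is to reduce the claim to the standard scaling property of the resistance metric under the maps $\psi_w$. Specifically, I will argue that for every $w \in \Wbb_*$ one has
\[
R(\psi_w(x'),\psi_w(y')) \leq r_w R(x',y') \quad \text{for all } x',y' \in F,
\]
which follows from the self-similar decomposition $\Ecal(g) = \sum_j r_j^{-1} \Ecal(g\circ\psi_j)$ together with the variational characterisation of the resistance metric: since $\Ecal(g\circ\psi_i) \leq r_i \Ecal(g)$ for all $g \in \Dcal$, evaluating $|g(\psi_i(x'))-g(\psi_i(y'))|^2 = |(g\circ\psi_i)(x')-(g\circ\psi_i)(y')|^2 \leq R(x',y')\Ecal(g\circ\psi_i) \leq r_i R(x',y') \Ecal(g)$ and taking the supremum over $g$ gives the inequality for $w = i$, and iteration gives the general case.

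Next, I would observe that $(F,R)$ is compact (since by \cite[Theorem 3.3.4]{Kigami2001} $R$ is compatible with the original topology of $F$, and $F$ is compact in that topology), so
\[
c_2 := \diam_R(F) < \infty.
\]
Combining the two observations yields $\diam_R(F_w) = \diam_R(\psi_w(F)) \leq r_w \cdot c_2$ for every $w \in \Wbb_*$.

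To conclude, suppose $y \in D^0_n(x)$. By the definition in Definition~\ref{nhood}, there exists $w \in \Lambda_n$ with $x \in F_w$ and $y \in F_w$ (the same $w$, since $y$ lies in the union of cells containing $x$). By the defining property of $\Lambda_n$, $r_w \leq 2^{-n}$, and therefore
\[
R(x,y) \leq \diam_R(F_w) \leq r_w \cdot c_2 \leq c_2 \cdot 2^{-n}.
\]
This gives the desired bound with the constant $c_2 = \diam_R(F)$.

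I do not anticipate a substantive obstacle here: everything reduces to the scaling of $R$ under $\psi_i$ and the finiteness of $\diam_R(F)$, both of which are standard facts from Kigami's framework. The only care needed is the observation that $y \in D^0_n(x)$ means both $x$ and $y$ lie in a common $n$-cell, which is immediate from the definition but worth stating explicitly since it is what allows the diameter bound to apply.
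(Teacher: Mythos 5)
Your proof is correct, and it is a self-contained argument where the paper simply cites \cite[Proposition 3.12]{Hambly2016}. Your route is the one most people would reconstruct from scratch: the self-similar decomposition of the Dirichlet form gives $\Ecal(g\circ\psi_i) \leq r_i\,\Ecal(g)$, which via the variational characterisation of $R$ yields $R(\psi_i(x'),\psi_i(y')) \leq r_i R(x',y')$; iteration then controls $\diam_R(F_w)$ by $r_w\,\diam_R(F)$; and finally the definition of $\Lambda_n$ forces $r_w \leq 2^{-n}$ while the definition of $D^0_n(x)$ forces $x$ and $y$ into a common $n$-cell $F_w$. The only implicit step you might flag is that $g\circ\psi_i \in \Dcal$ whenever $g\in\Dcal$, which is what makes the variational bound applicable to $g\circ\psi_i$ in the first place; this is indeed part of the self-similarity of $(\Ecal,\Dcal)$ on a p.c.f.\ fractal with regular harmonic structure. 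The cited Proposition 3.12 in \cite{Hambly2016} encapsulates exactly this scaling behaviour of the resistance metric on cells, so the two arguments are mathematically the same content; yours has the virtue of being written out rather than deferred to a reference, while the paper's citation keeps the present exposition short.
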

\begin{proof}
Simple corollary of \cite[Proposition 3.12]{Hambly2016}.
\end{proof}
We now define our family of functions approximating the delta functional, which is identical to those given in \cite[Definition 4.7]{Hambly2016}.
\begin{defn}
For $x \in F$ and $n \geq 0$, define
\begin{equation*}
f^x_n = \mu(D^0_n(x))^{-1} \1bb_{D^0_n(x)}.
\end{equation*}
\end{defn}
In particular we note that by Lemma \ref{partitionfacts}(4), $\Vert f^x_n \Vert_\mu^2 = \mu(D^0_n(x))^{-1} \leq r_{\min}^{-d_H} 
2^{d_H n}$, and that by Lemma \ref{nhoodestim} we have that $\lim_{n \to \infty} \langle f^x_n, g \rangle_\mu = g(x)$ for any continuous 
$g \in \Hcal$. We are now ready to prove the main result of this section.

\begin{thm}\label{randomfieldexist}
Assume either Hypothesis \ref{hyp1} or Hypothesis \ref{hyp2}. Then for each $t \in [0,T]$ and $x \in F$, the sequence 
$(\langle U(t), f^x_n \rangle_\mu)_{n \geq 0}$ is Cauchy in $L^{2p}(\Omega)$ as $n \to \infty$. Define $\tilde u(t,x)$ to be the $L^{2p}(\Omega)$-limit of $(\langle U(t), f^x_n \rangle_\mu)_{n \geq 0}$, then there exists a constant $c_3$ such that for all $(t,x) \in [0,T] \times F$ and all $n \geq 0$,
\begin{equation*}
\Ebb \left[ \left| \langle U(t), f^x_n \rangle_\mu - \tilde u(t,x) \right|^{2p} \right] \leq c_3 2^{-np}.
\end{equation*}
\end{thm}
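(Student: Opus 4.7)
The strategy is to apply the a priori moment estimates of Proposition \ref{apriori} with test function $h = f^x_n - f^x_m$ for $m \geq n$. Splitting $U(t)$ into its drift and stochastic integral parts, parts (1) and (2)/(3) of that proposition reduce the $L^{2p}$-control of $\langle U(t), f^x_n - f^x_m\rangle_\mu$ to a deterministic bound on the quadratic form
\[ D_{nm} := \int_F\int_F \rho^b_1(y,z)(f^x_n - f^x_m)(y)(f^x_n - f^x_m)(z)\,\mu(dy)\mu(dz), \]
with the $\sigma$-contribution estimated via part (2) of Proposition \ref{apriori} under Hypothesis \ref{hyp1} and via part (3) under Hypothesis \ref{hyp2}; in either case the relevant $2p$-moment of the diffusion coefficient is uniformly bounded by $K$.

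Next, I decompose $D_{nm} = I_{nn} - 2 I_{nm} + I_{mm}$ where $I_{kl} := \int\int \rho^b_1(y,z) f^x_k(y) f^x_l(z)\,\mu(dy)\mu(dz)$ is a weighted average of $\rho^b_1$ over $D^0_k(x) \times D^0_l(x)$. By Proposition \ref{resolvLip} and the triangle inequality, $|\rho^b_1(y,z) - \rho^b_1(x,x)| \leq 2(R(y,x) + R(z,x))$, and Lemma \ref{nhoodestim} gives $R(y,x) \leq c_2 2^{-k}$ whenever $y \in D^0_k(x)$. These bounds are uniform in $b$ and $x$, so $|I_{kl} - \rho^b_1(x,x)| \leq 2c_2(2^{-k} + 2^{-l})$. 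Taking $(k,l) \in \{(n,n),(n,m),(m,m)\}$ with $m \geq n$, the common $\rho^b_1(x,x)$ terms cancel in $D_{nm}$ and we obtain $|D_{nm}| \leq C \cdot 2^{-n}$ for a constant $C$ independent of $b$, $x$, $n$ and $m$.

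Plugging this into Proposition \ref{apriori} and raising to the power $p$, then combining the drift and stochastic contributions via the $L^{2p}(\Omega)$ triangle inequality, yields
\[ \Ebb\left[\left|\langle U(t), f^x_n - f^x_m\rangle_\mu\right|^{2p}\right] \leq c_3 \cdot 2^{-np} \]
uniformly in $m \geq n$ and in $(t,x) \in [0,T] \times F$. This proves the Cauchy property in $L^{2p}(\Omega)$, and since $L^{2p}(\Omega)$ is complete, the limit $\tilde u(t,x)$ is well defined. Passing to the limit $m \to \infty$ in the $L^{2p}$-norm gives the claimed bound on $\Ebb[|\langle U(t),f^x_n\rangle_\mu - \tilde u(t,x)|^{2p}]$.

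The only potential subtlety is ensuring that all the intervening constants---in Proposition \ref{resolvLip}, Lemma \ref{nhoodestim}, and the three parts of Proposition \ref{apriori}---are independent of $b$, $t$ and $x$; this uniformity is already built into those statements. The deeper conceptual point is that the Lipschitz regularity of the resolvent density, which ultimately reflects the nontrivial heat-kernel regularity on the fractal, converts a probabilistic question about the $L^{2p}$-convergence of smoothed evaluations of $U(t)$ into a clean geometric decay estimate on $D_{nm}$.
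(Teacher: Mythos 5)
Your proof is correct and follows essentially the same route as the paper's: apply Proposition \ref{apriori} with test function $f^x_m - f^x_n$ to reduce to the resolvent quadratic form, bound that form using Proposition \ref{resolvLip} and Lemma \ref{nhoodestim}, and then conclude the Cauchy property and pass to the limit. The only cosmetic difference is that you spell out the $I_{nn} - 2I_{nm} + I_{mm}$ decomposition and the cancellation of the $\rho^b_1(x,x)$ terms explicitly, where the paper simply defers to the analogous computation in \cite[Theorem 4.8]{Hambly2016}.
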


\begin{proof}
Recall that $p \geq 1$. By \eqref{SPDEmild} and Proposition \ref{apriori}, for $m,n \geq 0$ we have that
\begin{equation*}
\begin{split}
&\Ebb \left[ \left| \langle U(t), f^x_m \rangle_\mu - \langle U(t), f^x_n \rangle_\mu \right|^{2p} \right]\\
&= \Ebb \left[ \left| \langle U(t), f^x_m - f^x_n \rangle_\mu \right|^{2p} \right]\\
&\leq 2^{2p-1} \Ebb\left[ \left| \left\langle \int_0^t S^b_{t-s} \beta_s ds, f^x_m - f^x_n \right\rangle_\mu \right|^{2p} + \left| \left\langle \int_0^t S^b_{t-s} \sigma_s dW(s), f^x_m - f^x_n \right\rangle_\mu \right|^{2p} \right]\\
&\leq c_3' \left( \int_F \int_F \rho^b_1(z,y)(f^x_m(z) - f^x_n(z)) (f^x_m(y) - f^x_n(y))\mu(dz)\mu(dy) \right)^p,
\end{split}
\end{equation*}
where $c_3' = e^{2pT}2^{2p-1} (2^{-p} + C_p) K$. By Lemma \ref{resolvLip} and Lemma \ref{nhoodestim} and the definition of $f^x_n$, we see 
that, as in the proof of \cite[Theorem 4.8]{Hambly2016}, 
\begin{equation*}
\int_F \int_F \rho^b_1(z,y)(f^x_m(z) - f^x_n(z)) (f^x_m(y) - f^x_n(y))\mu(dz)\mu(dy) \leq 4 c_2(2^{-m} + 2^{-n}).
\end{equation*}
If we let $c_3 = 4^pc_2^pc_3'$ then it follows that
\begin{equation*}
\Ebb \left[ \left| \langle U(t), f^x_m \rangle_\mu - \langle U(t), f^x_n \rangle_\mu \right|^{2p} \right] \leq c_3 (2^{-m} + 2^{-n})^p,
\end{equation*}
so $(\langle U(t), f^x_n \rangle)_\mu)_{n \geq 0}$ is Cauchy in $L^{2p}(\Omega)$ as $n \to \infty$. If we let $\tilde u(t,x)$ be the limit, then 
immediately we have that
\begin{equation*}
\Ebb \left[ \left| \langle U(t), f^x_n \rangle_\mu - \tilde u(t,x) \right|^{2p} \right] \leq c_3 2^{-np}.
\end{equation*}
\end{proof}
The collection $(\tilde u(t,x): t \in [0,T],\ x \in F)$ constructed in Theorem \ref{randomfieldexist} is the precursor to our candidate random field. 
Currently this is nothing more than a collection of unrelated real random variables indexed by $[0,T] \times F$ so we cannot show that 
$(\tilde u(t,\cdot))_{t \in [0,T]}$ is a version of $U$; there are problems of joint measurability which will be resolved in the next section.

\section{Continuity of random field}\label{cty}

In this section we collect the estimates required to use the continuity theorems \cite[Theorem 3.17, Corollary 3.19]{Hambly2016} on the 
collection $(\tilde u(t,x): t \in [0,T],\ x \in F)$.

\subsection{Spatial estimate}
\begin{prop}\label{spatial}
Assume either Hypothesis \ref{hyp1} or Hypothesis \ref{hyp2}. Then there exists a constant $c_4 > 0$ such that for all $t \in [0,T]$ and $x,y \in F$,
\begin{equation*}
\Ebb\left[ \left| \tilde u(t,x) - \tilde u(t,y) \right|^{2p} \right] \leq c_4 R(x,y)^p.
\end{equation*}
\end{prop}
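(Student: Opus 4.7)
The strategy is to realize $\tilde u(t,x)-\tilde u(t,y)$ as the $L^{2p}(\Omega)$-limit of $\langle U(t), f^x_n - f^y_n\rangle_\mu$ and then apply Proposition \ref{apriori} with the test function $h = f^x_n - f^y_n$, using the Lipschitz property of the resolvent density from Proposition \ref{resolvLip} to extract the factor $R(x,y)$. The ingredients parallel those used in the proof of Theorem \ref{randomfieldexist}, except that the $x$-dependence now plays the role previously played by the approximation level.

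First I would observe that by Theorem \ref{randomfieldexist} and the triangle inequality in $L^{2p}(\Omega)$, $\langle U(t), f^x_n - f^y_n\rangle_\mu \to \tilde u(t,x) - \tilde u(t,y)$ in $L^{2p}(\Omega)$ as $n\to\infty$. Next, substituting \eqref{SPDEmild}, using $|a+b|^{2p}\leq 2^{2p-1}(|a|^{2p}+|b|^{2p})$ and applying Proposition \ref{apriori}(1) together with either part (2) or part (3) (depending on which hypothesis is assumed) to the test function $h=f^x_n-f^y_n$, I obtain
\begin{equation*}
\Ebb\bigl[|\langle U(t), f^x_n - f^y_n\rangle_\mu|^{2p}\bigr] \leq c_3'\Bigl(\int_F\!\!\int_F \rho^b_1(z,w)(f^x_n(z)-f^y_n(z))(f^x_n(w)-f^y_n(w))\,\mu(dz)\mu(dw)\Bigr)^{\!p},
\end{equation*}
where $c_3'=e^{2pT}2^{2p-1}(2^{-p}+C_p)K$ is the same constant as in the proof of Theorem \ref{randomfieldexist}.

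The core calculation is to bound the double integral by $C(R(x,y)+2^{-n})$. Expanding gives four terms of the form $I_{a,c}:=\int\!\int \rho^b_1(z,w) f^a_n(z)f^c_n(w)\mu(dz)\mu(dw)$ with $a,c\in\{x,y\}$. Since $f^a_n$ is supported in $D^0_n(a)$ and has unit mass, for $z\in D^0_n(a)$ and $w\in D^0_n(c)$ Proposition \ref{resolvLip} combined with Lemma \ref{nhoodestim} yields $|\rho^b_1(z,w)-\rho^b_1(a,c)|\leq 2R(z,a)+2R(w,c)\leq 4c_2 2^{-n}$, so $|I_{a,c}-\rho^b_1(a,c)|\leq 4c_2 2^{-n}$. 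Assembling the four terms and using symmetry of $\rho^b_1$, the double integral equals
\begin{equation*}
\rho^b_1(x,x) - 2\rho^b_1(x,y) + \rho^b_1(y,y) + O(2^{-n}) = \bigl[\rho^b_1(x,x)-\rho^b_1(x,y)\bigr] + \bigl[\rho^b_1(y,y)-\rho^b_1(x,y)\bigr] + O(2^{-n}),
\end{equation*}
which by a further application of Proposition \ref{resolvLip} is bounded in absolute value by $4R(x,y)+16c_2 2^{-n}$.

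Finally I would combine the two estimates to get $\Ebb[|\langle U(t), f^x_n - f^y_n\rangle_\mu|^{2p}]\leq c_3'(4R(x,y)+16c_2 2^{-n})^p$ and pass to the limit $n\to\infty$, using continuity of the $L^{2p}(\Omega)$-norm established in the first step, to obtain the claimed bound with $c_4 = c_3'\,4^p$. The main obstacle is the bookkeeping of Step 3: care is needed because the naive bound $|\rho^b_1(z,w)-\rho^b_1(x,y)|\leq 4c_2 2^{-n}+2R(x,y)$ applied directly inside the double integral only yields $C\cdot R(x,y)^p$ up to an additive $O(2^{-n})$ error, and one must be sure that no pairing of terms leaves a leading $O(1)$ residue; the algebraic telescoping above confirms that everything cancels down to the double-difference $\rho^b_1(x,x)-2\rho^b_1(x,y)+\rho^b_1(y,y)$, which is precisely where the desired linear-in-$R(x,y)$ bound materialises.
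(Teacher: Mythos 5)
Your proposal is correct and follows essentially the same route as the paper's proof: both pass the difference $\tilde u(t,x)-\tilde u(t,y)$ through Theorem \ref{randomfieldexist} to identify it as an $L^{2p}$-limit, apply Proposition \ref{apriori} with $h=f^x_n-f^y_n$, and then use the Lipschitz resolvent bound together with Lemma \ref{nhoodestim} to show the double integral tends to $\rho^b_1(x,x)-2\rho^b_1(x,y)+\rho^b_1(y,y)\leq 4R(x,y)$. Your treatment of the four $I_{a,c}$ terms and the explicit $O(2^{-n})$ error is simply a more detailed spelling-out of the limit calculation that the paper performs in one step, not a different argument.
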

\begin{proof}
By Theorem \ref{randomfieldexist}, Proposition \ref{apriori} and \eqref{SPDEmild},
\begin{equation*}
\begin{split}
\Ebb &\left[ \left| \tilde u(t,x) - \tilde u(t,y) \right|^{2p} \right] = \lim_{n \to \infty} \Ebb\left[ \left| \langle U(t), f^x_n - f^y_n \rangle_\mu \right|^{2p} \right]\\
&\leq c' \lim_{n \to \infty}\left( \int_F \int_F \rho^b_1(z_1,z_2)(f^x_n(z_1) - f^y_n(z_1)) (f^x_n(z_2) - f^y_n(z_2))\mu(dz_1)\mu(dz_2) \right)^p
\end{split}
\end{equation*}
where $c' > 0$ is independent of $x,y,t$. Since $\rho_1^b$ is jointly Lipschitz (Lemma \ref{resolvLip}) and symmetric we see by Lemma \ref{nhoodestim} and the definition of the $f^x_n$ that
\begin{equation*}
\begin{split}
\lim_{n \to \infty} &\int_F \int_F \rho^b_1(z_1,z_2)(f^x_n(z_1) - f^y_n(z_1)) (f^x_n(z_2) - f^y_n(z_2))\mu(dz_1)\mu(dz_2)\\
&= \rho_1^b(x,x) - 2\rho^b_1(x,y) + \rho_1^b(y,y)\\
&\leq 4 R(x,y).
\end{split}
\end{equation*}
The result follows.
\end{proof}
\subsection{Temporal estimate}
The temporal estimate is slightly more complicated to derive. First we prove a lemma:
\begin{lem}\label{semigpincrem}
Let $S = (S_t)_t$ be a contraction semigroup on a separable Hilbert space generated by a self-adjoint operator. If $t_0, t \in (0,\infty)$ then
\begin{equation*}
\left\Vert S_{t_0} - S_{t_0 + t}\right\Vert \leq \frac{t}{t_0 + t}.
\end{equation*}
\end{lem}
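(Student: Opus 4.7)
The plan is to reduce the norm estimate to a one-variable optimisation problem via the spectral theorem. Because $S$ is a strongly continuous contraction semigroup whose generator $-A$ is self-adjoint, we have $A \geq 0$, and by the functional calculus $S_s = \int_{[0,\infty)} e^{-s\lambda}\, dE(\lambda)$ for a projection-valued spectral measure $E$ supported on $\sigma(A) \subseteq [0,\infty)$. Then
\[
S_{t_0} - S_{t_0+t} = \int_{[0,\infty)} \bigl(e^{-t_0\lambda} - e^{-(t_0+t)\lambda}\bigr)\, dE(\lambda),
\]
and since the integrand is a bounded Borel function the operator norm equals the $L^\infty$-norm of the integrand on $\sigma(A)$, so
\[
\bigl\Vert S_{t_0} - S_{t_0+t} \bigr\Vert \leq \sup_{\lambda \geq 0} g(\lambda), \qquad g(\lambda) := e^{-t_0\lambda} - e^{-(t_0+t)\lambda}.
\]

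Now I would compute the supremum of $g$ explicitly. Since $g(0) = 0$ and $g(\lambda) \to 0$ as $\lambda \to \infty$, and $g > 0$ on $(0,\infty)$, the maximum is attained at the unique critical point $\lambda^\ast$ solving $t_0 e^{-t_0\lambda} = (t_0+t)e^{-(t_0+t)\lambda}$, i.e.\
\[
\lambda^\ast = \frac{1}{t}\log\!\left(\frac{t_0+t}{t_0}\right).
\]
Substituting gives $e^{-t\lambda^\ast} = t_0/(t_0+t)$, so $1 - e^{-t\lambda^\ast} = t/(t_0+t)$, and
\[
g(\lambda^\ast) = e^{-t_0 \lambda^\ast}\bigl(1 - e^{-t\lambda^\ast}\bigr) = \left(\frac{t_0}{t_0+t}\right)^{t_0/t}\cdot \frac{t}{t_0+t}.
\]

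Since $t_0/(t_0+t) \in (0,1)$ and $t_0/t > 0$, the prefactor $(t_0/(t_0+t))^{t_0/t}$ is at most $1$, so $g(\lambda^\ast) \leq t/(t_0+t)$, which is the claimed bound. The argument is essentially routine; the only mild subtlety is justifying the passage from the functional calculus representation to the identity $\|f(A)\| = \|f\|_{L^\infty(E)}$ for bounded Borel $f$, but this is standard for self-adjoint $A$ and requires no further assumptions on $A$ beyond what is given.
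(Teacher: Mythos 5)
Your proof is correct and takes essentially the same route as the paper: reduce to $\sup_{\lambda \geq 0}\bigl(e^{-t_0\lambda} - e^{-(t_0+t)\lambda}\bigr)$ via the functional calculus for the self-adjoint, non-positive generator, locate the critical point, and bound the resulting prefactor $\bigl(t_0/(t_0+t)\bigr)^{t_0/t} = (1+t/t_0)^{-t_0/t}$ by $1$.
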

\begin{proof}
Let $L$ be the generator of $S$. This operator is non-positive and self-adjoint, and $S_t = e^{Lt}$. By the functional calculus for self-adjoint operators we thus have that
\begin{equation*}
\left\Vert S_{t_0} - S_{t_0 + t}\right\Vert \leq \sup_{\lambda \in [0,\infty)} \left( e^{-\lambda t_0} - e^{-\lambda (t_0 + t)} \right).
\end{equation*}
The function $\lambda \mapsto e^{-\lambda t_0} - e^{-\lambda (t_0 + t)}$ on $[0,\infty)$ is smooth, non-negative, bounded, and vanishes at $0$ and infinity so we differentiate to find a maximum. We find that
\begin{equation*}
\sup_{\lambda \in [0,\infty)} \left( e^{-\lambda t_0} - e^{-\lambda (t_0 + t)} \right) = \left( 1 + \frac{t}{t_0} \right)^{-\frac{t_0}{t}} \frac{t}{t_0 + t}.
\end{equation*}
Now for all $x \in (0,\infty)$ we have that $(1 + \frac{1}{x})^{-x} \leq 1$ so the result follows.
\end{proof}
\begin{prop}\label{temporal}
Assume either Hypothesis \ref{hyp1} or Hypothesis \ref{hyp2}. Then there exists a constant $c_5 > 0$ such that for all $s,t \in [0,T]$ and $x \in F$,
\begin{equation*}
\Ebb\left[ \left| \tilde u(s,x) - \tilde u(t,x) \right|^{2p} \right] \leq c_5 |s - t|^{p(d_H + 1)^{-1}}.
\end{equation*}
\end{prop}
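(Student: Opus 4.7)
The plan is to decompose $U(t)-U(s)$ (assume without loss of generality $s<t$ and set $\tau=t-s$) via the semigroup identity $S^b_{t-r}=S^b_\tau S^b_{s-r}$ for $r\in[0,s]$ into four pieces
\begin{equation*}
U(t)-U(s) \;=\; \underbrace{\int_s^t S^b_{t-r}\beta_r\,dr}_{I_1} + \underbrace{(S^b_\tau-I)\!\!\int_0^s\!\!S^b_{s-r}\beta_r\,dr}_{I_2} + \underbrace{\int_s^t S^b_{t-r}\sigma_r\,dW(r)}_{I_3} + \underbrace{(S^b_\tau-I)\!\!\int_0^s\!\!S^b_{s-r}\sigma_r\,dW(r)}_{I_4}.
\end{equation*}
Since $\tilde u(t,x)-\tilde u(s,x)$ is the $L^{2p}(\Omega)$-limit of $\langle U(t)-U(s),f^x_n\rangle_\mu$ by Theorem \ref{randomfieldexist}, the triangle inequality in $L^{2p}$ reduces the problem to bounding each $\|\langle I_j,f^x_n\rangle_\mu\|_{L^{2p}}$ by a constant times $\tau^{1/(2(d_H+1))}$, uniformly in $n$.

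For $I_1$ and $I_3$, the proof of Proposition \ref{apriori} applies verbatim with the interval $[0,t]$ replaced by $[s,t]$, giving bounds of the form $\Ebb[|\langle I_j,f^x_n\rangle_\mu|^{2p}] \le C\bigl(\int_0^\tau \|S^b_u f^x_n\|_\mu^2\,du\bigr)^p$ for $j\in\{1,3\}$. For $I_2$ and $I_4$, the key step is to exploit the self-adjointness of $S^b_\tau$ and move the operator onto the test function: $\langle I_2,f^x_n\rangle_\mu = \langle\int_0^s S^b_{s-r}\beta_r\,dr,\tilde h_n\rangle_\mu$ with $\tilde h_n:=(S^b_\tau-I)f^x_n$, and similarly for $I_4$. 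Applying Proposition \ref{apriori} with test function $\tilde h_n$ yields bounds of the form $C\bigl(\langle R^b_1\tilde h_n,\tilde h_n\rangle_\mu\bigr)^p$. Expanding in the eigenbasis of $-\Delta_b$ and applying the elementary inequality
\begin{equation*}
\frac{(1-e^{-\lambda\tau})^2}{1+\lambda} \;\le\; \frac{1-e^{-2\lambda\tau}}{1+\lambda} \;\le\; 2\int_0^\tau e^{-2\lambda v}\,dv,
\end{equation*}
which follows from $(1-a)^2\le 1-a^2$ for $a\in[0,1]$ together with $\tfrac{2\lambda}{1+\lambda}\le 2$, one obtains $\langle R^b_1\tilde h_n,\tilde h_n\rangle_\mu \le 2\int_0^\tau \|S^b_v f^x_n\|_\mu^2\,dv$ by Parseval. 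Thus all four bounds collapse to the single integral $\int_0^\tau\|S^b_v f^x_n\|_\mu^2\,dv$, uniformly in $n$.

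Finally, $\|S^b_v f^x_n\|_\mu^2 = \langle S^b_{2v}f^x_n,f^x_n\rangle_\mu$ is the average of the heat kernel $p^b_{2v}$ over $D^0_n(x)\times D^0_n(x)$, which converges to $p^b_{2v}(x,x)$ by continuity of $p^b_{2v}(\cdot,\cdot)$ for $v>0$; dominated convergence (with dominating function supplied by the heat-kernel diagonal bound below) then yields $\lim_{n\to\infty}\int_0^\tau\|S^b_v f^x_n\|_\mu^2\,dv = \int_0^\tau p^b_{2v}(x,x)\,dv$. The standard heat kernel diagonal estimate for p.c.f. fractals, $p^b_v(x,x)\le Cv^{-d_s/2}$ for $v\in(0,T]$---a consequence of Proposition \ref{spectra} and the analysis of self-similar Dirichlet forms (see e.g.\ \cite{Barlow1998,Kigami2001})---combined with $1-d_s/2 = 1/(d_H+1)$, gives $\int_0^\tau p^b_{2v}(x,x)\,dv \le C_0 \tau^{1/(d_H+1)}$. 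Summing the four contributions and raising to the power $p$ delivers $\Ebb[|\tilde u(s,x)-\tilde u(t,x)|^{2p}]\le c_5|s-t|^{p/(d_H+1)}$.

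The main obstacle I anticipate is the treatment of the semigroup-difference terms $I_2$ and $I_4$: a naive application of Proposition \ref{apriori} with test function $\tilde h_n=(S^b_\tau-I)f^x_n$ would produce a factor $\|\tilde h_n\|_\mu$ that is comparable to $\|f^x_n\|_\mu$ and hence blows up as $n\to\infty$, giving no useful $\tau$-dependence. The spectral reduction above is precisely what collapses $\langle R^b_1\tilde h_n,\tilde h_n\rangle_\mu$ back to $\int_0^\tau\|S^b_v f^x_n\|_\mu^2\,dv$, a quantity that is controlled uniformly in $n$ and exhibits the sharp temporal scaling via the heat-kernel diagonal estimate.
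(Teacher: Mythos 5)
Your proposal is correct, but it takes a genuinely different route from the paper. The paper's proof keeps the parameter $n$ in the delta-approximant $f^x_n$ finite throughout: it writes $\langle U(s+t)-U(s),f^x_n\rangle_\mu$ as a single integral against the kernel $S^b_{s+t-t'}-\1bb_{[0,s]}(t')S^b_{s-t'}$, bounds that kernel in operator norm via Lemma \ref{semigpincrem} ($\Vert S^b_{t_0}-S^b_{t_0+t}\Vert\le t/(t_0+t)$), and thereby obtains a crude estimate of the form $c_1'\,2^{pd_Hn}\,t^p$ that degenerates as $n\to\infty$. This is compensated by the Cauchy error $c_3 2^{-np}$ from Theorem \ref{randomfieldexist}, and the sharp exponent $p/(d_H+1)$ emerges only after optimizing the sum over $n$. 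In particular the paper never touches the heat kernel in this section---this is deliberate, as the introduction stresses that the continuity argument should avoid the explicit form of the heat kernel. Your argument instead passes the semigroup difference $(S^b_\tau-I)$ through to the test function, exploits the spectral inequality $(1-e^{-\lambda\tau})^2/(1+\lambda)\le (1-e^{-2\lambda\tau})/\lambda$ to collapse the resolvent quadratic form back to $2\int_0^\tau\Vert S^b_v f^x_n\Vert_\mu^2\,dv$, and then bounds this integral uniformly in $n$ by the on-diagonal heat kernel estimate $p^b_{2v}(y,z)\le c\,v^{-d_s/2}$ (which the paper only records in Lemma \ref{thm:hkestim}, in Section 6). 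Both routes yield exactly the exponent $p(d_H+1)^{-1}$ since $1-d_s/2=(d_H+1)^{-1}$. Your approach is arguably more transparent about where the exponent comes from, at the cost of importing the heat kernel sup-bound, which is a stronger analytic input than the paper allows itself here; the paper's approach is more elementary, at the cost of an extra optimization over $n$.

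Two small technical remarks on your write-up. First, the dominated-convergence step that passes to $\int_0^\tau p^b_{2v}(x,x)\,dv$ is unnecessary: since $\Vert S^b_v f^x_n\Vert_\mu^2=\int\int p^b_{2v}(y,z)f^x_n(y)f^x_n(z)\,d\mu\,d\mu\le \sup_{y,z}p^b_{2v}(y,z)\le c_1(2T)(2v)^{-d_s/2}$ for $v\in(0,T]$ uniformly in $n$, the desired bound $\int_0^\tau\Vert S^b_v f^x_n\Vert_\mu^2\,dv\le C\tau^{(d_H+1)^{-1}}$ holds for each $n$ directly, which is cleaner than bounding the limit. Second, the claim that Proposition \ref{apriori} applies ``verbatim'' to $I_1,I_3$ is a mild overstatement---the integration interval $[s,t]$ must be shifted---but the resulting bound $\Ebb[|\langle I_j,f^x_n\rangle_\mu|^{2p}]\le C\bigl(\int_0^\tau\Vert S^b_v f^x_n\Vert_\mu^2\,dv\bigr)^p$ is indeed correct for $j\in\{1,3\}$ by the same Cauchy--Schwarz/H\"older and BDG computations.
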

\begin{proof}
Fix $x \in F$ and $s \in [0,T]$, $t > 0$ such that $s+t \in [0,T]$. Then by Theorem \ref{randomfieldexist}, for all $n \geq 0$ we have that
\begin{equation}\label{time1}
\Ebb\left[ \left| \tilde u(s+t,x) - \tilde u(s,x) \right|^{2p} \right] \leq 2^{2p-1}\Ebb\left[ \left| \langle U(s+t) - U(s), f^x_n \rangle_\mu \right|^{2p} \right] + c_3 2^{4p-1} \cdot 2^{-np}.
\end{equation}
Now if we let $f_n^{x*}$ be the bounded linear functional on $\Hcal$ given by $ h \mapsto \langle h , f_n^x \rangle_\mu$, then by \eqref{SPDEmild},
\begin{equation*}
\begin{split}
\langle U(s+t) - U(s), f^x_n \rangle_\mu &= \int_0^{s+t} f_n^{x*} \left( S^b_{s+t - t'} - \1bb_{[0,s]}(t') S^b_{s-t'} \right) \beta_{t'} dt'\\
&\quad + \int_0^{s+t} f_n^{x*} \left( S^b_{s+t - t'} - \1bb_{[0,s]}(t') S^b_{s-t'} \right) \sigma_{t'} dW(t').
\end{split}
\end{equation*}
We deal with these two terms separately. For the $\beta$ term, both Hypothesis \ref{hyp1} and Hypothesis \ref{hyp2} allow for the same calculation: by the self-adjointness of $S^b$ and H\"older's inequality,
\begin{equation*}
\begin{split}
&\Ebb \left[ \left| \int_0^{s+t} f_n^{x*} \left( S^b_{s+t - t'} - \1bb_{[0,s]}(t') S^b_{s-t'} \right) \beta_{t'} dt' \right|^{2p} \right]\\
&= \Ebb \left[ \left( \int_0^{s+t} \left| \left\langle \beta_{t'}, \left( S^b_{s+t - t'} - \1bb_{[0,s]}(t') S^b_{s-t'} \right) f_n^x \right\rangle_\mu \right| dt'\right)^{2p} \right]\\
&\leq \Ebb \left[ \left( \int_0^{s+t} \Vert \beta_{t'} \Vert_\mu \left\Vert \left( S^b_{s+t - t'} - \1bb_{[0,s]}(t') S^b_{s-t'} \right) f_n^x \right\Vert_\mu dt'\right)^{2p} \right]\\
&\leq \Ebb \left[ \left( \int_0^{s+t} \Vert \beta_{t'} \Vert_\mu^2 dt' \right)^p \right] \left( \int_0^{s+t} \left\Vert \left( S^b_{s+t - t'} - \1bb_{[0,s]}(t') S^b_{s-t'} \right) f_n^x \right\Vert_\mu^2 dt' \right)^p\\
&\leq \Vert f^x_n \Vert_\mu^{2p} \Ebb \left[ \left( \int_0^T \Vert \beta_{t'} \Vert_\mu^2 dt' \right)^p \right] \left( \int_0^{s+t} \left\Vert S^b_{s+t - t'} - \1bb_{[0,s]}(t') S^b_{s-t'} \right\Vert^2 dt' \right)^p
\end{split}
\end{equation*}
and then using Lemma \ref{semigpincrem},
\begin{equation}\label{semigpincremeqn}
\begin{split}
\int_0^{s+t} &\left\Vert S^b_{s+t - t'} - \1bb_{[0,s]}(t') S^b_{s-t'} \right\Vert^2 dt'\\
&= \int_0^s \left\Vert S^b_{s+t - t'} - S^b_{s-t'} \right\Vert^2 dt' + \int_s^{s+t} \left\Vert S^b_{s+t - t'} \right\Vert^2 dt'\\
&= \int_0^s \left\Vert S^b_{t + t'} - S^b_{t'} \right\Vert^2 dt' + \int_0^t \left\Vert S^b_{t'} \right\Vert^2 dt'\\
&\leq \int_0^s \frac{t^2}{(t' + t)^2} dt' + \int_0^t 1 dt'\\
&\leq 2t.
\end{split}
\end{equation}
It follows that
\begin{equation*}
\Ebb \left[\left| \int_0^{s+t} f_n^{x*} \left( S^b_{s+t - t'} - \1bb_{[0,s]}(t') S^b_{s-t'} \right) \beta_{t'} dt' \right|^{2p} \right] \leq 2^pK \Vert f^x_n \Vert_\mu^{2p} t^p.
\end{equation*}

For the $\sigma$ term, we first assume Hypothesis \ref{hyp1}. We use the Burkholder-Davis-Gundy inequality (the validity of which, given Hypothesis \ref{hyp1}, is easy to verify; see proof of Proposition \ref{apriori}(2)) and see that
\begin{equation*}
\begin{split}
&\Ebb \left[ \left|\int_0^{s+t} f_n^{x*} \left( S^b_{s+t - t'} - \1bb_{[0,s]}(t') S^b_{s-t'} \right) \sigma_{t'} dW(t') \right|^{2p} \right]^\frac{1}{p}\\
&\leq C'_p \Ebb \left[ \left(\int_0^{s+t} \left\Vert f_n^{x*} \left( S^b_{s+t - t'} - \1bb_{[0,s]}(t') S^b_{s-t'} \right) \sigma_{t'} \right\Vert_{\Lcal_2(\Hcal,\Rbb)}^2 dt' \right)^p \right]^\frac{1}{p}\\
&= C'_p \Ebb \left[ \left(\int_0^{s+t} \left\Vert \sigma^*_{t'} \left( S^b_{s+t - t'} - \1bb_{[0,s]}(t') S^b_{s-t'} \right) f^x_n \right\Vert_\mu^2 dt' \right)^p \right]^\frac{1}{p}\\
&\leq C'_p \int_0^{s+t} \Ebb \left[ \left\Vert \sigma^*_{t'} \right\Vert^{2p} \right]^\frac{1}{p} \left\Vert \left( S^b_{s+t - t'} - \1bb_{[0,s]}(t') S^b_{s-t'} \right) f^x_n \right\Vert_\mu^2 dt'\\
&\leq C'_p \sup_{t' \in [0,T]} \Ebb \left[ \Vert \sigma_{t'} \Vert^{2p} \right]^\frac{1}{p} \int_0^{s+t} \left\Vert \left( S^b_{s+t - t'} - \1bb_{[0,s]}(t') S^b_{s-t'} \right) f^x_n \right\Vert_\mu^2 dt' \\
&\leq C'_p K^\frac{1}{p} \Vert f^x_n \Vert_\mu^2 \int_0^{s+t} \left\Vert S^b_{s+t - t'} - \1bb_{[0,s]}(t') S^b_{s-t'} \right\Vert^2 dt'\\
&\leq 2 C'_p K^\frac{1}{p} \Vert f^x_n \Vert_\mu^2 t
\end{split}
\end{equation*}
where we have used the Minkowski integral inequality and the calculation \eqref{semigpincremeqn} a second time. If we instead assume Hypothesis \ref{hyp2} then we get the same estimate by a similar calculation, but use the Minkowski integral inequality twice; see the proof of Proposition \ref{apriori}. Plugging these values into \eqref{time1} we get
\begin{equation*}
\Ebb\left[ \left| \tilde u(s+t,x) - \tilde u(s,x) \right|^{2p} \right] \leq 2^{5p-2}  (1 + (C'_p)^p) K \Vert f^x_n \Vert_\mu^{2p} t^p + c_3 2^{4p-1} \cdot 2^{-np}.
\end{equation*}
We now use the fact that $\Vert f^x_n \Vert_\mu^2 = \mu(D^0_n(x))^{-1} \leq r_{\min}^{-d_H} 2^{d_H n}$. If we let $c'_1 = 2^{5p-2}  (1 + (C'_p)^{2p}) K r_{\min}^{-pd_H}$ and $c'_2 = c_3 2^{4p-1}$ we have that
\begin{equation*}
\Ebb\left[ \left| \tilde u(s+t,x) - \tilde u(s,x) \right|^{2p} \right] \leq c'_1 2^{d_H np}t^p + c'_2 2^{-np}.
\end{equation*}
The final step is to minimize the right-hand side of the above equation over $n \geq 0$, which is similar to the method used to prove \cite[Proposition 5.5]{Hambly2016}. It is in fact easier to let $c''_2 = c'_2 \vee d_Hc'_1 T^p$ and then minimize $c'_1 2^{d_H np}t^p + c''_2 2^{-np}$ over $n \geq 0$. To this end, let $f(y) = c'_1 2^{d_H y}t^p + c''_2 2^{-y}$ be a function on $\Rbb$, then the unique minimum value of $f$ occurs at
\begin{equation*}
y_0 := \frac{1}{(d_H + 1) \log 2} \log\left( \frac{c_2''}{d_Hc'_1 t^p} \right) \geq 0.
\end{equation*}
If we set $n = \lceil y_0 p^{-1} \rceil$ then since $f$ is increasing in $[y_0, \infty)$ we get
\begin{equation*}
\begin{split}
\Ebb\left[ \left| \tilde u(s+t,x) - \tilde u(s,x) \right|^{2p} \right] &\leq f(np)\\
&\leq f(y_0 + p)\\
&= c'_1 2^{d_Hp} \left( \frac{c_2''}{d_Hc'_1 t^p} \right)^{\frac{d_H}{d_H + 1}}t^p + c''_2 2^{-p} \left( \frac{c_2''}{d_Hc'_1 t^p} \right)^{-\frac{1}{d_H + 1}}\\
&=: c''_1 t^{\frac{p}{d_H + 1}}
\end{split}
\end{equation*}
where $c''_1 > 0$ is a constant that does not depend on $s,t,x$.
\end{proof}

\subsection{Proof of main theorem}

\begin{proof}[Proof of Theorem \ref{mainthm}]
The spatial and temporal estimates in Proposition \ref{spatial} and Proposition \ref{temporal} respectively combined with the continuity theorem \cite[Corollary 3.19]{Hambly2016} together imply that there exists a version $u: \Omega \times [0,T] \times F \to \Rbb$ of the collection $(\tilde u(t,x) : t \in [0,T],\ x \in F)$  satisfying the required H\"{o}lder continuity properties. We have that $u(t,x)$ is a random variable satisfying $\tilde u(t,x) = u(t,x)$ almost surely for each $(t,x) \in [0,T] \times F$. It only remains to show that $(u(t,\cdot))_{t \in [0,T]}$ is a version of $U$.

We need to show that for each $t \in [0,T]$, $u(t,\cdot) = U(t)$ as elements of $\Hcal$ almost surely. Fix $t \in [0,T]$. Since $u(t,\cdot)$ is almost surely continuous on the compact metric space $(F,R)$ it is a Carathéodory function, so by \cite[Lemma 4.51]{Aliprantis2006} it must be jointly measurable on $\Omega \times F$. Additionally the compactness of $F$ implies that $u(t,\cdot)$ is almost surely bounded, so we have that $u(t,\cdot) \in \Hcal$ almost surely. Since $\Dcal$ is dense in $\Hcal$, it will suffice to show that $\langle u(t,\cdot), h \rangle_\mu = \langle U(t), h \rangle_\mu$ for all $h \in \Dcal$ almost surely. Furthermore, every element of $\Dcal$ is continuous, and thus can be approximated arbitrarily closely in $\Hcal$ by finite linear combinations of functions of the form $\1bb_{F_w}$, $w \in \Wbb_*$. Since $\Wbb_*$ is countable, we therefore need only to show that $\langle u(t,\cdot) , \1bb_{F_w} \rangle_\mu = \langle U(t) , \1bb_{F_w} \rangle_\mu$ almost surely for each $w \in \Wbb_*$. To this end, we fix $w \in \Wbb_*$. By the final part of Lemma \ref{partitionfacts} and then repeated application of Lemma \ref{partitionfacts}(2), there exists $n_w \geq 0$ such that for each $n \geq n_w$, there exists a subset $\Lambda_n' \subseteq \Lambda_n$ such that
\begin{equation*}
\bigcup_{v \in \Lambda_n'} F_v = F_w.
\end{equation*}
By Lemma \ref{partitionfacts}(3), for each $v \in \Lambda'_n$ we have that $D^0_n(x) = F_v$ for all but finitely many $x \in F_v$, so the map $x \mapsto f^x_n$ from $F_w$ to $\Hcal$ is measurable. Thus $(\omega, x) \mapsto \langle U(t), f^x_n \rangle_\mu \1bb_{F_w}(x)$ is jointly measurable. Again by Lemma \ref{partitionfacts}(3), for each $n \geq n_w$ the set of $y \in F$ such that $y \in F_v \cap F_{v'}$ for two distinct $v, v' \in \Lambda'_n$ is finite. It follows that
\begin{equation*}
\int_{F_w} \langle U(t), f^x_n \rangle_\mu \mu(dx) = \sum_{v \in \Lambda_n'} \int_{F_v} \langle U(t), f^x_n \rangle_\mu \mu(dx).
\end{equation*}
Thus we have that
\begin{equation*}
\begin{split}
\sum_{v \in \Lambda_n'} \int_{F_v} \langle U(t), f^x_n \rangle_\mu \mu(dx) &= \sum_{v \in \Lambda_n'} \mu(F_v)^{-1} \int_{F_v} \langle U(t), \1bb_{F_v} \rangle_\mu \mu(dx)\\
&= \sum_{v \in \Lambda_n'} \langle U(t), \1bb_{F_v} \rangle_\mu\\
&= \sum_{v \in \Lambda_n'}\int_{F_v} U(t)(y) \mu(dy)\\
&= \int_{F_w} U(t)(y) \mu(dy).
\end{split}
\end{equation*}
We conclude that
\begin{equation*}
\int_{F_w} \langle U(t), f^x_n \rangle_\mu \mu(dx) = \langle U(t), \1bb_{F_w} \rangle_\mu
\end{equation*}
for all $n \geq n_w$. Finally from Jensen's inequality, Tonelli's theorem and Theorem \ref{randomfieldexist} we have that
\begin{equation*}
\begin{split}
\Ebb &\left[ \left| \int_{F_w} \langle U(t), f^x_n \rangle_\mu \mu(dx) - \langle u(t,\cdot) , \1bb_{F_w} \rangle_\mu \right|^{2p} \right]\\
&\leq \mu(F_w)^{2p-1} \int_{F_w} \Ebb\left[ \left| \langle U(t), f^x_n \rangle_\mu - u(t,x) \right|^{2p} \right] \mu(dx)\\
&\leq c_3 \mu(F_w)^{2p} 2^{-np}
\end{split}
\end{equation*}
for $n \geq n_w$. Then using Markov's inequality, for every $\epsilon > 0$ we have the bound
\begin{equation*}
\Pbb\left[ \left| \int_{F_w} \langle U(t), f^x_n \rangle_\mu \mu(dx) - \langle u(t,\cdot) , \1bb_{F_w} \rangle_\mu \right| \geq \epsilon \right] \leq c_3 \mu(F_w)^{2p} \epsilon^{-2p} 2^{-np}
\end{equation*}
for sufficiently large $n$. Therefore by the Borel-Cantelli lemma,
\begin{equation*}
\int_{F_w} \langle U(t), f^x_n \rangle_\mu \mu(dx) \to \langle u(t,\cdot) , \1bb_{F_w} \rangle_\mu
\end{equation*}
as $n \to \infty$ almost surely. So $\langle u(t,\cdot) , \1bb_{F_w} \rangle_\mu = \langle U(t) , \1bb_{F_w} \rangle_\mu$ almost surely and the proof is complete.
\end{proof}

\section{Application to a class of da Prato--Zabczyk SPDEs}\label{applications1}
In this section we give an example of how to apply Theorem \ref{mainthm} to the solutions of a class of SPDEs on p.c.f. fractals of the form
\begin{equation}\label{SPDEapp}
\begin{split}
dY(t) &= \Delta_bY(t)dt + f(t,Y(t))dt + g(t,Y(t))dW(t), \quad t \in [0,T],\\
Y(0) &= Y_0
\end{split}
\end{equation}
with $T > 0$ and $b \in 2^{F^0}$. We recall that a \textit{mild solution} of the SPDE \eqref{SPDEapp} is a predictable $\Hcal$-valued process $Y = (Y_t)_{t \in [0,T]}$ satisfying
\begin{equation*}
Y(t) = S^b_tY_0 + \int_0^t S^b_{t-s}f(s,Y(s))ds + \int_0^t S^b_{t-s} g(s,Y(s))dW(s)
\end{equation*}
almost surely for each $t \in [0,T]$. For a topological space $\Scal$, let $\Bcal(\Scal)$ be the Borel $\sigma$-algebra on $\Scal$. Let $\Pcal_T$ be the predictable $\sigma$-algebra on $\Omega \times [0,T]$ associated with the truncated filtration $(\Fcal_t: 0 \leq t \leq T)$. The following hypothesis is adapted from \cite[Hypothesis 7.2]{DaPrato1992}:
\begin{hyp}\label{apphyp}
We make the following assumptions for the SPDE \eqref{SPDEapp}. Suppose there exists $q \geq 2$ such that the following hold:
\begin{enumerate}
\item $Y_0$ is an $\Hcal$-valued $\Fcal_0$-measurable random variable such that $\Ebb[\Vert Y_0 \Vert_\mu^q] < \infty$.
\item The function $f: \Omega \times [0,T] \times \Hcal \to \Hcal$ is measurable from $\Pcal_T \otimes \Bcal(\Hcal)$ into $\Bcal(\Hcal)$.
\item The function $g: \Omega \times [0,T] \times \Hcal \to \Lcal(\Hcal)$ is measurable from $\Pcal_T \otimes \Bcal(\Hcal)$ into $\Bcal(\Lcal(\Hcal))$.
\item There exists a constant $C>0$ and a non-negative real predictable process $M: \Omega \times [0,T] \to \Rbb$ with
\begin{equation*}
\Vert M \Vert_q := \sup_{s \in [0,T]} \Ebb\left[ M(s)^q \right]^\frac{1}{q} < \infty
\end{equation*}
such that for all $h,h' \in \Hcal$, $\omega \in \Omega$ and $t \in [0,T]$ we have that
\begin{equation*}
\Vert f(\omega,t,h) \Vert_\mu + \Vert g(\omega,t,h) \Vert \leq M(\omega,t) + C\Vert h \Vert_\mu
\end{equation*}
and
\begin{equation*}
\Vert f(\omega,t,h) - f(\omega,t,h') \Vert_\mu + \Vert g(\omega,t,h) - g(\omega,t,h') \Vert \leq C\Vert h - h' \Vert_\mu.
\end{equation*}
\end{enumerate}
\end{hyp}
\begin{defn}
As in the proof of \cite[Theorem 7.2]{DaPrato1992}, let $\Hscr_q$ be the space of $\Hcal$-valued predictable processes $Z = (Z(t))_{t \in [0,T]}$ such that
\begin{equation*}
\vertiii{Z}_q := \sup_{t \in [0,T]} \Ebb \left[ \Vert Z(t) \Vert_\mu^q \right]^\frac{1}{q} < \infty.
\end{equation*}
We equip $\Hscr_q$ with the norm $\vertiii{\cdot}_q$, which makes it a Banach space. Define a stochastic-process-valued function $\Kscr$ on $\Hscr_q$ by
\begin{equation*}
\Kscr(Z)(t) = S^b_tY_0 + \int_0^t S^b_{t-s}f(s,Z(s))ds + \int_0^t S^b_{t-s} g(s,Z(s))dW(s).
\end{equation*}
\end{defn}
\begin{lem}\label{welldefK}
Assume Hypothesis \ref{apphyp}. Then there exist constants $\alpha_1, \alpha_2 > 0$ dependent only on $T$ such that if $Z \in \Hscr_q$ then
\begin{equation*}
\vertiii{\Kscr(Z)}_q \leq \Ebb[\Vert Y_0 \Vert_\mu^q]^\frac{1}{q} + \alpha_1 + \alpha_2 \vertiii{Z}_q.
\end{equation*}
In particular, the function $\Kscr$ maps $\Hscr_q$ into itself.
\end{lem}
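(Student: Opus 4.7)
The plan is to split $\Kscr(Z)(t)$ into its three natural summands---the semigroup applied to the initial condition, the deterministic convolution with $f$, and the stochastic convolution with $g$---and to bound each of them in $L^q(\Omega;\Hcal)$ separately, uniformly in $t \in [0,T]$, via Minkowski's inequality. The initial-condition term is handled immediately by the contractivity of $S^b_t$ on $\Hcal$: this gives the contribution $\Ebb[\Vert Y_0\Vert_\mu^q]^{1/q}$ that appears unchanged in the claimed estimate.

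For the drift term, Minkowski's integral inequality combined with contractivity of $S^b_{t-s}$ reduces the problem to estimating $\int_0^t \Ebb[\Vert f(s,Z(s))\Vert_\mu^q]^{1/q}\,ds$. The linear growth bound in Hypothesis \ref{apphyp}(4), together with Minkowski's inequality in $L^q(\Omega)$, controls the integrand by $\Vert M\Vert_q + C\vertiii{Z}_q$, so the drift term contributes at most $T(\Vert M\Vert_q + C\vertiii{Z}_q)$.

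The hard part is the stochastic convolution, because $g$ is only assumed to take values in $\Lcal(\Hcal)$ and is not itself Hilbert--Schmidt. I would invoke the Hilbert-space Burkholder--Davis--Gundy inequality to obtain
\begin{equation*}
\Ebb\left[\left\Vert \int_0^t S^b_{t-s}g(s,Z(s))\,dW(s)\right\Vert_\mu^q\right] \leq c_q \Ebb\left[\left(\int_0^t \Vert S^b_{t-s}g(s,Z(s))\Vert_{\Lcal_2(\Hcal,\Hcal)}^2\,ds\right)^{q/2}\right].
\end{equation*}
The factorisation $\Vert S^b_r g\Vert_{\Lcal_2} \leq \Vert S^b_r\Vert_{\Lcal_2}\Vert g\Vert$ shifts the Hilbert--Schmidt burden onto the semigroup, and the same calculation performed in the proof of the proposition following Hypothesis \ref{hyp2} shows that $\Psi := \int_0^T \Vert S^b_r\Vert_{\Lcal_2(\Hcal,\Hcal)}^2\,dr < \infty$ (using the spectral asymptotics of Proposition \ref{spectra} and the fact that $d_s < 2$). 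Writing $\psi(r) := \Vert S^b_r\Vert_{\Lcal_2}^2$, Jensen's inequality applied to the finite measure $\psi(t-s)\,ds$ on $[0,t]$ with the convex function $x \mapsto x^{q/2}$ (legitimate because $q \geq 2$) then yields
\begin{equation*}
\left(\int_0^t \psi(t-s)\Vert g(s,Z(s))\Vert^2\,ds\right)^{q/2} \leq \Psi^{q/2-1} \int_0^t \psi(t-s)\Vert g(s,Z(s))\Vert^q\,ds,
\end{equation*}
and taking expectations, applying the linear growth of $g$, and using $(a+b)^q \leq 2^{q-1}(a^q+b^q)$ produces an $L^q$-bound for the stochastic convolution of the form $\tilde C_T(\Vert M\Vert_q + \vertiii{Z}_q)$ for a constant $\tilde C_T$ depending only on $T$ (and the fixed data $q,C,\Psi$).

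Summing the three contributions and taking the supremum over $t \in [0,T]$ gives the stated inequality with, say, $\alpha_1 = (TC + \tilde C_T)\Vert M\Vert_q$ and $\alpha_2 = TC + \tilde C_T$. The assertion that $\Kscr$ maps $\Hscr_q$ into itself then follows once one verifies the predictability of $\Kscr(Z)$, which is a routine consequence of Hypothesis \ref{apphyp}(2)--(3), the predictability of $Z$, and the strong continuity of $S^b$.
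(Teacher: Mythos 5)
Your proof is correct and follows the same overall decomposition as the paper: split $\Kscr(Z)$ into the initial-condition term, the drift convolution, and the stochastic convolution, bound each uniformly in $t$, and then handle predictability separately. There is one genuine (and valid) technical variation. For the stochastic convolution the paper invokes \cite[Theorem 4.37]{DaPrato1992}, a ready-made maximal inequality for convolutions with a semigroup that already comes packaged with a Minkowski-type integral inequality, yielding
$\Ebb\bigl[\Vert\cdot\Vert^q\bigr] \leq k_q\bigl(\int_0^t \Ebb[\Vert S^b_{t-s}g(s,Z(s))\Vert_{\Lcal_2}^q]^{2/q}\,ds\bigr)^{q/2}$,
whereas you use the Hilbert-space Burkholder--Davis--Gundy inequality for the auxiliary martingale $r\mapsto\int_0^r S^b_{t-s}g(s,Z(s))\,dW(s)$ (with $t$ fixed) and then Jensen's inequality with the weighted measure $\psi(t-s)\,ds$. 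Both routes lead to the same bound, proportional to $\Psi^{q/2}\bigl(\Vert M\Vert_q^q + C^q\vertiii{Z}_q^q\bigr)$; yours is slightly more self-contained in that it does not rely on the particular packaging of the DPZ theorem, while the paper's is marginally shorter because the Minkowski step is absorbed into the cited result. The drift estimate is handled by Minkowski's integral inequality in your version and by the elementary Jensen bound $\bigl|\int_0^t a\,ds\bigr|^q \leq t^{q-1}\int_0^t a^q\,ds$ in the paper, which is a cosmetic difference. One point worth flagging: you dispose of the predictability of $\Kscr(Z)$ in a single sentence as ``routine,'' but in the paper this step occupies the second half of the proof, where stochastic $L^2$-continuity of both convolutions is established (using the spectral decay of $\Vert S^b_s\Vert_{\Lcal_2}$ and dominated convergence) in order to invoke \cite[Proposition 3.7(ii)]{DaPrato1992}. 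Your proof would be complete if you indicated that stochastic continuity is the mechanism by which a predictable version is obtained.
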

\begin{proof}
Fix $Z \in \Hscr_q$. For $t \in [0,T]$ we see that
\begin{equation*}
\begin{split}
\Ebb \left[ \left\Vert \int_0^t S^b_{t-s}f(s,Z(s))ds \right\Vert_\mu^q \right] &\leq \Ebb \left[ \left| \int_0^t \Vert f(s,Z(s)) \Vert_\mu ds \right|^q \right]\\
&\leq t^{q-1} \Ebb \left[ \int_0^t \Vert f(s,Z(s)) \Vert_\mu^q ds \right]\\
&\leq t^{q-1} \Ebb \left[ \int_0^t (M(s) + C\Vert Z(s) \Vert_\mu)^q ds \right]\\
&\leq 2^{q-1}t^q (\Vert M \Vert_q^q + C^q\vertiii{Z}_q^q)
\end{split}
\end{equation*}
and by \cite[Theorem 4.37]{DaPrato1992} there exists a constant $k_q > 0$ such that
\begin{equation*}
\begin{split}
\Ebb &\left[ \left\Vert \int_0^t S^b_{t-s} g(s,Z(s))dW(s) \right\Vert_\mu^q \right]\\
&\leq k_q \left( \int_0^t \Ebb \left[ \Vert S^b_{t-s} g(s,Z(s)) \Vert_{\Lcal_2(\Hcal,\Hcal)}^q \right]^\frac{2}{q} ds \right)^\frac{q}{2}\\
&\leq k_q \left( \int_0^t \Ebb \left[ \Vert g(s,Z(s)) \Vert^q \right]^\frac{2}{q} \Vert S^b_{t-s} \Vert_{\Lcal_2(\Hcal,\Hcal)}^2 ds \right)^\frac{q}{2}\\
&\leq k_q \sup_{s \in [0,t]} \Ebb \left[ \Vert g(s,Z(s)) \Vert^q \right] \left( \int_0^t \Vert S^b_s \Vert_{\Lcal_2(\Hcal,\Hcal)}^2 ds \right)^\frac{q}{2}\\
&\leq 2^{q - 1} k_q (\Vert M \Vert_q^q + C^q\vertiii{Z}_q^q) \left( \int_0^t \Vert S^b_s \Vert_{\Lcal_2(\Hcal,\Hcal)}^2 ds \right)^\frac{q}{2}\\
&< \infty.
\end{split}
\end{equation*}
The integral of $\Vert S^b_s \Vert_{\Lcal_2(\Hcal,\Hcal)}^2$ is finite by Proposition \ref{spectra} and the fact that $d_s < 2$. Finally we note that $\Vert S^b_tY_0 \Vert_\mu \leq \Vert Y_0 \Vert_\mu < \infty$ and we put together all of these estimates to see that there exist constants $\alpha_1, \alpha_2 > 0$ dependent only on $T$ such that
\begin{equation*}
\vertiii{\Kscr(Z)}_q \leq \Ebb[\Vert Y_0 \Vert_\mu^q]^\frac{1}{q} + \alpha_1 + \alpha_2 \vertiii{Z}_q.
\end{equation*}

It remains to show that $\Kscr(Z)$ is predictable (that is to say, it has a predictable version). The term $S^b_t Y_0$ causes no trouble as it is $\Fcal_0$-measurable. By \cite[Proposition 3.7(ii)]{DaPrato1992}, it is enough to show that the convolutions $t \mapsto\int_0^t S^b_{t-s}f(s,Z(s))ds$ and $t \mapsto \int_0^t S^b_{t-s} g(s,Z(s))dW(s)$ are stochastically continuous. These follow in a similar way to the proof of \cite[Theorem 5.2(i)]{DaPrato1992}. We treat the $g$ integral first as it is the more difficult of the two and is instructive. We in fact prove $L^2$ stochastic continuity. For $0 \leq t' < t \leq T$ we have by It\=o's isometry:
\begin{equation*}
\begin{split}
\Ebb &\left[ \left( \int_0^t S^b_{t-s} g(s,Z(s))dW(s) - \int_0^{t'} S^b_{t'-s} g(s,Z(s))dW(s) \right)^2 \right]\\
&= \Ebb \left[ \int_0^t \left\Vert \left( S^b_{t-s} - \1bb_{[0,t']}(s) S^b_{t'-s} \right) g(s,Z(s)) \right\Vert_{\Lcal_2(\Hcal,\Hcal)}^2 ds \right]\\
&\leq \int_0^t \left\Vert S^b_{t-s} - \1bb_{[0,t']}(s) S^b_{t'-s} \right\Vert_{\Lcal_2(\Hcal,\Hcal)}^2 \Ebb \left[ \Vert g(s,Z(s)) \Vert^2 \right] ds\\
&\leq 2(\Vert M \Vert_2^2 + C^2\vertiii{Z}_2^2) \int_0^t \left\Vert S^b_{t-s} - \1bb_{[0,t']}(s) S^b_{t'-s} \right\Vert_{\Lcal_2(\Hcal,\Hcal)}^2 ds,\\
\end{split}
\end{equation*}
and then
\begin{equation*}
\begin{split}
\int_0^t &\left\Vert S^b_{t-s} - \1bb_{[0,t']}(s) S^b_{t'-s} \right\Vert_{\Lcal_2(\Hcal,\Hcal)}^2 ds\\
&= \int_0^{t'} \left\Vert S^b_{t-s} - S^b_{t'-s} \right\Vert_{\Lcal_2(\Hcal,\Hcal)}^2 ds + \int_{t'}^t \left\Vert S^b_{t-s} \right\Vert_{\Lcal_2(\Hcal,\Hcal)}^2 ds\\
&= \int_0^{t'} \left\Vert S^b_{s + t - t'} - S^b_s \right\Vert_{\Lcal_2(\Hcal,\Hcal)}^2 ds + \int_0^{t - t'} \left\Vert S^b_s \right\Vert_{\Lcal_2(\Hcal,\Hcal)}^2 ds\\
&\leq \sum_{k=1}^\infty \int_0^T e^{-2\lambda^b_k s} \left( e^{-\lambda^b_k(t - t')} - 1 \right)^2 ds + \sum_{k=1}^\infty \int_0^{t - t'} e^{-2\lambda^b_k s} ds\\
\end{split}
\end{equation*}
which tends to $0$ as $t - t' \searrow 0$ by the dominated convergence theorem and Proposition \ref{spectra}. Thus we have proven $L^2$ stochastic continuity for the $g$ integral. Now $L^2$ stochastic continuity for the $f$ integral follows very similarly; we may even use the fact that $\Vert \Acal \Vert \leq \Vert \Acal \Vert_{\Lcal_2(\Hcal,\Hcal)}$ for all $\Acal \in \Lcal_2(\Hcal,\Hcal)$ to end up doing exactly the same calculation as the one above.
\end{proof}
\begin{thm}
Assume Hypothesis \ref{apphyp}. Then the SPDE \eqref{SPDEapp} has a unique mild solution $Y$ in $\Hscr_q$.

Let $U(t) = Y(t) - S^b_tY_0$ for $t \in [0,T]$. If $q > 2(d_H + 1)^2$ then $U$ has a version which is a continuous random field $u: \Omega \times [0,T] \times F \to \Rbb$ and has the following H\"{o}lder exponents:
\begin{enumerate}
\item $u$ is almost surely essentially $\left( \frac{1}{2} (d_H + 1)^{-1} - \frac{d_H + 1}{q} \right)$-H\"{o}lder continuous in $[0,T] \times F$ with respect to $R_\infty$,
\item For each $t \in [0,T]$, $u(t,\cdot)$ is almost surely essentially $\left( \frac{1}{2} - \frac{d_H}{q} \right)$-H\"{o}lder continuous in $F$ with respect to $R$,
\item For each $x \in F$, $u(\cdot,x)$ is almost surely essentially $\left( \frac{1}{2} (d_H + 1)^{-1} - \frac{1}{q} \right)$-H\"{o}lder continuous in $[0,T]$.
\end{enumerate}
\end{thm}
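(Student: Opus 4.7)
The plan is to first establish existence and uniqueness of a mild solution in $\Hscr_q$ via a standard contraction argument, and then to verify Hypothesis \ref{hyp1} for the shifted process $U(t) = Y(t) - S^b_t Y_0$ so that Theorem \ref{mainthm} may be applied directly.

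\textbf{Step 1 (existence and uniqueness).} I would take $\Kscr$ as defined in the excerpt; by Lemma \ref{welldefK} it maps $\Hscr_q$ into itself. To promote $\Kscr$ to a contraction, I would introduce on $\Hscr_q$ the equivalent norm
\[
\vertiii{Z}_{q,\gamma} := \sup_{t\in[0,T]} e^{-\gamma t}\,\Ebb[\Vert Z(t)\Vert_\mu^q]^{1/q}
\]
for a constant $\gamma > 0$ to be chosen. Using the Lipschitz bound in Hypothesis \ref{apphyp}(4), H\"older's inequality for the deterministic integral and \cite[Theorem 4.37]{DaPrato1992} for the stochastic integral (exactly as in the proof of Lemma \ref{welldefK}), one obtains an estimate of the shape
\[
e^{-\gamma t}\Ebb[\Vert \Kscr(Z_1)(t) - \Kscr(Z_2)(t)\Vert_\mu^q]^{1/q} \leq \alpha_3\left(\int_0^t e^{-\gamma(t-s)q}\,ds\right)^{1/q}\vertiii{Z_1 - Z_2}_{q,\gamma},
\]
plus an analogous term from the $f$-integral bounded in the same way, where $\alpha_3$ depends only on $T$, $C$ and $\int_0^T \Vert S^b_s\Vert_{\Lcal_2(\Hcal,\Hcal)}^2\,ds$ (finite by Proposition \ref{spectra} since $d_s<2$). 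For $\gamma$ chosen large enough, the integral factor is strictly less than $\alpha_3^{-1}$, making $\Kscr$ a strict contraction on $(\Hscr_q,\vertiii{\cdot}_{q,\gamma})$. The Banach fixed-point theorem then produces a unique mild solution $Y\in\Hscr_q$.

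\textbf{Step 2 (reduction to the abstract setting).} Define $U(t) = Y(t) - S^b_tY_0$ and set
\[
\beta_s := f(s,Y(s)), \qquad \sigma_s := g(s,Y(s)).
\]
Predictability of $\beta$ and $\sigma$ follows from Hypothesis \ref{apphyp}(2,3) and predictability of $Y$ (composing a $\Pcal_T\otimes\Bcal(\Hcal)$-measurable map with the predictable process $(t,\omega)\mapsto(t,Y(t,\omega))$). By construction $U$ satisfies the mild equation \eqref{SPDEmild} with these choices.

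\textbf{Step 3 (verification of Hypothesis \ref{hyp1}).} I would set $p := q/2$ so that the assumption $q > 2(d_H+1)^2$ becomes exactly $p > (d_H+1)^2$. The linear growth bound in Hypothesis \ref{apphyp}(4) gives
\[
\Vert \beta_s\Vert_\mu^{2p} + \Vert\sigma_s\Vert^{2p} \leq 2 \cdot 2^{2p-1}\bigl(M(s)^{2p} + C^{2p}\Vert Y(s)\Vert_\mu^{2p}\bigr),
\]
whence $\sup_{s\in[0,T]} \Ebb[\Vert\sigma_s\Vert^{2p}] \leq 2^{2p-1}(\Vert M\Vert_q^q + C^q\vertiii{Y}_q^q) < \infty$. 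For the $\beta$-term, H\"older's inequality in time gives
\[
\left(\int_0^T \Vert\beta_s\Vert_\mu^2\,ds\right)^p \leq T^{p-1}\int_0^T \Vert\beta_s\Vert_\mu^{2p}\,ds,
\]
and taking expectation together with the above pointwise bound and $2p = q$ yields the required finiteness. Thus Hypothesis \ref{hyp1} holds with the chosen $p$.

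\textbf{Step 4 (conclusion).} An application of Theorem \ref{mainthm} produces a continuous random field version $u$ of $U$ whose H\"older exponents are $\frac{1}{2}((d_H+1)^{-1}-(d_H+1)/p)$, $\frac{1}{2}(1-d_H/p)$ and $\frac{1}{2}((d_H+1)^{-1}-1/p)$; substituting $p = q/2$ gives exactly the three exponents in the statement.

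\textbf{Main obstacle.} The routine but slightly delicate step is Step 1: making the fixed-point contraction work globally on $[0,T]$ requires either the weighted norm above (with the right power $q$ inside the exponential weight) or an iterated-map argument with Gronwall-type bounds. Everything else is an essentially mechanical translation: Step 3 needs only the linear-growth bound and the identification $p = q/2$, and Step 4 is a direct invocation of the main theorem.
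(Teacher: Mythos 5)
Your proposal is correct and mirrors the paper's proof in all but one place. Steps 2--4 are essentially identical to what the paper does: set $\beta_s = f(s,Y(s))$, $\sigma_s = g(s,Y(s))$, verify Hypothesis \ref{hyp1} with $p = q/2$ by combining the linear-growth bound with H\"older's and Minkowski's inequalities, then invoke Theorem \ref{mainthm}.

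The genuine divergence is in Step 1. The paper proves existence and uniqueness by choosing $\tilde T$ small enough that $\Kscr$ is a contraction on $[0,\tilde T]$ (using that $t\mapsto\int_0^t\Vert S^b_s\Vert_{\Lcal_2(\Hcal,\Hcal)}^2\,ds$ is continuous and vanishes at $0$) and then gluing solutions across $[0,\tilde T],[\tilde T,2\tilde T],\ldots$. You instead make $\Kscr$ a global contraction on all of $[0,T]$ by passing to the equivalent weighted norm $\vertiii{\cdot}_{q,\gamma}$ and sending $\gamma\to\infty$. Both are completely standard and each is a little cleaner in its own way: interval-gluing avoids the need to re-derive the convolution estimates with an exponential weight, while the weighted-norm trick avoids the bookkeeping of matching initial conditions across subintervals and is easier to generalize. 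One small caveat for your write-up: the displayed factor $\bigl(\int_0^t e^{-\gamma(t-s)q}\,ds\bigr)^{1/q}$ is not quite the right shape for the stochastic convolution. By \cite[Theorem 4.37]{DaPrato1992} that term contributes $\bigl(\int_0^t e^{-2\gamma(t-s)}\Vert S^b_{t-s}\Vert^2_{\Lcal_2(\Hcal,\Hcal)}\,ds\bigr)^{1/2}$, which still tends to $0$ as $\gamma\to\infty$ (dominated convergence, since $\sum_k 1/(\gamma_0+\lambda^b_k)<\infty$ for any $\gamma_0>0$ as $d_s<2$), so the conclusion stands; you would want to record the correct form of both bounds in a final version.
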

\begin{proof}
For $Z_1, Z_2 \in \Hscr_q$, we see that
\begin{equation*}
\begin{split}
&\Kscr(Z_1)(t) - \Kscr(Z_2)(t)\\
&= \int_0^t S^b_{t-s} \left(f(s,Z_1(s)) - f(s,Z_2(s)) \right)ds + \int_0^t S^b_{t-s} \left(g(s,Z_1(s)) - g(s,Z_2(s)) \right) dW(s).
\end{split}
\end{equation*}
We proceed in  a similar way as in the proof of Lemma \ref{welldefK}:
\begin{equation*}
\begin{split}
\Ebb &\left[ \left\Vert \int_0^t S^b_{t-s} \left(f(s,Z_1(s)) - f(s,Z_2(s)) \right) ds \right\Vert_\mu^q \right]\\
&\leq t^{q-1} \Ebb \left[ \int_0^t \Vert f(s,Z_1(s)) - f(s,Z_2(s)) \Vert_\mu^q ds \right]\\
&\leq 2^{q-1}t^q C^q \vertiii{Z_1 - Z_2}_q^q
\end{split}
\end{equation*}
and by \cite[Theorem 4.37]{DaPrato1992} there exists a constant $k_q > 0$ such that
\begin{equation*}
\begin{split}
\Ebb& \left[ \left\Vert \int_0^t S^b_{t-s} \left(g(s,Z_1(s)) - g(s,Z_2(s)) \right)dW(s) \right\Vert_\mu^q \right]\\
&\leq k_q \left( \int_0^t \Ebb\left[ \Vert S^b_{t-s} \left(g(s,Z_1(s)) - g(s,Z_2(s)) \right) \Vert_{\Lcal_2(\Hcal,\Hcal)}^q \right]^\frac{2}{q} ds \right)^\frac{q}{2}\\
&\leq k_q C^q \left( \int_0^t \Ebb\left[ \Vert Z_1(s) - Z_2(s) \Vert_\mu^p \right]^\frac{2}{q} \Vert S^b_{t-s} \Vert_{\Lcal_2(\Hcal,\Hcal)}^2 ds \right)^\frac{q}{2}\\
&\leq k_q C^q \vertiii{Z_1 - Z_2}_q^q \left( \int_0^t \Vert S^b_s \Vert_{\Lcal_2(\Hcal,\Hcal)}^2 ds \right)^\frac{q}{2}.
\end{split}
\end{equation*}
Now $\int_0^t \Vert S^b_s \Vert_{\Lcal_2(\Hcal,\Hcal)}^2 ds = \sum_{i=1}^\infty \int_0^t e^{-2\lambda^b_i s} ds < \infty$ for all $t \in [0,\infty)$, see the proof of Lemma \ref{welldefK}. Therefore by the dominated convergence theorem, the map $t \mapsto \int_0^t \Vert S^b_s \Vert_{\Lcal_2(\Hcal,\Hcal)}^2 ds$ is continuous with $\lim_{t \to 0} \int_0^t \Vert S^b_s \Vert_{\Lcal_2(\Hcal,\Hcal)}^2 ds = 0$. We thus see that there exists an increasing continuous function $\alpha_3: [0,\infty) \to [0,\infty)$ independent of $Y_0$ with $\alpha_3(0) = 0$ and such that
\begin{equation*}
\vertiii{\Kscr(Z_1) - \Kscr(Z_2)}_q \leq \alpha_3(T) \vertiii{Z_1 - Z_2}_q
\end{equation*}
for all $Z_1, Z_2 \in \Hscr_q$. Therefore using Lemma \ref{welldefK}, if $\alpha_3(T) < 1$ then $\Kscr$ is a strict contraction on $\Hscr_q$ and so the SPDE \eqref{SPDEapp} has a unique mild solution in $[0,T]$ by the contraction mapping theorem. If this condition does not hold we simply take some $\tilde T > 0$ satisfying $\alpha_3(\tilde T) < 1$ and solve the SPDE in the intervals $[0,\tilde T]$, $[\tilde T, 2\tilde T]$, and so on (this is possible by Lemma \ref{welldefK} and the independence of $\alpha_3$ from $Y_0$). We have thus proven existence of a unique mild solution $Y$ to \eqref{SPDEapp} in $\Hscr_q$.

Now we assume $q > 2(d_H + 1)^2$ and set $U = Y - S^bY_0$. For $t \in [0,T]$, let $\beta_t = f(t,Y(t))$ and $\sigma_t = g(t,Y(t))$. Then we see that \eqref{SPDEmild} is satisfied. The solution process $Y$ is predictable, so $\beta$ and $\sigma$ immediately have the required measurability properties and so it remains to show that Hypothesis \ref{hyp1} holds. We have that
\begin{equation*}
\begin{split}
\Ebb\left[ \left( \int_0^T \Vert f(s,Y(s)) \Vert_\mu^2 ds \right)^\frac{q}{2} \right] &\leq \Ebb\left[ \left( \int_0^T (M(s) + C\Vert Y(s) \Vert_\mu)^2 ds \right)^\frac{q}{2} \right]\\
&\leq T^q \sup_{s \in [0,T]} \Ebb\left[(M(s) + C\Vert Y(s) \Vert_\mu)^q \right]\\
&< \infty
\end{split}
\end{equation*}
and
\begin{equation*}
\sup_{s \in [0,T]} \Ebb\left[ \Vert g(s,Y(s)) \Vert^q \right] \leq \sup_{s \in [0,T]} \Ebb\left[(M(s) + C\Vert Y(s) \Vert_\mu)^q \right] < \infty
\end{equation*}
so Theorem \ref{mainthm} can be used with $p = \frac{q}{2}$.
\end{proof}

\section{Application to a class of Walsh SPDEs}\label{applications2}

We consider the SPDE
\begin{equation}\label{WSPDE}
\begin{split}
\frac{\partial u}{\partial t}(t,x) &= \Delta_b u(t,x) + f(t,u(t,x)) + g(t,u(t,x))\xi(t,x),\\
u(0,x) &= u_0(x)
\end{split}
\end{equation}
for $(t,x) \in [0,T] \times F$, where $T > 0$ and $b \in 2^{F^0}$, $u_0: \Omega \times F \to \Rbb$, and $f,g$ are functions from $\Omega \times [0,T] 
\times F$ to $\Rbb$. We take $\xi$ to be an $\Fbb$-space-time white noise on $(F,\mu)$ (in the martingale measure sense of \cite{Walsh1986}). 
Without loss of generality we may assume that $\xi$ is the space-time white noise associated with the cylindrical Wiener process $W$ considered 
previously; that is, for all $h \in \Hcal$ and $t \geq 0$,
\begin{equation*}
\int_0^t \int_F h(y) \xi(s,y) \mu(dy) ds = \langle h, W(t) \rangle_\mu.
\end{equation*}
Recall that $\Pcal_T$ is the predictable $\sigma$-algebra on $\Omega \times [0,T]$ associated with the truncated filtration $(\Fcal_t: 0 \leq t \leq T)$. 
We now define the spaces in which we will look for solutions to \eqref{WSPDE}.
\begin{defn}
Let $q \geq 2$. Let $\Scal_q$ be the space of processes $v = \{ v(x): x \in F \}$ such that $v: \Omega \times F \to \Rbb$ is measurable from 
$\Fcal_0 \otimes \Bcal(F)$ into $\Bcal(\Rbb)$ and
\begin{equation*}
\Vert v \Vert_q := \sup_{x \in F} \Ebb\left[ |v(x)|^q \right]^\frac{1}{q} < \infty.
\end{equation*}
This can be shown to be a Banach space with the norm $\Vert \cdot \Vert_q$, if we identify processes $v_1, v_2$ such that $v_1(x) = v_2(x)$ almost 
surely for all $x \in F$. Evidently $\Scal_{q_1} \subseteq \Scal_{q_2}$ if $q_1 \geq q_2 \geq 2$.

Likewise for $T > 0$ let $\Scal_{q,T}$ be the space of processes $v = \{ v(t,x): (t,x) \in [0,T] \times F \}$ such that $v$ is predictable (which means 
that $v: \Omega \times [0,T] \times F \to \Rbb$ is measurable from $\Pcal_T \otimes \Bcal(F)$ into $\Bcal(\Rbb)$, see \cite{Walsh1986}) and such that
\begin{equation*}
\Vert v \Vert_{q,T} := \sup_{t \in [0,T]}\sup_{x \in F} \Ebb\left[ |v(t,x)|^q \right]^\frac{1}{q} < \infty.
\end{equation*}
This is likewise a Banach space with the norm $\Vert \cdot \Vert_{q,T}$, if we identify processes $v_1, v_2$ such that $v_1(t,x) = v_2(t,x)$ almost 
surely for all $(t,x) \in [0,T] \times F$. As before, $\Scal_{q_1,T} \subseteq \Scal_{q_2,T}$ if $q_1 \geq q_2 \geq 2$.
\end{defn}

\begin{hyp}\label{walshhyp}
We make the following assumptions. Suppose there exists $q \geq 2$ such that:
\begin{enumerate}
\item $u_0 \in \Scal_q$.
\item $f,g: \Omega \times [0,T] \times \Rbb \to \Rbb$ are functions which are measurable from $\Pcal_T \otimes \Bcal(\Rbb)$ into $\Bcal(\Rbb)$ and 
obey the following Lipschitz and linear growth conditions: There exists a constant $C > 0$ and a non-negative real predictable process $M: \Omega 
\times [0,T] \to \Rbb$ with
\begin{equation*}
\Vert M \Vert_{q,T} := \sup_{s \in [0,T]} \Ebb\left[ M(s)^q \right]^\frac{1}{q} < \infty
\end{equation*}
such that for all $(\omega,t) \in \Omega \times [0,T]$ and all $x,y \in \Rbb$,
\begin{equation*}
\begin{split}
|f(\omega,t,x) - f(\omega,t,y)| + |g(\omega,t,x) - g(\omega,t,y)| &\leq C|x - y|,\\
|f(\omega,t,x)| + |g(\omega,t,x)| &\leq M(\omega,t) + C|x|.
\end{split}
\end{equation*}
\end{enumerate}
\end{hyp}

We usually suppress the dependence of $f$, $g$ and $M$ on $\omega$. Note that we use the same notation $\Vert \cdot \Vert_{q,T}$ for $M$ and 
for elements of $\Scal_{q,T}$, but the meaning will be clear from context. For the sake of the following definitions we now give a (suboptimal) 
technical result on the growth of the eigenfunctions $\phi^b_k$. Compare \cite[Theorem 4.5.4]{Kigami2001}.

\begin{lem}\label{thm:efuncgrow}
If $b \in 2^{F^0}$, then $\sup_{x \in F}|\phi^b_k(x)| < \infty$ for all $k \geq 1$ and
\begin{equation*}
\sup_{x \in F}|\phi^b_k(x)| = O(k^\frac{1}{d_s})
\end{equation*}
as $k \to \infty$.
\end{lem}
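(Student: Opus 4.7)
The first assertion, that $\sup_{x \in F}|\phi^b_k(x)| < \infty$ for each $k$, is essentially automatic. Every eigenfunction $\phi^b_k$ lies in $\Dcal(\Delta_b) \subseteq \Dcal_b \subseteq \Dcal$, and Remark \ref{domcl} tells us that every element of $\Dcal$ is $\frac{1}{2}$-H\"older continuous with respect to $R$; since $(F,R)$ is compact this forces boundedness.

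For the growth rate, the plan is to turn the $\frac{1}{2}$-H\"older estimate into a sup-norm bound using the Dirichlet form energy of $\phi^b_k$, which is easy to compute: since $-\Delta_b$ is the self-adjoint operator associated with $(\Ecal, \Dcal_b)$ and $\Vert \phi^b_k \Vert_\mu = 1$, one has $\Ecal(\phi^b_k, \phi^b_k) = \langle -\Delta_b \phi^b_k, \phi^b_k \rangle_\mu = \lambda^b_k$. The inequality $|f(x) - f(y)|^2 \leq R(x,y)\Ecal(f,f)$ from Remark \ref{domcl}, applied to $f = \phi^b_k$, then gives
\[
|\phi^b_k(x) - \phi^b_k(y)|^2 \leq R(x,y)\, \lambda^b_k \leq \diam(F) \cdot \lambda^b_k
\]
for all $x, y \in F$, where $\diam(F)$ is the diameter of $F$ in the resistance metric (finite since $(F, R)$ is compact).

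To pass from this oscillation estimate to an absolute sup-norm bound I would integrate against $\mu$ in the variable $y$. Since $\sum_i r_i^{d_H} = 1$ the measure $\mu$ is a probability measure on $F$, so Jensen's inequality yields
\[
\left| \phi^b_k(x) - \int_F \phi^b_k \, d\mu \right| \leq \sqrt{\diam(F)\, \lambda^b_k}
\]
for all $x \in F$, while the mean is in turn controlled by $\bigl|\int_F \phi^b_k\, d\mu\bigr| \leq \Vert \phi^b_k \Vert_\mu = 1$ by Cauchy--Schwarz. Combining these two estimates and invoking the eigenvalue bound $\lambda^b_k \leq c_0' k^{2/d_s}$ from Proposition \ref{spectra} produces
\[
\sup_{x \in F} |\phi^b_k(x)| \leq 1 + \sqrt{c_0' \diam(F)}\, k^{1/d_s},
\]
which is the advertised $O(k^{1/d_s})$ bound.

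There is no real obstacle in this argument; the looseness is intrinsic to the method. A sharper $O(k^{1/2})$ bound, which is stronger here because $d_s \leq 2$, would instead require on-diagonal heat-kernel estimates $p^b_t(x,x) \leq Ct^{-d_s/2}$ together with optimisation of $|\phi^b_k(x)| \leq e^{\lambda^b_k t}p^b_{2t}(x,x)^{1/2}$ in $t$, and this is presumably what the authors have in mind when they describe the result as ``suboptimal''.
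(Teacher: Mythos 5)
Your proof is correct, and it lands on the same key estimate as the paper — namely $\sup_{x}|\phi^b_k(x)|^2 = O(\lambda^b_k)$, after which both arguments conclude by Proposition \ref{spectra} — but you derive that estimate from scratch while the paper simply cites \cite[Proposition 7.16(b)]{Barlow1998} for the bound $\phi^b_k(x)^2 \leq 2 + c\lambda^b_k$. Your derivation is the natural self-contained one: $\Ecal(\phi^b_k,\phi^b_k) = \lambda^b_k$ by orthonormality and the form--operator correspondence, the resistance-metric inequality $|f(x)-f(y)|^2 \leq R(x,y)\Ecal(f,f)$ controls the oscillation by $\sqrt{\diam(F)\,\lambda^b_k}$, and the $\mu$-mean is bounded by $1$ via Cauchy--Schwarz because $\mu$ is a probability measure. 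Indeed, squaring your bound
\[
\sup_{x\in F}|\phi^b_k(x)| \leq 1 + \sqrt{\diam(F)\,\lambda^b_k}
\]
and using $2\sqrt{ab}\leq a+b$ recovers exactly an estimate of the form $\phi^b_k(x)^2 \leq 2 + c\lambda^b_k$, so you have in effect reproved the statement Barlow supplies. The one cosmetic point: the passage from the pointwise oscillation bound to the bound on $|\phi^b_k(x) - \int\phi^b_k\,d\mu|$ is really just the triangle inequality for integrals rather than Jensen, though Jensen with the convex function $|\cdot|$ also applies, so nothing is wrong. Your closing remark about the sharper $O(k^{1/2})$ bound is also on target: $d_s \in [1,2)$ so $1/2 < 1/d_s$, and the on-diagonal heat-kernel argument you sketch is precisely the content of \cite[Theorem 4.5.4]{Kigami2001}, which the paper flags for comparison.
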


\begin{proof}
By \cite[Proposition 7.16(b)]{Barlow1998}, there exists $c > 0$ such that, if $b \in 2^{F^0}$ and $k \geq 1$, then for all $x \in F$,
\begin{equation*}
\phi^b_k(x)^2 \leq 2 + c \lambda^b_k.
\end{equation*}
Then Proposition \ref{spectra} implies the required result.
\end{proof}
\begin{defn}[Heat kernel]
Let $b \in 2^{F^0}$. For $(t,x,y) \in (0,\infty) \times F \times F$ let
\begin{equation*}
p^b_t(x,y) = \sum_{k=1}^\infty e^{-\lambda^b_k t} \phi^b_k(x) \phi^b_k(y).
\end{equation*}
This is non-negative and jointly continuous in $(0,\infty) \times F \times F$; see \cite[Definition A.2.12]{Kigami2001}. It is called the \textit{heat kernel} associated with the boundary condition $b$. Note the obvious symmetry in $(x,y)$. Observe that if $h \in \Hcal$ and $t,x \in (0,\infty) \times F$ then
\begin{equation*}
\int_F p^b_t(x,y) h(y) \mu(dy) = \langle p^b_t(x,\cdot) , h \rangle_\mu = S^b_t h(x),
\end{equation*}
which in particular implies that $p^b$ is the transition density of the diffusion $X^b$. Due to the above identity, for $t = 0$ we define
\begin{equation*}
\int_F p^b_0(x,y) h(y) \mu(dy) := h(x)
\end{equation*}
(so long as $h(x)$ is well-defined). Thus if $s \geq 0$ and $t > 0$ and $x,y \in F$ then we see that
\begin{equation*}
\int_F p^b_s(x,z) p^b_t(z,y) \mu(dz) = p^b_{s+t}(x,y).
\end{equation*}
\end{defn}
\begin{defn}
A \textit{mild solution} of the SPDE \eqref{WSPDE} is a predictable process $\{ u(t,x): (t,x) \in [0,\infty) \times F \}$ such that for each $(t,x) \in [0,\infty) \times F$ we have that
\begin{equation}\label{WSPDEmild}
\begin{split}
u(t,x) = &\int_F p^b_t(x,y) u_0(y) \mu(dy) + \int_0^t \int_F p^b_{t-s}(x,y) f(s,u(s,y)) \mu(dy)ds\\
&+ \int_0^t \int_F p^b_{t-s}(x,y) g(s,u(s,y)) \xi(s,y) \mu(dy)ds
\end{split}
\end{equation}
almost surely.
\end{defn}
\subsection{Existence and uniqueness}
We begin with a number of estimates that will be useful in the current and subsequent sections.
\begin{lem}\label{thm:hkestim}
The following estimates on the heat kernel $p^b$ hold for any $T > 0$:
\begin{enumerate}
\item There exists a constant $c_1(T) > 0$ such that for all $(t,x,y) \in (0,T] \times F \times F$ and any $b \in 2^{F^0}$,
\begin{equation*}
0 \leq p^b_t(x,y) \leq c_1(T) t^{-\frac{d_s}{2}}.
\end{equation*}
\item There exists a constant $c_2(T) > 0$ such that for all $(t,x,x',y) \in (0,T] \times F \times F \times F$ and any $b \in 2^{F^0}$,
\begin{equation*}
\left| p^b_t(x,y) - p^b_t(x',y) \right|^2 \leq c_2(T) R(x,x') t^{-1-\frac{d_s}{2}}.
\end{equation*}
\item There exists a constant $c_3(T) > 0$ such that for all $(s,t,x) \in (0,T] \times (0,T] \times F$ with $s \leq t$ and any $b \in 2^{F^0}$,
\begin{equation*}
\left| p^b_s(x,x) - p^b_t(x,x) \right| \leq c_3(T) \left( s^{-\frac{d_s}{2}} - t^{-\frac{d_s}{2}} \right).
\end{equation*}
\end{enumerate}
\end{lem}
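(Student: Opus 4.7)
The approach is to reduce each of the three bounds to the spectral expansion $p^b_t(x,y) = \sum_k e^{-\lambda^b_k t}\phi^b_k(x)\phi^b_k(y)$ combined with properties of the Dirichlet form $(\mathcal{E}, \mathcal{D}_b)$ and the resistance-metric structure.

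For part (1), I would first reduce to the diagonal case. Cauchy--Schwarz applied to the semigroup identity $p^b_t(x,y) = \int_F p^b_{t/2}(x,z) p^b_{t/2}(z,y) \mu(dz)$ yields $p^b_t(x,y)^2 \leq p^b_t(x,x) p^b_t(y,y)$, so it suffices to prove $p^b_t(x,x) \leq c_1(T) t^{-d_s/2}$. This on-diagonal estimate is the standard ultracontractivity bound for $(\mathcal{E}, \mathcal{D}_b)$ on $L^2(F,\mu)$, equivalent to a Nash inequality and a well-known consequence of the eigenvalue growth in Proposition \ref{spectra}; I would cite the heat-kernel estimates in \cite{Barlow1998} and \cite{Kigami2001} rather than redevelop this machinery.

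For part (2), I would apply the resistance-metric bound $|f(x) - f(x')|^2 \leq R(x,x') \mathcal{E}(f,f)$ from Remark \ref{domcl} with $f = p^b_t(\cdot, y) \in \mathcal{D}_b$, which reduces the problem to estimating the Dirichlet energy of $p^b_t(\cdot,y)$. Using the factorisation $p^b_t(\cdot,y) = S^b_{t/2} p^b_{t/2}(\cdot,y)$ together with $\mathcal{E}(f,f) = \|(-\Delta_b)^{1/2} f\|_\mu^2$, I get
\[
\mathcal{E}(p^b_t(\cdot,y), p^b_t(\cdot,y)) \leq \|(-\Delta_b)^{1/2} S^b_{t/2}\|^2 \cdot \|p^b_{t/2}(\cdot,y)\|_\mu^2.
\]
Spectral calculus on the self-adjoint operator $-\Delta_b$ gives $\|(-\Delta_b)^{1/2} S^b_{t/2}\| \leq \sup_{\lambda \geq 0}\sqrt{\lambda}\,e^{-\lambda t/2} \leq C t^{-1/2}$, while $\|p^b_{t/2}(\cdot,y)\|_\mu^2 = p^b_t(y,y) \leq c_1(T) t^{-d_s/2}$ by part (1). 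Multiplying these two factors yields the claim.

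For part (3) I would exploit the identity $-\frac{d}{dr} p^b_r(x,x) = \mathcal{E}(p^b_{r/2}(\cdot,x), p^b_{r/2}(\cdot,x))$, obtained by differentiating $p^b_r(x,x) = \|p^b_{r/2}(x,\cdot)\|_\mu^2$. The energy on the right is bounded by $C r^{-1-d_s/2}$ for $r \in (0,2T]$ by the argument of part (2), and integrating over $r \in [s,t]$ gives
\[
p^b_s(x,x) - p^b_t(x,x) \leq C \int_s^t r^{-1-d_s/2}\, dr = \frac{2C}{d_s}\bigl(s^{-d_s/2} - t^{-d_s/2}\bigr),
\]
as required. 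The hard part is really (1): a direct evaluation of the spectral series $\sum_k e^{-\lambda^b_k t}\phi^b_k(x)^2$ using the suboptimal eigenfunction bound of Lemma \ref{thm:efuncgrow} and integral comparison only produces the weaker estimate $O(t^{-d_s/2 - 1})$, so getting the sharp $t^{-d_s/2}$ behaviour forces one to invoke a Nash-type inequality, ultracontractivity, or the heat-kernel bounds of Kigami and Barlow; once that input is secured, (2) and (3) follow by spectral calculus and integration.
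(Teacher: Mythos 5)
Your proof is correct and follows essentially the same route as the paper. A few remarks. For part (1), you reduce to the diagonal via Cauchy--Schwarz and then invoke ultracontractivity, whereas the paper first uses the domain monotonicity $0 \le p^b_t \le p^N_t$ (from \cite[Theorem A.2.16]{Kigami2001}) and then directly cites the off-diagonal Neumann bound \cite[Theorem 5.3.1(1)]{Kigami2001} for $t\in(0,1]$, extending to $(0,T]$ by compactness; both routes ultimately defer to the same Kigami/Barlow heat-kernel input. Your remark that the diagonal bound is ``a well-known consequence of the eigenvalue growth in Proposition \ref{spectra}'' overstates the case --- eigenvalue asymptotics alone give a trace bound $\sum_k e^{-\lambda_k t} \asymp t^{-d_s/2}$, not the pointwise bound on $p^b_t(x,x)$ --- but you correctly retract this in your closing paragraph and fall back on citing the Nash inequality / Kigami--Barlow heat-kernel estimates, which is exactly the input the paper uses. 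For (2) and (3), your argument is the same as the paper's but spelled out: the paper compresses the step $\Ecal(S^b_{t/2} h, S^b_{t/2}h) \le C t^{-1}\|h\|_\mu^2$ into a citation of \cite[Lemma 1.3.3(i)]{Fukushima2011}, which is precisely the spectral-calculus bound $\sup_{\lambda\ge 0}\lambda e^{-\lambda t} \le Ct^{-1}$ that you write out; and the resistance-metric Lipschitz-in-energy estimate you use for (2) is the content of the method of \cite[Lemma 5.2]{Hambly1999} that the paper invokes. In part (3) your derivative identity $-\frac{d}{dr}p^b_r(x,x)=\Ecal(p^{b,x}_{r/2},p^{b,x}_{r/2})$ has the correct constant; the paper's displayed chain of equalities appears to drop a chain-rule factor of $\tfrac12$ and states $-2\Ecal$, though this only shifts the constant $c_3(T)$ and does not affect the result.
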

\begin{proof}
\begin{enumerate}
\item By \cite[Theorem A.2.16]{Kigami2001} we see that $0 \leq p^b_t(x,y) \leq p^N_t(x,y)$ for all $(t,x,y) \in (0,\infty) \times F \times F$ and $b \in 2^{F^0}$. Now by \cite[Theorem 5.3.1(1)]{Kigami2001} we see that there exists $c_1(1) > 0$ such that
\begin{equation*}
\sup_{x,y \in F} p^N_t(x,y) \leq c_1(1) t^{-\frac{d_s}{2}}
\end{equation*}
for $t \in (0,1]$. Finally if $T > 1$ then the fact \cite[Proposition 5.1.2(1)]{Kigami2001} that $p^N$ is continuous in the compact set $[1,T] \times F \times F$ (and hence bounded in this set) implies that there exists $c_1(T) > 0$ such that the required result holds.
\item For $t > 0$ and $x,y \in F$ define $p^{b,x}_t(y) := p^b_t(x,y)$. By (1),
\begin{equation*}
\Vert p^{b,x}_t \Vert_\mu^2 = \int_F p^b_t(x,y)^2 \mu(dy) = p^b_{2t}(x,x) \leq 2^{-\frac{d_s}{2}}c_1(2T)t^{-\frac{d_s}{2}}
\end{equation*}
for all $t \in (0,T]$, $x \in F$ and $b \in 2^{F^0}$. Then we simply follow the method of \cite[Lemma 5.2]{Hambly1999}, using \cite[Lemma 1.3.3(i)]{Fukushima2011}.
\item For any $x,y \in F$ and $b \in 2^{F^0}$, $p^b_t(x,y)$ is differentiable in $t$ with derivative $\Delta_b p^{b,x}_t(y)$. This can be shown using Lemma \ref{thm:efuncgrow} in the same way as the proof of \cite[Proposition 5.1.2(4)]{Kigami2001}. The regularity of $p^b_t(x,y)$ and $\Delta_b p^{b,x}_t(y)$ is enough (again using Lemma \ref{thm:efuncgrow}) that for $(t,x) \in (0,\infty) \times F$ we can differentiate under the integral thus:
\begin{equation*}
\begin{split}
\frac{\partial}{\partial t}p^b_t(x,x) &= \frac{\partial}{\partial t}\int_F p^b_\frac{t}{2}(x,y)^2 \mu(dy) = 2\int_F p^{b,x}_\frac{t}{2}(y) \frac{\partial}{\partial t} p^{b,x}_\frac{t}{2}(y) \mu(dy) \\
&= 2\int_F p^{b,x}_\frac{t}{2}(y)\Delta_b p^{b,x}_\frac{t}{2}(y) \mu(dy) = -2 \Ecal \left(p^{b,x}_\frac{t}{2},p^{b,x}_\frac{t}{2} \right) \leq 0,
\end{split}
\end{equation*}
so $p^b_t(x,x)$ is decreasing in $t$ for any $x \in F$. Then \cite[Lemma 1.3.3(i)]{Fukushima2011} and the estimate for $\Vert p^{b,x}_t \Vert_\mu^2$ in the proof of (2) imply that there exists $c_3'(T) > 0$ such that
\begin{equation*}
-c_3'(T) t^{-1-\frac{d_s}{2}} \leq \frac{\partial}{\partial t}p^b_t(x,x) \leq 0
\end{equation*}
for all $(t,x) \in (0,T] \times F$. Therefore if $s,t \in (0,T]$ with $s \leq t$ then
\begin{equation*}
\begin{split}
\left| p^b_s(x,x) - p^b_t(x,x) \right| &\leq c_3'(T) \int_s^t z^{-1-\frac{d_s}{2}}dz\\
&= c_3'(T)\frac{2}{d_s} \left( s^{-\frac{d_s}{2}} - t^{-\frac{d_s}{2}} \right).
\end{split}
\end{equation*}
\end{enumerate}
\end{proof}
\begin{prop}[Stochastic continuity]\label{thm:stocts}
Fix $q \geq 2$ and $T > 0$. Then there exists $c_6 > 0$ such that the following holds: Let $v_0 \in \Scal_{q,T}$ and define
\begin{equation*}
\begin{split}
v_1(t,x) &= \int_0^t \int_F p^b_{t-s}(x,y) g(s,v_0(s,y)) \xi(s,y) \mu(dy)ds,\\
v_2(t,x) &= \int_0^t \int_F p^b_{t-s}(x,y) f(s,v_0(s,y)) \mu(dy)ds
\end{split}
\end{equation*}
for $(t,x) \in [0,T] \times F$. Then $v_1$ and $v_2$ are well-defined and for all $s,t \in [0,T]$ and $x,y \in F$,
\begin{equation*}
\begin{split}
\Ebb \left[ \left| v_1(t,x) - v_1(t,y) \right|^q \right] &\leq c_6 (1 + \Vert v_0 \Vert_{q,T}^q) R(x,y)^\frac{q}{4},\\
\Ebb \left[ \left| v_1(s,x) - v_1(t,x) \right|^q \right] &\leq c_6 (1 + \Vert v_0 \Vert_{q,T}^q) |s - t|^{\frac{q}{2}(1 - \frac{d_s}{2})},\\
\Ebb \left[ \left| v_2(t,x) - v_2(t,y) \right|^q \right] &\leq c_6 (1 + \Vert v_0 \Vert_{q,T}^q) R(x,y)^\frac{q}{4},\\
\Ebb \left[ \left| v_2(s,x) - v_2(t,x) \right|^q \right] &\leq c_6 (1 + \Vert v_0 \Vert_{q,T}^q) |s - t|^{\frac{q}{2}(1 - \frac{d_s}{2})}.\\
\end{split}
\end{equation*}
\end{prop}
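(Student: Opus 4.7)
The plan is to follow the standard Walsh-type stochastic-continuity methodology, reducing all four estimates to two heat-kernel integral bounds
\[
I(t,x,x') := \int_0^t \int_F \bigl( p^b_{t-s}(x,y) - p^b_{t-s}(x',y) \bigr)^2 \mu(dy)\,ds \leq c\, R(x,x')^{1/2}
\]
and
\[
J(s,t,x) := \int_0^t \int_F \bigl( p^b_{t-s'}(x,y) - \1bb_{[0,s]}(s')\, p^b_{s-s'}(x,y) \bigr)^2 \mu(dy)\,ds' \leq c\,|t-s|^{1 - d_s/2}.
\]
From these, the $v_1$ estimates follow via the Burkholder-Davis-Gundy inequality for Walsh integrals, whereas the $v_2$ estimates follow via Cauchy-Schwarz; in both cases Minkowski's integral inequality (valid since $q/2 \geq 1$) combined with the linear growth of $f,g$ allows the factor $(\Vert M\Vert_{q,T} + C\Vert v_0\Vert_{q,T})^q \leq c(1 + \Vert v_0\Vert_{q,T}^q)$ to be pulled out. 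Well-definedness of $v_1,v_2$ reduces to the fact that $\int_0^t p^b_{2(t-s)}(x,x)\,ds \leq c_1(T)\int_0^t (t-s)^{-d_s/2}\,ds < \infty$, using Lemma \ref{thm:hkestim}(1) and $d_s < 2$.

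To establish the bound on $I(t,x,x')$, I would interpolate the two heat-kernel estimates via
\[
\bigl(p^b_{t-s}(x,y) - p^b_{t-s}(x',y)\bigr)^2 \leq \Bigl( \sup_{z \in F} \bigl|p^b_{t-s}(x,z) - p^b_{t-s}(x',z)\bigr| \Bigr) \cdot \bigl|p^b_{t-s}(x,y) - p^b_{t-s}(x',y)\bigr|.
\]
Lemma \ref{thm:hkestim}(2) bounds the supremum by $c_2(T)^{1/2} R(x,x')^{1/2} (t-s)^{-(1+d_s/2)/2}$, while the sub-Markovian property $\int_F p^b_{t-s}(x,y)\mu(dy) \leq 1$ gives $\int_F |p^b_{t-s}(x,y) - p^b_{t-s}(x',y)|\mu(dy) \leq 2$. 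Since $(1+d_s/2)/2 < 1$, integrating the product in $s$ converges and yields the claimed $R(x,x')^{1/2}$ bound.

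For $J(s,t,x)$ I would split the integral at $s$. The tail piece equals $\int_0^{t-s} p^b_{2u}(x,x)\,du$, which Lemma \ref{thm:hkestim}(1) bounds by $c(t-s)^{1-d_s/2}$. For the head piece I would use the identity
\[
\int_F (p^b_a(x,y) - p^b_b(x,y))^2 \mu(dy) = p^b_{2a}(x,x) - 2 p^b_{a+b}(x,x) + p^b_{2b}(x,x)
\]
together with the monotonicity of $u \mapsto p^b_u(x,x)$ (used in the proof of Lemma \ref{thm:hkestim}(3)) to dominate the second difference by the first difference $p^b_{2(s-s')}(x,x) - p^b_{2(t-s')}(x,x)$. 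Applying Lemma \ref{thm:hkestim}(3) and integrating in $s'$ yields, with $\beta = 1 - d_s/2 \in (0,1]$, a bound of the form $c\bigl[(2s)^\beta + (2(t-s))^\beta - (2t)^\beta\bigr]$; subadditivity of $x \mapsto x^\beta$ then collapses this to $c(t-s)^{1-d_s/2}$.

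The main obstacle is the temporal bound on $J$: the cross term $p^b_{a+b}(x,x)$ blocks a direct application of Lemma \ref{thm:hkestim}(3), and the fix relies on the pointwise monotonicity of $p^b_u(x,x)$ to reduce a second difference to a first difference, followed by the concave inequality $(a+b)^\beta \leq a^\beta + b^\beta$ to cancel the $(2t)^\beta$ term. Once $I$ and $J$ are controlled, the stochastic integral bounds rest on BDG plus Minkowski's integral inequality (which pulls $\Ebb[g(s,v_0(s,y))^q]^{2/q}$ inside the $\mu(dy)\,ds$-integral and then uses the growth estimate), and the deterministic integral bounds follow directly by Cauchy-Schwarz applied to $\int_0^t\int_F |p-p|\cdot|f|\,\mu(dy)\,ds$ followed by Minkowski again.
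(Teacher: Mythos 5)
Your proof is correct and closely parallels the paper's. The structure is identical: reduce each estimate to heat-kernel integral bounds, apply Burkholder--Davis--Gundy for $v_1$ (respectively Jensen/Cauchy--Schwarz for $v_2$), pull out the linear-growth factor via Minkowski's integral inequality, and control the temporal integral by splitting at the earlier time and applying Lemma \ref{thm:hkestim}(1) to the tail and (3) to the head together with monotonicity of $u \mapsto p^b_u(x,x)$. The one structural variant is in the spatial bound: the paper applies the Chapman--Kolmogorov identity
\[
\int_F \bigl( p^b_a(x,y) - p^b_a(x',y) \bigr)^2 \mu(dy) = p^b_{2a}(x,x) - 2 p^b_{2a}(x,x') + p^b_{2a}(x',x')
\]
and then bounds the second difference by two applications of Lemma \ref{thm:hkestim}(2) to first differences, whereas you interpolate pointwise via $(a-b)^2 \leq \sup|a-b| \cdot |a-b|$ and then use $\int_F p^b_t(x,\cdot)\,d\mu \leq 1$; both routes give the same $R(x,x')^{1/2}(t-s)^{-1/2 - d_s/4}$ shape, and your version dispenses with the semigroup identity at the cost of a slightly rougher constant. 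One small slip in the write-up: what collapses $(2s)^\beta + (2(t-s))^\beta - (2t)^\beta$ to $(2(t-s))^\beta$ is monotonicity of $x \mapsto x^\beta$ (namely $(2s)^\beta \leq (2t)^\beta$ for $s < t$), not subadditivity; subadditivity only yields that the expression is non-negative. The paper uses the same monotonicity step. Otherwise the argument is sound.
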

\begin{proof}
We note that $v_1$ and $v_2$ are well-defined by the assumption on $v_0$ and the regularity of $p^b$ (Lemma \ref{thm:hkestim}(1)) and $f,g$. We now prove the spatial estimate for $v_1$. By the Burkholder-Davis-Gundy inequality, there exists a universal constant $C_q > 0$ such that
\begin{equation*}
\begin{split}
\Ebb &\left[ \left| v_1(t,x) - v_1(t,x') \right|^q \right] \\
&= \Ebb\left[ \left| \int_0^t \int_F \left( p^b_{t-s}(x,y) - p^b_{t-s}(x',y) \right) g(s,v_0(s,y)) \xi(s,y) \mu(dy)ds \right|^q \right]\\
&\leq C_q\Ebb\left[ \left| \int_0^t \int_F \left( p^b_{t-s}(x,y) - p^b_{t-s}(x',y) \right)^2 g(s,v_0(s,y))^2 \mu(dy)ds \right|^\frac{q}{2} \right]\\
&\leq C_q\left| \int_0^t \int_F \left( p^b_{t-s}(x,y) - p^b_{t-s}(x',y) \right)^2 \Ebb\left[ |g(s,v_0(s,y))|^q \right]^\frac{2}{q} \mu(dy)ds \right|^\frac{q}{2}\\
\end{split}
\end{equation*}
where in the last line we have used the Minkowski integral inequality. Now we have that $|g(s,v_0(s,y))|^q \leq(M(s) + C |v_0(s,y)|)^q \leq 2^{q-1}(M(s)^q + C^q|v_0(s,y)|^q)$ so using Lemma \ref{thm:hkestim}(2),
\begin{equation*}
\begin{split}
\Ebb &\left[ \left| v_1(t,x) - v_1(t,x') \right|^q \right] \\
&\leq 2^{q-1}C_q (\Vert M \Vert_{q,T}^q + C^q \Vert v_0 \Vert_{q,T}^q) \left| \int_0^t \int_F \left( p^b_{t-s}(x,y) - p^b_{t-s}(x',y) \right)^2 \mu(dy)ds \right|^\frac{q}{2}\\
&\leq 2^{q-1}C_q (\Vert M \Vert_{q,T}^q + C^q \Vert v_0 \Vert_{q,T}^q) \left| \int_0^t \left( p^b_{2(t-s)}(x,x) - 2 p^b_{2(t-s)}(x,x') + p^b_{2(t-s)}(x',x') \right) ds \right|^\frac{q}{2}\\
&\leq 2^{\frac{q}{4}(5-d_s)} c_2(2T)^\frac{q}{4} C_q (\Vert M \Vert_{q,T}^q + C^q \Vert v_0 \Vert_{q,T}^q) \left( \int_0^t (t-s)^{-\frac{1}{2} - \frac{d_s}{4}} ds \right)^\frac{q}{2} R(x,x')^\frac{q}{4}\\
&\leq 2^{\frac{q}{4}(5-d_s)} c_2(2T)^\frac{q}{4} C_q (\Vert M \Vert_{q,T}^q + C^q \Vert v_0 \Vert_{q,T}^q) \left( \int_0^T s^{-\frac{1}{2} - \frac{d_s}{4}} ds \right)^\frac{q}{2} R(x,x')^\frac{q}{4}\\
\end{split}
\end{equation*}
and the integral is finite since $d_s < 2$. Now we take care of the temporal estimate for $v_1$: Let $t,t' \in [0,T]$ with $t < t'$. Then as before,
\begin{equation*}
\begin{split}
\Ebb &\left[ \left| v_1(t,x) - v_1(t',x) \right|^q \right] \\
&\leq C_q\Ebb\left[ \left| \int_0^{t'} \int_F \left( p^b_{t-s}(x,y)\1bb_{\{s < t\}} - p^b_{t'-s}(x,y) \right)^2 g(s,v_0(s,y))^2 \mu(dy)ds \right|^\frac{q}{2} \right]\\
&\leq C_q\left| \int_0^{t'} \int_F \left( p^b_{t-s}(x,y)\1bb_{\{s < t\}} - p^b_{t'-s}(x,y) \right)^2 \Ebb\left[ |g(s,v_0(s,y))|^q \right]^\frac{2}{q} \mu(dy)ds \right|^\frac{q}{2}\\
&\leq 2^{q-1}C_q (\Vert M \Vert_{q,T}^q + C^q \Vert v_0 \Vert_{q,T}^q) \left| \int_0^{t'} \int_F \left( p^b_{t-s}(x,y)\1bb_{\{s < t\}} - p^b_{t'-s}(x,y) \right)^2 \mu(dy)ds \right|^\frac{q}{2}\\
&\leq 2^{q-1}C_q (\Vert M \Vert_{q,T}^q + C^q \Vert v_0 \Vert_{q,T}^q) \left| \int_0^{t'} \int_F \left( p^b_{t-s}(x,y)\1bb_{\{s < t\}} - p^b_{t'-s}(x,y) \right)^2 \mu(dy)ds \right|^\frac{q}{2}.
\end{split}
\end{equation*}
The integral in the final line above must be split into the sum of its parts $s < t$ and $s \geq t$. The first part is
\begin{equation*}
\begin{split}
\int_0^t \int_F &\left( p^b_{t-s}(x,y) - p^b_{t'-s}(x,y) \right)^2 \mu(dy)ds \\
&= \int_0^t \int_F \left( p^b_s(x,y) - p^b_{s + t' - t}(x,y) \right)^2 \mu(dy)ds\\
&= \int_0^t\left( p^b_{2s}(x,x) - 2 p^b_{2s + t' - t}(x,x) + p^b_{2(s + t' - t)}(x,x) \right) ds\\
&\leq \int_0^t\left( p^b_{2s}(x,x) - 2 p^b_{2s + t' - t}(x,x) + p^b_{2(s + t' - t)}(x,x) \right) ds\\
&\leq 2^{-\frac{d_s}{2}} c_3(2T)\int_0^t\left( s^{-\frac{d_s}{2}} - (s + t' - t)^{-\frac{d_s}{2}} \right) ds\\
&= 2^{-\frac{d_s}{2}} \left(1 - \frac{d_s}{2} \right)^{-1} c_3(2T) \left( t^{1-\frac{d_s}{2}} - (t')^{1-\frac{d_s}{2}} + (t' - t)^{1-\frac{d_s}{2}} \right)\\
&\leq 2^{-\frac{d_s}{2}} \left(1 - \frac{d_s}{2} \right)^{-1} c_3(2T) (t' - t)^{1-\frac{d_s}{2}}\\
\end{split}
\end{equation*}
where we have used Lemma \ref{thm:hkestim}(3). The second part is
\begin{equation*}
\begin{split}
\int_t^{t'} \int_F p^b_{t'-s}(x,y)^2 \mu(dy)ds &= \int_0^{t'-t} p^b_{2s}(x,x) ds\\
&\leq 2^{-\frac{d_s}{2}} c_1(2T) \int_0^{t'-t} s^{-\frac{d_s}{2}} ds\\
&\leq 2^{-\frac{d_s}{2}} c_1(2T) \left( 1 - \frac{d_s}{2} \right)^{-1} (t' - t)^{1-\frac{d_s}{2}}\\
\end{split}
\end{equation*}
where we have used Lemma \ref{thm:hkestim}(1). Together these give us the temporal estimate for $v_1$.

The respective estimates for $v_2$ can be found similarly, though they are generally easier as there is no noise to deal with. In particular, we use Jensen's instead of the Burkholder-Davis-Gundy inequality.
\end{proof}
\begin{cor}\label{thm:pred}
Fix $q \geq 2$ and $T > 0$. Let $v_0 \in \Scal_{q,T}$ and define $v_1, v_2$ as in Proposition \ref{thm:stocts}. Then $v_1,v_2 \in \Scal_{q,T}$.
\end{cor}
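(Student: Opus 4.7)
The plan has two parts: verify that $\Vert v_i \Vert_{q,T} < \infty$, and verify that $v_1, v_2$ admit $\Pcal_T \otimes \Bcal(F)$-measurable versions. For the norm bound, observe that $v_1(0, x) = v_2(0, x) = 0$ for all $x \in F$, since both integrals vanish at $t = 0$. Applying the temporal estimates of Proposition \ref{thm:stocts} to the pair $(0, t)$ then yields
\begin{equation*}
\Ebb\left[|v_i(t, x)|^q\right] \leq c_6 \left(1 + \Vert v_0 \Vert_{q,T}^q\right) T^{\frac{q}{2}(1 - \frac{d_s}{2})}
\end{equation*}
for $i = 1, 2$ and all $(t, x) \in [0, T] \times F$. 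Taking the supremum and the $q$-th root gives $\Vert v_i \Vert_{q,T} < \infty$.

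For the measurability of $v_2$, the integrand $(\omega, s, y, t, x) \mapsto \1bb_{\{s \leq t\}}\, p^b_{t-s}(x, y) f(s, v_0(s, y))$ is jointly measurable in all variables, using the predictability of $v_0$, the measurability assumption on $f$ in Hypothesis \ref{walshhyp}, and the joint continuity of $p^b$ on $(0, \infty) \times F \times F$. Fubini then produces a jointly measurable version of $v_2(\omega, t, x)$, and the stochastic continuity in $t$ from the temporal estimate of Proposition \ref{thm:stocts}, combined with adaptedness, upgrades this to a predictable version via standard arguments. The harder case is $v_1$: for each fixed $(t, x)$, Walsh's theory defines $v_1(t, x)$ as a stochastic integral (the integrand $\1bb_{[0,t]}(s) p^b_{t-s}(x, y) g(s, v_0(s, y))$ is predictable and square-integrable by Lemma \ref{thm:hkestim}(1) combined with the linear growth of $g$). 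The $L^q$-continuity of $v_1$ in $(t, x)$ from Proposition \ref{thm:stocts}, together with the separability of $[0, T] \times F$, then allows the construction of a single $\Pcal_T \otimes \Bcal(F)$-measurable version, obtained by approximating $v_1(t, x)$ in $L^q$ along a dense countable grid via piecewise-constant predictable processes.

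The main obstacle is the joint measurability of $v_1$: Walsh's stochastic integration only produces $v_1(t, x)$ as an $L^2(\Omega)$ random variable pointwise in $(t, x)$, so patching these into a single $\Pcal_T \otimes \Bcal(F)$-measurable process requires the quantitative continuity estimates of Proposition \ref{thm:stocts}. These estimates bridge the gap by ensuring that $v_1$ is sufficiently regular in $(t, x)$ that a dense-grid approximation converges in $L^q$, yielding the desired jointly measurable, predictable version and completing the membership $v_1, v_2 \in \Scal_{q,T}$.
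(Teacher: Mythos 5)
Your proof is correct and follows essentially the same route as the paper's: establish the norm bound by setting $s=0$ in the temporal estimate of Proposition~\ref{thm:stocts} (noting $v_i(0,\cdot)=0$), and use the uniform stochastic continuity together with a discretization argument to produce a predictable version. The one minor variation is your treatment of $v_2$ via Fubini--Tonelli on the jointly measurable integrand; the paper instead obtains joint $\Omega\times F$-measurability for each fixed $t$ (for both $v_1$ and $v_2$ simultaneously) by approximating $p^b$ from below by finite linear combinations of indicator rectangles, then discretizes only in time. Both arguments work; the Fubini route is arguably cleaner for the Lebesgue integral $v_2$ but, as you correctly note, cannot be used for the stochastic integral $v_1$, where the grid approximation controlled by the $L^q$ continuity estimates is the essential tool.
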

\begin{proof}
The estimates found in Proposition \ref{thm:stocts} show us that $v_1$ and $v_2$ are uniformly stochastically continuous on $[0,T] \times F$. For each fixed $t \in [0,T]$, $v_1(t,\cdot)$ and $v_2(t,\cdot)$ are jointly $\Omega \times F$-measurable, which we can check by approximating the continuous map $(s,x,y) \mapsto p^b_{t-s}(x,y)$ pointwise from below by a sequence of finite positive linear combinations of rectangles $(s,x,y) \mapsto \1bb_{A_1}(s))\1bb_{A_2}(x)\1bb_{A_3}(y)$ such that $A_1 \times A_2 \times A_3 \subseteq  [0,t) \times F \times F$.

Since $v_1$ and $v_2$ are also both evidently adapted, a standard argument (see for example \cite[Proposition 3.7(ii)]{DaPrato1992}) shows that they have predictable versions. As a rough sketch, for $n \geq 1$ let
\begin{equation*}
v_1^n(t,x) = \sum_{i=0}^{2^n-1} v_1\left(\frac{i}{2^n}T,x\right)\1bb_{(\frac{i}{2^n}T,\frac{i+1}{2^n}T]}(t)
\end{equation*}
for $(t,x) \in [0,T] \times F$. Then each $v_1^n$ is predictable and we can show that $\Vert v_1^n - v_1 \Vert_{q,T} \to 0$ as $n \to \infty$.

It remains to show that $\Vert v_i \Vert_{q,T} < \infty$ for $i = 1,2$. This is easy to see by setting $s=0$ in Proposition \ref{thm:stocts}, since $v_i(0,\cdot) = 0$.
\end{proof}
To prove existence and uniqueness we follow closely the methods of \cite[Theorem 3.2]{Walsh1986} and \cite[Theorem 5.5]{Khoshnevisan2014}.
\begin{thm}[Existence and uniqueness]\label{thm:exist}
Assume Hypothesis \ref{walshhyp}. Then the SPDE \eqref{WSPDE} has a unique mild solution $u$ in $\Scal_{q,T}$.
\end{thm}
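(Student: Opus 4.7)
The plan is to apply the Banach fixed point theorem to an integral operator on $\Scal_{q,T}$ whose unique fixed point will be the mild solution of \eqref{WSPDE}. Define $\Kscr : \Scal_{q,T} \to \Scal_{q,T}$ by setting $\Kscr(v)(t,x)$ equal to the right-hand side of \eqref{WSPDEmild} with $u$ replaced by $v$. The target theorem then reduces to: (i) $\Kscr$ is well-defined as a self-map of $\Scal_{q,T}$; (ii) after possibly shrinking $T$, $\Kscr$ is a strict contraction, so the fixed point is unique and exists; (iii) the local-in-time solution can be patched together to give one on all of $[0,T]$.

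First I would verify $\Kscr$ maps $\Scal_{q,T}$ into itself. The initial-condition piece $(t,x) \mapsto \int_F p^b_t(x,y) u_0(y)\mu(dy)$ is $\Fcal_0$-measurable and jointly measurable in $(t,x)$, hence predictable; its $\Scal_{q,T}$-norm is bounded by $\Vert u_0 \Vert_q$ via Jensen's inequality applied to the sub-Markov kernel $p^b_t(x,\cdot)\mu$ (whose total mass is at most $1$, by domination of $p^b$ against $p^N$ as in Lemma \ref{thm:hkestim}). Hypothesis \ref{walshhyp} guarantees that $(s,y) \mapsto f(s,v(s,y))$ and $(s,y) \mapsto g(s,v(s,y))$ lie in $\Scal_{q,T}$ with norm bounded by $\Vert M \Vert_{q,T} + C\Vert v \Vert_{q,T}$, so Corollary \ref{thm:pred} puts both integral terms into $\Scal_{q,T}$.

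Next I would prove the contraction estimate. For $v_1,v_2 \in \Scal_{q,T'}$ the initial-condition pieces cancel and the Lipschitz bounds $|f(s,v_1)-f(s,v_2)|,|g(s,v_1)-g(s,v_2)| \leq C|v_1-v_2|$ apply. For the stochastic integral part, the Burkholder--Davis--Gundy inequality followed by Minkowski's integral inequality yields
\begin{equation*}
\Ebb\!\left[\left| \int_0^t\!\!\int_F p^b_{t-s}(x,y)[g(s,v_1(s,y))-g(s,v_2(s,y))]\xi(s,y)\mu(dy)\,ds \right|^q\right]^{2/q} \leq C_q' C^2 \Vert v_1-v_2\Vert_{q,T'}^2 \int_0^t\!\!\int_F p^b_{t-s}(x,y)^2 \mu(dy)\,ds,
\end{equation*}
and the semigroup identity $\int_F p^b_{t-s}(x,y)^2\mu(dy) = p^b_{2(t-s)}(x,x)$ together with Lemma \ref{thm:hkestim}(1) bounds the inner integral by a multiple of $(t-s)^{-d_s/2}$, which is integrable on $[0,T']$ since $d_s<2$. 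A simpler Jensen argument on the drift integral produces a bound in $T' \Vert v_1-v_2\Vert_{q,T'}$. Taking the supremum over $(t,x) \in [0,T']\times F$ gives
\begin{equation*}
\Vert \Kscr(v_1) - \Kscr(v_2) \Vert_{q,T'} \leq \eta(T')\,\Vert v_1-v_2\Vert_{q,T'},
\end{equation*}
with $\eta(T')\to 0$ as $T'\to 0$. Choose $T'$ so that $\eta(T')<1$; the Banach fixed point theorem then provides a unique mild solution on $[0,T']$. Since $\eta$ depends only on $C$, $q$, $T$ and the heat kernel (not on the initial data), the argument can be iterated on $[kT',(k+1)T']$ using the terminal value $u(kT',\cdot)$ (which lies in $\Scal_q$ by the estimates of Proposition \ref{thm:stocts} on the previous interval) as the new initial condition, exhausting $[0,T]$ in finitely many steps. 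Uniqueness on the full interval follows from uniqueness on each sub-interval.

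The main obstacle is the contraction bound for the stochastic integral, where one must pass from the Burkholder--Davis--Gundy inequality to a deterministic kernel bound uniform in $x \in F$; this is precisely where the on-diagonal heat kernel estimate and the constraint $d_s<2$ are used. Once this is established, the rest is the standard fixed-point loop plus routine patching.
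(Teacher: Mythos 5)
Your argument is correct, but it takes a genuinely different route from the paper. The paper does \emph{not} shrink the time horizon: it proves uniqueness directly via a singular Gronwall inequality (with the weakly singular kernel $(t-s)^{-d_s/2}$ handled by \cite[Lemma 3.3]{Walsh1986}), and proves existence by Picard iteration $u_{n+1} = \Kscr(u_n)$ on all of $[0,T]$, showing the iterates are Cauchy because after $k$ convolutions of the singular kernel one obtains a non-singular kernel with factorial decay of the prefactors. You instead make $\Kscr$ a strict contraction on a short interval $[0,T']$ by exploiting the fact that $\int_0^{T'} s^{-d_s/2}\,ds \to 0$, then patch. Both are standard and both work; the paper's choice avoids the patching step, while yours avoids invoking the iterated-kernel machinery. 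One small wrinkle to tighten in your patching step: the space $\Scal_q$ as defined in the paper requires the initial datum to be $\Fcal_0\otimes\Bcal(F)$-measurable, whereas the terminal value $u(kT',\cdot)$ from the previous block is only $\Fcal_{kT'}\otimes\Bcal(F)$-measurable. You need to either shift the filtration origin to $kT'$ (and work with predictability relative to $(\Fcal_{kT'+t})_t$), or state and use a slightly generalized Hypothesis \ref{walshhyp} allowing $\Fcal_{t_0}$-measurable initial data; the estimates of Proposition \ref{thm:stocts} do indeed carry over. This is routine bookkeeping, but as written the claim that $u(kT',\cdot) \in \Scal_q$ is literally false. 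The rest---self-mapping via Corollary \ref{thm:pred}, the BDG/Minkowski contraction estimate using $\int_F p^b_{t-s}(x,y)^2\mu(dy)=p^b_{2(t-s)}(x,x)$ and Lemma \ref{thm:hkestim}(1), and the observation that the contraction factor is independent of the initial data---is sound and closely parallels the paper's ingredients.
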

\begin{proof}
\textit{Uniqueness}: We may assume without loss of generality that $q = 2$. Suppose that $u_1, u_2 \in \Scal_{2,T}$ are both mild solutions to \eqref{WSPDE}. Let $v = u_1 - u_2 \in \Scal_{2,T}$. For $(t,x) \in [0,T] \times F$ let $G(t) = \sup_{x \in F} \Ebb\left[ v(t,x)^2 \right]$. Note that $G$ is non-negative and bounded in $[0,T]$ since $v \in \Scal_{2,T}$. Then for $(t,x) \in [0,T] \times F$ we have by the Burkholder-Davis-Gundy inequality that
\begin{equation*}
\begin{split}
\Ebb\left[ v(t,x)^2 \right] \leq & 2T \Ebb \left[\int_0^t \int_F p^b_{t-s}(x,y)^2 (f(s,u_1(s,y)) - f(s,u_2(s,y)))^2 \mu(dy)ds \right]\\
&+ 2 \Ebb\left[ \int_0^t \int_F p^b_{t-s}(x,y)^2 (g(s,u_1(s,y)) - g(s,u_2(s,y)))^2 \mu(dy)ds \right]
\end{split}
\end{equation*}
so that
\begin{equation*}
\begin{split}
\Ebb\left[ v(t,x)^2 \right] &\leq 2C^2(T+1) \Ebb\left[\int_0^t \int_F p^b_{t-s}(x,y)^2 (u_1(s,y) - u_2(s,y))^2 \mu(dy)ds \right]\\
&\leq 2C^2(T+1) \int_0^t G(s) \int_F p^b_{t-s}(x,y)^2 \mu(dy)ds\\
&= 2C^2(T+1) \int_0^t G(s) p^b_{2(t-s)}(x,x) ds\\
\end{split}
\end{equation*}
and thus by Lemma \ref{thm:hkestim}(1), for all $t \in [0,T]$,
\begin{equation*}
G(t) \leq 2^{1-\frac{d_s}{2}}C^2(T+1)c_1(2T) \int_0^t G(s) (t-s)^{-\frac{d_s}{2}} ds.
\end{equation*}
Now $d_s < 2$, so by \cite[Lemma 3.3]{Walsh1986} we have that $G(t) = 0$ for all $t \in [0,T]$. This concludes the proof of uniqueness.

\textit{Existence}: We find a solution in $\Scal_{q,T}$ by Picard iteration. Let $u_1 = 0 \in \Scal_{q,T}$, then for $n \geq 1$ we would like to let $u_{n+1} = \{ u_{n+1}(t,x): (t,x) \in [0,T] \times F \}$ be the predictable version (using Proposition \ref{thm:stocts}) of the process defined by
\begin{equation}\label{induct}
\begin{split}
u_{n+1}(t,x) = &\int_F p^b_t(x,y) u_0(y) \mu(dy) + \int_0^t \int_F p^b_{t-s}(x,y) f(s,u_n(s,y)) \mu(dy)ds\\
&+ \int_0^t \int_F p^b_{t-s}(x,y) g(s,u_n(s,y)) \xi(s,y) \mu(dy)ds.
\end{split}
\end{equation}
We need to show that this sequence is well-defined. By Proposition \ref{thm:stocts} it will be enough to prove that if $u_n \in \Scal_{q,T}$ for some $n$ and $u_{n+1}$ is defined as in \eqref{induct}, then $u_{n+1} \in \Scal_{q,T}$. We do this by induction on $n$. Assume that $u_n \in \Scal_{q,T}$ and define $u_{n+1}$ as in \eqref{induct}. By Proposition \ref{thm:stocts} and Corollary \ref{thm:pred}, the second and third term on the right-hand side of \eqref{induct} are well-defined elements of $\Scal_{q,T}$, which leaves us to deal with the first term $\int_F p^b_t(x,y) u_0(y) \mu(dy)$. This is trivially predictable as it is $\Fcal_0$-measurable and, by dominated convergence using Lemma \ref{thm:hkestim}, it is almost surely continuous in $(0,T] \times F$. Finally using Minkowski's integral inequality,
\begin{equation}\label{eqn:heatMinkow}
\begin{split}
\Ebb \left[ \left|\int_F p^b_t(x,y) u_0(y) \mu(dy) \right|^q \right] &\leq \left(\int_F p^b_t(x,y) \Ebb \left[ |u_0(y)|^q \right]^\frac{1}{q} \mu(dy) \right)^q\\
&\leq \Vert u_0 \Vert_q^q \left|\int_F p^b_t(x,y) \mu(dy) \right|^q\\
&\leq \Vert u_0 \Vert_q^q
\end{split}
\end{equation}
for all $(t,x) \in [0,T] \times F$, showing that the first term of the right-hand side of \eqref{induct} is indeed an element of $\Scal_{q,T}$. So $u_{n+1} \in \Scal_{q,T}$, and in conclusion $(u_n)_{n=1}^\infty$ is a well-defined sequence of elements of $\Scal_{q,T}$.

We now show that the sequence $(u_n)_{n=1}^\infty$ is Cauchy. For $n \geq 1$ let $v_n = u_{n+1} - u_n \in \Scal_{q,T}$. Then by Jensen's and Burkholder-Davis-Gundy inequalities, there exists a constant $C_q > 0$ such that for all $(t,x) \in [0,T] \times F$ we have that
\begin{equation*}
\begin{split}
&\Ebb\left[ |v_{n+1}(t,x)|^q \right] \\
&\leq  2^{q-1} T^\frac{q}{2}\Ebb\left[ \left( \int_0^t \int_F p^b_{t-s}(x,y)^2 \left( f(s,u_{n+1}(s,y)) - f(s,u_n(s,y)) \right)^2 \mu(dy)ds \right)^\frac{q}{2} \right]\\
&\phantom{=} + 2^{q-1} C_q\Ebb\left[ \left( \int_0^t \int_F p^b_{t-s}(x,y)^2 \left( g(s,u_{n+1}(s,y)) - g(s,u_n(s,y)) \right)^2 \mu(dy)ds \right)^\frac{q}{2} \right].
\end{split}
\end{equation*}
Using the Lipschitz property of $f$ and $g$ and then Minkowski's integral inequality this implies that
\begin{equation*}
\begin{split}
\Ebb\left[ |v_{n+1}(t,x)|^q \right] &\leq  2^{q-1} \left( T^\frac{q}{2} + C_q \right) C^q \Ebb\left[ \left( \int_0^t \int_F p^b_{t-s}(x,y)^2 v_n(s,y)^2 \mu(dy)ds \right)^\frac{q}{2} \right]\\
&\leq  2^{q-1} \left( T^\frac{q}{2} + C_q \right) C^q \left( \int_0^t \int_F p^b_{t-s}(x,y)^2 \Ebb\left[ |v_n(s,y)|^q \right]^\frac{2}{q} \mu(dy)ds \right)^\frac{q}{2}\\
\end{split}
\end{equation*}
Let $H_n(t) = \sup_{x \in F} \Ebb\left[ |v_n(t,x)|^q \right]^\frac{2}{q}$ for $n \geq 1$ and $t \in [0,T]$. Since $v_n \in \Scal_{q,T}$, each $H_n$ must be bounded in $[0,T]$. Then the above equation implies that there is a constant $C > 0$ such that for all $n$ and $(t,x)$,
\begin{equation*}
\begin{split}
\Ebb\left[ |v_{n+1}(t,x)|^q \right]^\frac{2}{q} &\leq C \int_0^t H_n(s) \int_F p^b_{t-s}(x,y)^2 \mu(dy)ds\\
&= C \int_0^t H_n(s) p^b_{2(t-s)}(x,x) ds\\
&\leq 2^{-\frac{d_s}{2}} c_1(2T) C \int_0^t H_n(s) (t-s)^{-\frac{d_s}{2}} ds,\\
\end{split}
\end{equation*}
where we have used Lemma \ref{thm:hkestim}(1). This implies that
\begin{equation*}
H_{n+1}(t) \leq 2^{-\frac{d_s}{2}} c_1(2T) C \int_0^t H_n(s) (t-s)^{-\frac{d_s}{2}} ds
\end{equation*}
for $t \in [0,T]$. By \cite[Lemma 3.3]{Walsh1986} and the fact that $d_s < 2$, there then exists a constant $C_0 > 0$ and an integer $k \geq 1$ such that for each $n,m \geq 1$ and $t \in [0,T]$ we have that
\begin{equation*}
H_{n + mk}(t) \leq \frac{C_0^m}{(m-1)!} \int_0^t H_n(s) (t-s) ds.
\end{equation*}
Thus for each $n \geq 1$, $\sum_{m=0}^\infty H_{n + mk}^\frac{1}{2}$ converges uniformly in $[0,T]$. This implies that $\sum_{n=1}^\infty H_n^\frac{1}{2}$ also converges uniformly in $[0,T]$ and so the sequence $(u_n)_{n=1}^\infty$ is Cauchy in $\Scal_{q,T}$. Let $u \in \Scal_{q,T}$ be the limit of this sequence. Now for each $(t,x) \in [0,T] \times F$ we take the limit $n \to \infty$ in $L^q(\Omega)$ on both sides of \eqref{induct}. The left-hand side tends to $u(t,x)$, whereas the right-hand side tends to
\begin{equation*}
\begin{split}
&\int_F p^b_t(x,y) u_0(y) \mu(dy) + \int_0^t \int_F p^b_{t-s}(x,y) f(s,u(s,y)) \mu(dy)ds\\
&+ \int_0^t \int_F p^b_{t-s}(x,y) g(s,u(s,y)) \xi(s,y) \mu(dy)ds
\end{split}
\end{equation*}
by a calculation that by now is routine. Therefore $u$ is a mild solution to the SPDE \eqref{WSPDE} defined on $[0,T] \times F$.
\end{proof}

\subsection{Continuous random field version}

Assuming Hypothesis \ref{walshhyp}, let $u \in \Scal_{q,T}$ be the mild solution to \eqref{WSPDE} as obtained in Theorem \ref{thm:exist}. Using Corollary \ref{thm:pred}, let $u^{\sto} = \{ u^{\sto}(t,x) : (t,x) \in [0,T] \times F \}$ be the element of $\Scal_{q,T}$ satisfying
\begin{equation*}
\begin{split}
u^{\sto}(t,x) &= \int_0^t \int_F p^b_{t-s}(x,y) f(s,u(s,y)) \mu(dy)ds\\
&\phantom{=} + \int_0^t \int_F p^b_{t-s}(x,y) g(s,u(s,y)) \xi(s,y) \mu(dy)ds\\
\end{split}
\end{equation*}
almost surely for each $(t,x) \in [0,T] \times F$. So $u^{\sto}$ is the ``stochastic part'' of $u$ and satisfies
\begin{equation*}
u^{\sto}(t,x) = u(t,x) - \int_F p^b_t(x,y) u_0(y) \mu(dy)
\end{equation*}
almost surely for each $(t,x) \in [0,T] \times F$. In this section we prove that $u^{\sto}$ has a version which is a continuous random field satisfying relatively weak continuity properties, and then we use Theorem \ref{mainthm} to bootstrap these into stronger H\"older continuity properties.
\begin{lem}\label{thm:heqcts1}
Let $b \in 2^{F^0}$. Let $h \in \Hcal$. Consider the map from $(0,\infty) \times F$ to $\Rbb$ given by
\begin{equation*}
(t,x) \mapsto \int_F p^b_t(x,y)h(y)\mu(dy).
\end{equation*}
Then for every $0 < T_1 < T_2$, on $[T_1,T_2] \times F$ this map is $\frac{1}{2}$-H\"older continuous with respect to $R_\infty$.
Moreover, if $\sup_{x \in F} |h(x)| < \infty$ then 
\begin{equation*}
\sup_{(t,x) \in (0,\infty) \times F} \left| \int_F p^b_t(x,y)h(y)\mu(dy) \right| \leq \sup_{x \in F} |h(x)|.
\end{equation*}
\end{lem}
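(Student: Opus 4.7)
The plan is to rewrite the integral as an inner product, $\int_F p^b_t(x,y) h(y) \mu(dy) = \langle p^b_t(x,\cdot), h \rangle_\mu$, and reduce both assertions to estimates on the kernel $p^b$. For the H\"older continuity, Cauchy--Schwarz yields
\begin{equation*}
\left| \langle p^b_t(x,\cdot) - p^b_{t'}(x',\cdot), h \rangle_\mu \right| \leq \left\Vert p^b_t(x,\cdot) - p^b_{t'}(x',\cdot) \right\Vert_\mu \Vert h \Vert_\mu,
\end{equation*}
so it suffices to show that the $\Hcal$-valued map $(t,x) \mapsto p^b_t(x,\cdot)$ is $\tfrac{1}{2}$-H\"older with respect to $R_\infty$ on $[T_1, T_2] \times F$. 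I would control the spatial and temporal increments separately and combine via the triangle inequality.

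For the spatial increment at fixed $t \in [T_1,T_2]$, Lemma \ref{thm:hkestim}(2) applied pointwise in $y$, together with $\mu(F) = 1$, gives
\begin{equation*}
\left\Vert p^b_t(x,\cdot) - p^b_t(x',\cdot) \right\Vert_\mu^2 \leq c_2(T_2)\, T_1^{-1 - \frac{d_s}{2}}\, R(x,x'),
\end{equation*}
which is the desired $\tfrac{1}{2}$-H\"older bound. For the temporal increment at fixed $x$ with $T_1 \leq s \leq t \leq T_2$, the semigroup identity expands the squared norm as
\begin{equation*}
\left\Vert p^b_t(x,\cdot) - p^b_s(x,\cdot) \right\Vert_\mu^2 = p^b_{2t}(x,x) - 2 p^b_{s+t}(x,x) + p^b_{2s}(x,x),
\end{equation*}
and I would exploit the fact, extracted from the proof of Lemma \ref{thm:hkestim}(3), that $u \mapsto p^b_u(x,x)$ is continuously differentiable with $|\partial_u p^b_u(x,x)| \leq c_3'(2T_2)\, u^{-1 - \frac{d_s}{2}}$, a quantity uniformly bounded on $[2T_1, 2T_2]$. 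Writing the above as a telescoping sum over $[2s, s+t]$ and $[s+t, 2t]$ and invoking this Lipschitz bound produces a constant multiple of $(t-s)$, giving $\tfrac{1}{2}$-H\"older continuity in time.

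For the boundedness statement I would use
\begin{equation*}
\left| \int_F p^b_t(x,y) h(y) \mu(dy) \right| \leq \sup_{y \in F} |h(y)| \int_F p^b_t(x,y) \mu(dy),
\end{equation*}
and then combine the comparison $0 \leq p^b_t \leq p^N_t$ (used in the proof of Lemma \ref{thm:hkestim}(1)) with the observation that the constant function $\1bb_F$ lies in $\Dcal_N$ with $\Delta_N \1bb_F = 0$, so that $S^N_t \1bb_F = \1bb_F$ and hence $\int_F p^N_t(x,y) \mu(dy) = 1$. The only delicate point is the temporal estimate, whose constant blows up as $T_1 \searrow 0$; this is precisely why the H\"older conclusion must be restricted to time intervals bounded away from the origin.
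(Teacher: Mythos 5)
Your argument is correct and follows the same strategy as the paper: split into spatial and temporal increments, use the pointwise kernel estimates from Lemma~\ref{thm:hkestim}, and use the semigroup identity $\Vert p^b_t(x,\cdot)-p^b_s(x,\cdot)\Vert_\mu^2 = p^b_{2t}(x,x)-2p^b_{s+t}(x,x)+p^b_{2s}(x,x)$. The small differences are cosmetic: for the spatial increment you interpose Cauchy--Schwarz (producing a factor $\Vert h\Vert_\mu$), whereas the paper bounds $|p^b_t(x,z)-p^b_t(y,z)|$ uniformly in $z$ and keeps $\Vert h\Vert_1$; both give the same exponent. For the temporal increment, your telescoping $(p^b_{2t}-p^b_{s+t})+(p^b_{2s}-p^b_{s+t})$ with the derivative bound $|\partial_u p^b_u(x,x)|\le c_3'(2T_2)(2T_1)^{-1-d_s/2}$ on $[2T_1,2T_2]$ is actually a bit cleaner than the paper's route, which first drops $p^b_{2t}(x,x)-p^b_{s+t}(x,x)\le 0$ by monotonicity, then invokes Lemma~\ref{thm:hkestim}(3) and a (slightly nonobvious) comparison $(2s)^{-d_s/2}-(s+t)^{-d_s/2}\le s^{-d_s/2}-t^{-d_s/2}$ before finishing with Lipschitz continuity of $u\mapsto u^{-d_s/2}$ on $[T_1,T_2]$; your version sidesteps that comparison. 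The boundedness argument matches the paper's, and your justification that $\int_F p^N_t(x,y)\,\mu(dy)=1$ (constants lie in $\Dcal_N$ and are fixed by $S^N_t$), combined with $0\le p^b_t\le p^N_t$, is the right way to make the paper's unelaborated step $\int_F p^b_t(x,y)\,\mu(dy)\le 1$ explicit.
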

\begin{proof}
Fix $0 < T_1 < T_2$. It is enough to prove that the map is uniformly H\"older continuous in each argument. By Lemma \ref{thm:hkestim}, for any $t \in [T_1,T_2]$ and $x,y \in F$,
\begin{equation*}
\left| \int_F (p^b_t(x,z) - p^b_t(y,z))h(z)\mu(dz) \right| \leq c_2(T_2)^\frac{1}{2} T_1^{-\frac{1}{2} - \frac{d_s}{4}} R(x,y)^\frac{1}{2} \int_F |h(z)|\mu(dz).
\end{equation*}
Similarly for any $s,t \in [T_1,T_2]$ with $s < t$ and $x \in F$, the fact that $t' \mapsto p^b_{t'}(x,x)$ is decreasing (evident by definition of $p^b$) implies that
\begin{equation*}
\begin{split}
&\left( \int_F (p^b_s(x,z) - p^b_t(x,z))h(z)\mu(dz) \right)^2\\
&\leq \int_F (p^b_s(x,z) - p^b_t(x,z))^2\mu(dz) \int_F h(z)^2\mu(dz)\\
&\leq \left( p^b_{2s}(x,x) - 2p^b_{s+t}(x,x) + p^b_{2t}(x,x) \right) \int_F h(z)^2\mu(dz)\\
&\leq \left( p^b_{2s}(x,x) - p^b_{s+t}(x,x) \right) \int_F h(z)^2\mu(dz)\\
&\leq C_3(2T_2)\left( (2s)^{-\frac{d_s}{2}} - (s+t)^{-\frac{d_s}{2}} \right) \int_F h(z)^2\mu(dz)\\
&\leq C_3(2T_2)\left( s^{-\frac{d_s}{2}} - t^{-\frac{d_s}{2}} \right) \int_F h(z)^2\mu(dz).
\end{split}
\end{equation*}
Now $t \mapsto t^{-\frac{d_s}{2}}$ is Lipschitz in $[T_1,T_2]$ so we have the required result.

For the last claim, if $\sup_{x \in F} |h(x)| = C$ then
\begin{equation*}
\left| \int_F p^b_t(x,y)h(y)\mu(dy) \right| \leq C\int_F p^b_t(x,y)\mu(dy) \leq C
\end{equation*}
for all $(t,x) \in (0,\infty) \times F$.
\end{proof}
Our first result on $u^{\sto}$ uses directly the stochastic continuity results of the previous section.
\begin{prop}\label{thm:stoctscty}
Assume Hypothesis \ref{walshhyp} with $q > 2(d_H + 1)^2$. Then there exists $\gamma \in (0,1]$ such that $u^{\sto}$ has a version $\tilde{u}^{\sto}$ which is predictable and $\gamma$-H\"older continuous on $[0,T] \times F$ with respect to $R_\infty$ almost surely.
\end{prop}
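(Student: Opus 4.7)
The plan is to feed the mild solution $u \in \Scal_{q,T}$ produced by Theorem \ref{thm:exist} back into Proposition \ref{thm:stocts} as the input process $v_0$. Then $v_1$ and $v_2$ in that proposition are exactly the two integrals defining $u^{\sto}$, so the four bounds obtained there yield, for all $s,t \in [0,T]$ and $x,y \in F$,
\begin{align*}
\Ebb\left[ |u^{\sto}(t,x) - u^{\sto}(t,y)|^q \right] &\leq C R(x,y)^{q/4},\\
\Ebb\left[ |u^{\sto}(s,x) - u^{\sto}(t,x)|^q \right] &\leq C |s-t|^{(q/2)(1-d_s/2)},
\end{align*}
for some constant $C > 0$ depending only on $T$, $q$ and $\Vert u \Vert_{q,T}$, with analogous bounds swapping the roles of $s,t$ and $x,y$.

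Next I would combine these into a single estimate with respect to $R_\infty$. Using the triangle inequality in $L^q(\Omega)$, the identity $1 - d_s/2 = 1/(d_H + 1)$, and the fact that $d_H \geq 1$ forces $1/(2(d_H+1)) \leq 1/4$, together with the boundedness of $R_\infty$ on $[0,T] \times F$, I would obtain
\[
\Ebb\left[ |u^{\sto}(s,x) - u^{\sto}(t,y)|^q \right] \leq C' R_\infty\bigl((s,x),(t,y)\bigr)^{q/(2(d_H+1))}.
\]
Since $([0,T] \times F, R_\infty)$ has effective dimension $d_H + 1$ (time contributes $1$, the fractal contributes $d_H$), the hypothesis $q > 2(d_H+1)^2$ is equivalent to $q/(2(d_H+1)) > d_H + 1$, which is exactly what the Kolmogorov-type continuity theorem \cite[Theorem 3.17, Corollary 3.19]{Hambly2016} requires. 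Applying it delivers a version $\tilde u^{\sto}$ of $u^{\sto}$ which is almost surely $\gamma$-H\"older continuous on $([0,T] \times F, R_\infty)$ for any $\gamma < 1/(2(d_H+1)) - (d_H+1)/q$; in particular some $\gamma \in (0,1]$ works.

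Finally I would verify that $\tilde u^{\sto}$ can be taken to be predictable. Being a version of the predictable process $u^{\sto}$ it is adapted, and its almost sure joint continuity combined with compactness of $[0,T] \times F$ makes it jointly measurable on $\Omega \times [0,T] \times F$ by \cite[Lemma 4.51]{Aliprantis2006}. Following the approximation scheme used in the proof of Corollary \ref{thm:pred}, the step processes $\sum_{i=0}^{2^n - 1} \tilde u^{\sto}(iT/2^n, x) \1bb_{(iT/2^n,\,(i+1)T/2^n]}(t)$ are predictable and, by uniform continuity in $t$, converge pointwise to $\tilde u^{\sto}$, so the limit is predictable. The main obstacle I anticipate is the Kolmogorov step: verifying that the combined space-time estimate is of the right form with the correct critical exponent $d_H + 1$ so that the sharp hypothesis $q > 2(d_H+1)^2$ suffices. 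Once the identity $1 - d_s/2 = 1/(d_H+1)$ is observed, however, everything else reduces to arithmetic and routine measurability bookkeeping.
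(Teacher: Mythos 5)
Your argument is essentially the paper's own proof, which is just a few lines: it feeds the solution $u$ into Proposition \ref{thm:stocts} exactly as you do, then invokes \cite[Theorem 3.17]{Hambly2016} (using $d_H \geq 1$ to see that $q > 2(d_H+1)^2$ is the right threshold) to produce the continuous version, and notes predictability follows immediately from adaptedness plus continuity. Your extra steps -- explicitly merging the two moment bounds into a single $R_\infty$-estimate and spelling out the step-function approximation for predictability -- are correct elaborations of what the paper leaves implicit.
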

\begin{proof}
Given the stochastic continuity estimates in Proposition \ref{thm:stocts} and the fact that $d_H \geq 1$ (from \cite[Remark 2.6(2)]{Hambly2016}), the condition $q > 2(d_H + 1)^2$ is precisely what is needed for us to be able to use \cite[Theorem 3.17]{Hambly2016} to construct an almost surely H\"older continuous version of $u^{\sto}$ on $[0,T] \times F$ for any $T > 0$. It is adapted, so its continuity immediately implies that it is predictable.
\end{proof}
Let $C(F)$ be the Banach space of continuous functions from $F$ to $\Rbb$ equipped with uniform norm $\Vert \cdot \Vert_\infty$. We use the above two results to construct a process in this space which is ``almost'' a version of $u$.
\begin{cor}\label{thm:leftctsproc}
Assume Hypothesis \ref{walshhyp} with $q > 2(d_H + 1)^2$. Then there exists a predictable $C(F)$-valued process $(\tilde{u}(t,\cdot), t \in [0,T])$ such that $u(0,\cdot) = 0$, and $\tilde{u}(t,x) = u(t,x)$ almost surely for $(t,x) \in (0,T] \times F$. Moreover, $t \mapsto \tilde{u}(t,\cdot)$ is continuous from $(0,T]$ to $C(F)$ almost surely.
\end{cor}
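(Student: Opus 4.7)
The plan is to write $\tilde{u}$ as a sum of a ``deterministic'' heat-semigroup part $v$ and the stochastic part $\tilde{u}^{\sto}$ from Proposition \ref{thm:stoctscty}, and then apply Lemma \ref{thm:heqcts1} pathwise to $u_0(\omega,\cdot)$. Concretely, we will set
\begin{equation*}
\tilde{u}(t,x) := \int_F p^b_t(x,y) u_0(y) \mu(dy) + \tilde{u}^{\sto}(t,x)
\end{equation*}
for $(t,x) \in (0,T] \times F$, and declare $\tilde{u}(0,\cdot) \equiv 0$. Before invoking Lemma \ref{thm:heqcts1} we first need $u_0(\omega,\cdot) \in \Hcal$ for $\Pbb$-a.e.\ $\omega$; this follows by Tonelli and Jensen from Hypothesis \ref{walshhyp}(1), as $\Ebb\bigl[ \int_F u_0(y)^2 \mu(dy) \bigr] \leq \mu(F) \Vert u_0 \Vert_q^2 < \infty$.

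With $u_0(\omega,\cdot) \in \Hcal$ a.s., Lemma \ref{thm:heqcts1} applied pathwise says that the map $v(\omega,t,x) := \int_F p^b_t(x,y) u_0(\omega,y) \mu(dy)$ is, for almost every $\omega$, jointly $\tfrac{1}{2}$-H\"older continuous on $[T_1,T] \times F$ with respect to $R_\infty$ for every $0 < T_1 < T$. Hence $t \mapsto v(\omega,t,\cdot)$ is continuous from $(0,T]$ into $(C(F), \Vert\cdot\Vert_\infty)$ almost surely. Combining this with the joint continuity of $\tilde{u}^{\sto}$ on the compact set $[0,T] \times F$ (Proposition \ref{thm:stoctscty}), which gives continuity of $t \mapsto \tilde{u}^{\sto}(t,\cdot)$ from $[0,T]$ into $C(F)$ a.s., we conclude that $\tilde{u}(t,\cdot) \in C(F)$ for every $t \in (0,T]$ and $t \mapsto \tilde{u}(t,\cdot)$ is continuous from $(0,T]$ into $C(F)$, both almost surely.

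The identity $\tilde{u}(t,x) = u(t,x)$ a.s.\ on $(0,T] \times F$ is immediate: by the very definition of $u^{\sto}$ we have $v(t,x) = u(t,x) - u^{\sto}(t,x)$ a.s., while $\tilde{u}^{\sto}(t,x) = u^{\sto}(t,x)$ a.s.\ since the former is a version of the latter. Finally, for predictability of $\tilde{u}$ as a $C(F)$-valued process, it suffices to argue for the two summands: $\tilde{u}^{\sto}$ is adapted with $C(F)$-valued continuous sample paths on $[0,T]$, hence predictable; and $v$ is $\Fcal_0$-adapted with sample paths continuous on $(0,T]$ and equal to $0$ at $t=0$, hence left-continuous on $[0,T]$, so predictability follows from the standard dyadic approximation of left-continuous adapted processes. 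The main subtlety is that $u_0$ need not lie in $C(F)$, so continuity cannot be extended down to $t = 0$; the smoothing of the heat semigroup places $v(t,\cdot)$ into $C(F)$ for every $t > 0$, but we make no claim at $t = 0$, where we simply set $\tilde{u}(0,\cdot) \equiv 0$ by convention.
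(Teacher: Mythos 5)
Your proof is correct and follows essentially the same approach as the paper: define $\tilde{u}$ by the same decomposition into the heat-semigroup term and $\tilde{u}^{\sto}$, use Lemma \ref{thm:heqcts1} and Proposition \ref{thm:stoctscty} for continuity on $[T_1,T]\times F$, and deduce predictability from left-continuity plus adaptedness. Your spelled-out Tonelli/Jensen justification that $u_0 \in \Hcal$ a.s.\ and your summand-by-summand predictability argument are minor elaborations of steps the paper leaves implicit.
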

\begin{proof}
Define $\tilde{u}(t,x)$ as follows: for $t = 0$ let $\tilde{u}(0,x) = 0$ for all $x \in F$. For all $(t,x) \in (0,T] \times F$ let
\begin{equation*}
\tilde{u}(t,x) = \int_F p^b_t(x,y) u_0(y) \mu(dy) + \tilde{u}^{\sto}(t,x),
\end{equation*}
Where $\tilde{u}^{\sto}$ is as defined in Proposition \ref{thm:stoctscty}. We therefore obviously have $\tilde{u}(t,x) = u(t,x)$ almost surely for $(t,x) \in (0,T] \times F$. Hypothesis \ref{walshhyp} implies that $u_0 \in \Hcal$ almost surely, so Lemma \ref{thm:heqcts1} and Proposition \ref{thm:stoctscty} imply that $\tilde{u}$ is almost surely H\"older continuous in $[T_1,T] \times F$ with respect to $R_\infty$ for any $T_1 \in (0,T)$. Thus $t \mapsto \tilde{u}(t,\cdot)$ is well-defined as a $C(F)$-valued process and is almost surely continuous from $(0,T]$ to $C(F)$. It is therefore almost surely left-continuous on $[0,T]$. This combined with its adaptedness implies that it is predictable.
\end{proof}
\begin{lem}\label{thm:betasigma}
Assume Hypothesis \ref{walshhyp} with $q > 2(d_H + 1)^2$. Let $\tilde{u}$ be as given in Corollary \ref{thm:leftctsproc}. Let $\beta = \{ \beta(t) : t \in [0,T] \}$ be the $\Hcal$-valued process given by $\beta(t)(x) = f(t,\tilde{u}(t,x))$ for $x \in F$, and let $\sigma = \{ \sigma(t) : t \in [0,T] \}$ be the $\Lcal(\Hcal)$-valued process given by $\sigma(t) = \Mcal_{( x \mapsto g(t,\tilde{u}(t,x)))}$. Then $\beta$ and $\sigma$ are predictable processes on their respective spaces $\Hcal$ and $\Lcal(\Hcal)$.
\end{lem}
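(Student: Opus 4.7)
The key observation is that once we establish joint measurability of the evaluation $(\omega,t,x)\mapsto\tilde u(\omega,t,x)$ with respect to $\Pcal_T\otimes\Bcal(F)$, the measurability of $\beta$ and $\sigma$ will follow by composition with $f$ and $g$ and a standard Pettis-type upgrade to Banach-valued predictability. To establish joint measurability of $\tilde u$, note that by Corollary~\ref{thm:leftctsproc} the map $(\omega,t)\mapsto\tilde u(\omega,t,\cdot)$ is predictable as a $C(F)$-valued process (with $C(F)$ equipped with the uniform norm and its Borel $\sigma$-algebra), while the evaluation map $(h,x)\mapsto h(x)$ from $C(F)\times F$ to $\Rbb$ is continuous, hence $\Bcal(C(F))\otimes\Bcal(F)$-measurable since both $C(F)$ and $F$ are separable metric. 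Composing these gives that $(\omega,t,x)\mapsto\tilde u(\omega,t,x)$ is $\Pcal_T\otimes\Bcal(F)$-measurable.

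\textbf{Predictability of $\beta$.} Since the product map $(\omega,t,x)\mapsto(\omega,t,\tilde u(\omega,t,x))$ is measurable from $\Pcal_T\otimes\Bcal(F)$ to $\Pcal_T\otimes\Bcal(\Rbb)$, and $f$ is measurable from $\Pcal_T\otimes\Bcal(\Rbb)$ to $\Bcal(\Rbb)$ by Hypothesis~\ref{walshhyp}, we obtain joint $\Pcal_T\otimes\Bcal(F)$-measurability of $(\omega,t,x)\mapsto f(\omega,t,\tilde u(\omega,t,x))$. Almost surely $\tilde u(\omega,t,\cdot)$ is continuous on the compact set $F$, hence bounded, and the linear growth condition gives $|\beta(\omega,t)(x)|\le M(\omega,t)+C\sup_{y\in F}|\tilde u(\omega,t,y)|<\infty$, so $\beta(\omega,t)\in L^\infty(F,\mu)\subset\Hcal$. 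To conclude predictability of $\beta$ as an $\Hcal$-valued process, I use Pettis's theorem: since $\Hcal$ is separable, it suffices to verify that $(\omega,t)\mapsto\langle\beta(\omega,t),h\rangle_\mu$ is $\Pcal_T$-measurable for each $h\in\Hcal$, which follows from Fubini-Tonelli applied to the jointly measurable integrand $f(\omega,t,\tilde u(\omega,t,x))h(x)$ once we bound $|f(\omega,t,\tilde u(\omega,t,x))h(x)|$ by an integrable function of $x$ (which the above bound provides pathwise).

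\textbf{Predictability of $\sigma$ and the main subtlety.} Setting $\tilde\sigma(\omega,t,x):=g(\omega,t,\tilde u(\omega,t,x))$, exactly the same composition argument shows that $\tilde\sigma$ is $\Pcal_T\otimes\Bcal(F)$-measurable, and the linear growth of $g$ combined with pathwise boundedness of $\tilde u(\omega,t,\cdot)$ yields $\sup_{x\in F}|\tilde\sigma(\omega,t,x)|<\infty$; thus $\sigma(\omega,t)=\Mcal_{\tilde\sigma(\omega,t,\cdot)}\in\Lcal(\Hcal)$ with operator norm at most $M(\omega,t)+C\sup_y|\tilde u(\omega,t,y)|$. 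The one point of genuine care is that $\Lcal(\Hcal)$ equipped with the operator norm is non-separable, so Pettis's theorem cannot be invoked directly to deduce Borel measurability into $\Lcal(\Hcal)$. The remedy---sufficient for the downstream use in Hypothesis~\ref{hyp2} and Proposition~\ref{apriori}(3)---is to verify strong operator predictability: for each $h\in\Hcal$, the $\Hcal$-valued process $(\omega,t)\mapsto\sigma(\omega,t)h$ is predictable by the same Pettis/Fubini argument applied to the jointly measurable integrand $\tilde\sigma(\omega,t,x)h(x)$. This is the appropriate notion of predictability for multiplication-operator integrands against a cylindrical Wiener process and is what the later arguments (particularly the stochastic integrals in Proposition~\ref{apriori}(3) and Proposition~\ref{temporal}) actually require.
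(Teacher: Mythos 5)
Your argument for $\beta$ is a valid alternative to the paper's: you establish joint $\Pcal_T\otimes\Bcal(F)$-measurability of the scalar field $(\omega,t,x)\mapsto\tilde u(\omega,t,x)$ via the evaluation map on $C(F)\times F$, compose with $f$, and upgrade to $\Hcal$-valued predictability by Pettis; the paper instead applies the Carath\'eodory joint-measurability lemma (\cite[Lemma 4.51]{Aliprantis2006}) to the substitution map $\bar f:\Omega\times[0,T]\times\Hcal\to\Hcal$, $(\omega,t,h)\mapsto f(\omega,t,h(\cdot))$, and composes with the predictable $C(F)$-valued process $\tilde u$. Both routes work for $\beta$.

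For $\sigma$ you have correctly diagnosed the obstruction but not found the resolution, and as a consequence your proof establishes a \emph{strictly weaker} conclusion than the lemma states. The lemma asserts that $\sigma$ is a predictable $\Lcal(\Hcal)$-valued process, i.e.\ $\Pcal_T/\Bcal(\Lcal(\Hcal))$-measurable, which is exactly what the framework of Section~\ref{setup} posits for the diffusion coefficient in \eqref{SPDE}; you instead prove only strong operator predictability and argue it ``suffices downstream.'' That is a retreat from the claim, not a proof of it. The resolution in the paper is to factor the construction entirely through the \emph{separable} Banach space $C(F)$: one defines $\bar g:\Omega\times[0,T]\times C(F)\to C(F)$ by $\bar g(\omega,t,\eta)=(x\mapsto g(\omega,t,\eta(x)))$, shows by the same Carath\'eodory argument that $\bar g$ is $\Pcal_T\otimes\Bcal(C(F))/\Bcal(C(F))$-measurable, and then postcomposes with the map $\eta\mapsto\Mcal_\eta$ from $C(F)$ into $\Lcal(\Hcal)$. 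Since $\mu$ has full support, $\Vert\Mcal_\eta\Vert=\Vert\eta\Vert_\infty$, so this map is an isometric linear embedding, in particular continuous, hence $\Bcal(C(F))/\Bcal(\Lcal(\Hcal))$-measurable. Composing the predictable $C(F)$-valued process $(\omega,t)\mapsto\bar g(\omega,t,\tilde u(\omega,t,\cdot))$ with this continuous embedding yields a $\Pcal_T/\Bcal(\Lcal(\Hcal))$-measurable process, despite the non-separability of $\Lcal(\Hcal)$: the process simply takes values in the separable isometric copy of $C(F)$ sitting inside $\Lcal(\Hcal)$, and continuity of the embedding gives Borel measurability into the big space for free. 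Pettis is the wrong tool here; the point is that one never needs to measure $\sigma$ directly against the Borel structure of all of $\Lcal(\Hcal)$ from scratch, because multiplication by a continuous function depends continuously on that function.
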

\begin{proof}
Recall that $t \mapsto \tilde{u}$ is a predictable $C(F)$-valued process. To prove this result we show that the processes $\beta$ and $\sigma$ are compositions of suitably measurable functions of $\tilde{u}$.

To show that $\beta$ is predictable, let $\bar{f}:\Omega \times [0,\infty) \times \Hcal \to \Hcal$ be given by $\bar{f}(\omega,t,h) = (x \mapsto f(\omega,t,h(x)))$. For fixed $(\omega,t) \in \Omega \times [0,\infty)$ this is continuous in $h$ since $f$ is Lipschitz in $x$. For fixed $h \in \Hcal$ this is predictable; indeed for any closed $\Hcal$-ball $\bar{B}(h',\epsilon) \in \Bcal(\Hcal)$,
\begin{equation*}
\left\{(\omega,t):f(\omega,t,h(\cdot)) \in \bar{B}(h',\epsilon) \right\} = \left\{(\omega,t): \int_F(f(\omega,t,h(x)) - h'(x))^2\mu(dx) \leq \epsilon \right\}
\end{equation*}
which is predictable because $(\omega,t,x) \mapsto f(\omega,t,h(x))$ is predictable. Therefore by \cite[Lemma 4.51]{Aliprantis2006}, $\bar{f}$ is $\Pcal_T \otimes \Bcal(\Hcal)$-measurable. So
\begin{equation*}
\beta(t) = \bar{f}(t,\iota(\tilde{u}(t,\cdot))),
\end{equation*}
where $\iota: C(F) \hookrightarrow \Hcal$ is the continuous inclusion map. Thus $\beta$ is a predictable $\Hcal$-valued process.

Now we deal with $\sigma$, which is slightly different. let $\bar{g}:\Omega \times [0,\infty) \times C(F) \to C(F)$ be given by $\bar{g}(\omega,t,\eta) = (x \mapsto g(\omega,t,\eta(x)))$. For fixed $(\omega,t) \in \Omega \times [0,\infty)$ this is continuous in $\eta$ since $g$ is Lipschitz in $x$. For fixed $\eta \in C(F)$ this is predictable; indeed, let $\{ x_i \}_i$ be a countable dense subset of $F$. Then for any closed $C(F)$-ball $\bar{B}(\eta',\epsilon) \in \Bcal(C(F))$,
\begin{equation*}
\left\{(\omega,t):g(\omega,t,\eta(\cdot)) \in \bar{B}(\eta',\epsilon) \right\} = \bigcap_i\left\{(\omega,t): |g(\omega,t,\eta(x_i)) - \eta'(x_i)| \leq \epsilon \right\},
\end{equation*}
which is predictable because $(\omega,t,x) \mapsto g(\omega,t,h(x))$ is predictable. Therefore by \cite[Lemma 4.51]{Aliprantis2006}, $\bar{g}$ is $\Pcal_T \otimes \Bcal(C(F))$-measurable. Now consider the function $\eta \mapsto \Mcal_\eta$ from $C(F)$ to $\Lcal(\Hcal)$. This is a continuous linear map since $\Vert \Mcal_\eta \Vert = \Vert \eta \Vert_\infty$ for all $\eta \in C(F)$. So
\begin{equation*}
\sigma(t) = \Mcal_{\bar{g}(t,\hat{u}(t,\cdot))}
\end{equation*}
which is now clearly a predicable $\Lcal(\Hcal)$-valued process.
\end{proof}
We may now prove our H\"older regularity result for $u^{\sto}$:
\begin{thm}
Assume Hypothesis \ref{walshhyp} with $q > 2(d_H + 1)^2$. Then $u^{\sto}$ has a version $\tilde{u}^{\sto}$ with the following H\"older continuity properties:
\begin{enumerate}
\item $\tilde{u}^{\sto}$ is almost surely essentially $\left( \frac{1}{2} (d_H + 1)^{-1} - \frac{d_H + 1}{q} \right)$-H\"{o}lder continuous in $[0,T] \times F$ with respect to $R_\infty$,
\item For each $t \in [0,T]$, $\tilde{u}^{\sto}(t,\cdot)$ is almost surely essentially $\left( \frac{1}{2} - \frac{d_H}{q} \right)$-H\"{o}lder continuous in $F$ with respect to $R$,
\item For each $x \in F$, $\tilde{u}^{\sto}(\cdot,x)$ is almost surely essentially $\left( \frac{1}{2} (d_H + 1)^{-1} - \frac{1}{q} \right)$-H\"{o}lder continuous in $[0,T]$.
\end{enumerate}
\end{thm}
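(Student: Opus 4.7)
The plan is to realise $u^{\sto}$ as the mild solution of a da Prato--Zabczyk SPDE of the form \eqref{SPDE} and then invoke Theorem~\ref{mainthm}. Let $\tilde u$ be the process from Corollary~\ref{thm:leftctsproc}, and let $\beta$ and $\sigma$ be the predictable processes built from $\tilde u$ in Lemma~\ref{thm:betasigma}. Define the $\Hcal$-valued process
\begin{equation*}
U(t) := \int_0^t S^b_{t-s}\beta_s\,ds + \int_0^t S^b_{t-s}\sigma_s\,dW(s),\quad t\in[0,T].
\end{equation*}
Since $\sigma_t=\Mcal_{\tilde\sigma(t,\cdot)}$ with $\tilde\sigma(t,x)=g(t,\tilde u(t,x))$, the second part of Hypothesis~\ref{hyp2} is the natural one to verify. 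Setting $p=q/2>(d_H+1)^2$, the hypothesis $q>2(d_H+1)^2$ gives us exactly the integrability exponent needed in Theorem~\ref{mainthm}.

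Next I would verify both pieces of Hypothesis~\ref{hyp2}. Using the linear growth bound $|g(s,x)|\le M(s)+C|x|$ together with $\tilde u\in\Scal_{q,T}$ (which follows because $\tilde u$ equals $u$ a.s.\ on $(0,T]\times F$), one obtains immediately
\begin{equation*}
\sup_{(s,x)\in[0,T]\times F}\Ebb\bigl[|\tilde\sigma(s,x)|^{q}\bigr]\le 2^{q-1}\bigl(\Vert M\Vert_{q,T}^q+C^q\Vert u\Vert_{q,T}^q\bigr)<\infty.
\end{equation*}
For $\beta$, bounding $\Vert\beta_s\Vert_\mu^q\le\int_F|f(s,u(s,y))|^q\,\mu(dy)$ by Jensen (since $\mu$ is a probability measure) and combining with Jensen on the time integral $(\int_0^T\Vert\beta_s\Vert_\mu^2 ds)^{q/2}\le T^{q/2-1}\int_0^T\Vert\beta_s\Vert_\mu^{q}\,ds$, one obtains finiteness of $\Ebb[(\int_0^T\Vert\beta_s\Vert_\mu^2 ds)^{q/2}]$ from $\Vert M\Vert_{q,T}+\Vert u\Vert_{q,T}<\infty$. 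Hence Hypothesis~\ref{hyp2} holds with $p=q/2$, and Theorem~\ref{mainthm} provides a continuous random field $\hat u$ which is a version of $U$ and whose H\"older exponents are precisely those claimed in the statement (after substituting $p=q/2$).

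It remains to show that $\hat u$ is a version of $u^{\sto}$, i.e.\ $\hat u(t,x)=u^{\sto}(t,x)$ almost surely for each fixed $(t,x)\in[0,T]\times F$. By Theorem~\ref{randomfieldexist}, $\hat u(t,x)$ is the $L^{2p}(\Omega)$-limit of $\langle U(t),f^x_n\rangle_\mu$, so it suffices to check that the same sequence converges to $u^{\sto}(t,x)$ in probability. For the drift term, self-adjointness of $S^b$ and Fubini give
\begin{equation*}
\left\langle\int_0^t S^b_{t-s}\beta_s\,ds,f^x_n\right\rangle_\mu=\int_0^t\int_F f^x_n(z)(S^b_{t-s}\beta_s)(z)\,\mu(dz)\,ds,
\end{equation*}
and by the stochastic Fubini theorem (applied since $\sigma_s$ is a multiplication operator, exactly as in the proofs of Proposition~\ref{apriori}(3) and Proposition~\ref{temporal}) the analogous identity holds for the stochastic integral, turning it into $\int_0^t\int_F (S^b_{t-s}f^x_n)(y)\,g(s,u(s,y))\,\xi(s,y)\,\mu(dy)\,ds$. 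As $n\to\infty$, the test function $f^x_n$ averages over $D^0_n(x)$ and one shows (using continuity of $(z,y)\mapsto p^b_{t-s}(z,y)$ in $z$ at $z=x$ for $s<t$, together with the a priori $L^2$ bounds in Lemma~\ref{thm:hkestim}(1) and dominated convergence) that both terms converge in $L^2(\Omega)$ to the Walsh integrals defining $u^{\sto}(t,x)$.

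The main obstacle I anticipate is this last identification step: the limit $\langle U(t),f^x_n\rangle_\mu\to u^{\sto}(t,x)$ is really the statement that the da Prato--Zabczyk stochastic convolution and the Walsh stochastic convolution coincide in the multiplicative case, and care is needed in justifying the interchange of the $\mu(dz)$-integral against $f^x_n$ with the $dW$-integral. Once this identification is established, since $\hat u$ and $u^{\sto}$ agree in finite-dimensional distributions and $\hat u$ is jointly continuous in $(t,x)$, $\hat u$ serves as the desired H\"older continuous version $\tilde u^{\sto}$.
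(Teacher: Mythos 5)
Your high-level strategy matches the paper's: realise $u^{\sto}$ as a da Prato--Zabczyk mild solution via the processes $\beta,\sigma$ of Lemma~\ref{thm:betasigma}, verify Hypothesis~\ref{hyp2} with $p=q/2$, and invoke Theorem~\ref{mainthm}. Your verification of Hypothesis~\ref{hyp2} is correct. The gap is in the final identification, and you correctly flag it.

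The paper avoids the $f^x_n$-limit argument entirely by reversing the order of operations. It first uses Proposition~\ref{thm:stoctscty} (a direct Kolmogorov-type argument on the Walsh stochastic-continuity estimates of Proposition~\ref{thm:stocts}) to obtain an a.s.\ jointly continuous version $\tilde u^{\sto}$ of $u^{\sto}$ on all of $[0,T]\times F$, with some H\"older exponent $\gamma>0$ but not the sharp one. It then \emph{defines} $U(t) := \tilde u^{\sto}(t,\cdot)$, rather than defining $U$ by the convolution formula, and verifies the mild equation $U(t) = \int_0^t S^b_{t-s}\beta(s)\,ds + \int_0^t S^b_{t-s}\sigma(s)\,dW(s)$ by testing against the elements of a fixed countable orthonormal basis of $\Hcal$ and applying Walsh's stochastic Fubini theorem --- a routine step requiring no limit of delta approximants. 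Theorem~\ref{mainthm} then supplies $\hat u$ with the sharp exponents satisfying $\hat u(t,\cdot) = U(t) = \tilde u^{\sto}(t,\cdot)$ in $\Hcal$ a.s.\ for each $t$; since both sides are a.s.\ continuous in $x$ they agree pointwise, and by separability of $[0,T]\times F$ the two continuous processes are indistinguishable. In short, Theorem~\ref{mainthm} is used to \emph{upgrade} the H\"older exponent of an already-continuous random field, not to manufacture continuity from scratch.

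Your alternative route --- defining $U$ by the convolution formula and proving $\langle U(t),f^x_n\rangle_\mu\to u^{\sto}(t,x)$ --- is salvageable but substantially harder than you let on: the limit kernel $p^b_{t-s}(x,\cdot)$ is only square-integrable and blows up as $s\uparrow t$, so the dominated-convergence argument for the stochastic term requires uniform-in-$n$ control of $\Vert S^b_{t-s}f^x_n\Vert_\mu$ near $s=t$ via Lemma~\ref{thm:hkestim} and an It\=o-isometry estimate that you sketch but do not carry out. Since Proposition~\ref{thm:stoctscty} and Corollary~\ref{thm:leftctsproc} are already needed for your $\beta$ and $\sigma$ to even be predictable, you have the continuous version $\tilde u^{\sto}$ available for free, and the paper's order of operations makes the identification trivial.
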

\begin{proof}
We take $\tilde{u}^{\sto}$ to be the almost surely continuous version of $u^{\sto}$ as defined in Proposition \ref{thm:stoctscty}. Recall $\tilde{u}$ as defined in Corollary \ref{thm:leftctsproc}; we see that
\begin{equation*}
\begin{split}
\tilde{u}^{\sto}(t,x) &= \int_0^t \int_F p^b_{t-s}(x,y) f(s,\tilde{u}(s,y)) \mu(dy)ds\\
&\phantom{=} + \int_0^t \int_F p^b_{t-s}(x,y) g(s,\tilde{u}(s,y)) \xi(s,y) \mu(dy)ds
\end{split}
\end{equation*}
almost surely for each $(t,x) \in [0,T] \times F$. This holds since the fact that $\tilde{u}$ does not agree almost surely with $u$ for $t=0$ has no effect on integrals or stochastic integrals. Define an $\Hcal$-valued predictable process $U = (U(t): t \in [0,T])$ by $U(t) = \tilde{u}^{\sto}(t,\cdot)$, and let $\beta$ and $\sigma$ be defined as in Lemma \ref{thm:betasigma}. Using the connections between the space-time white noise $\xi$ and the cylindrical Wiener process $W$ we can rewrite the definition of $\tilde{u}^{\sto}$ on the level of $\Hcal$:
\begin{equation*}
U(t) = \int_0^t S^b_{t-s} \beta(s) ds + \int_0^t S^b_{t-s} \sigma(s) dW(s)
\end{equation*}
almost surely for each $t \in [0,T]$. This equation can be verified by taking $\langle \cdot,h_i \rangle_\mu$ on both sides where $h_i$ are the elements of an orthonormal basis $\{ h_i \}_i$ of $\Hcal$ and then using (stochastic) Fubini's theorem \cite[Theorem 2.6]{Walsh1986}. Now we see that
\begin{equation*}
\begin{split}
\Ebb\left[ \left( \int_0^T \Vert \beta(s) \Vert_\mu^2 ds \right)^\frac{q}{2} \right] &= \Ebb\left[ \left( \int_0^T \int_F f(s,\tilde{u}(s,x))^2 \mu(dx) ds \right)^\frac{q}{2} \right]\\
&\leq T^{\frac{q}{2} - 1} \Ebb\left[ \int_0^T \int_F |M(s) + C\tilde{u}(s,x)|^q \mu(dx) ds \right]\\
&\leq T^\frac{q}{2} \sup_{(s,x) \in [0,T] \times F} \Ebb\left[ |M(s) + C\tilde{u}(s,x)|^q \right]\\
&< \infty.
\end{split}
\end{equation*}
In addition, $(\omega,s,x) \mapsto g(\omega,s,\tilde{u}(\omega,s,x))$ is evidently jointly measurable and
\begin{equation*}
\begin{split}
\sup_{(s,x) \in [0,T] \times F} \Ebb\left[ | g(s,\tilde{u}(s,x)) |^q \right] &\leq \sup_{(s,x) \in [0,T] \times F} \Ebb\left[ | M(s) + C\tilde{u}(s,x) |^q \right]\\
&< \infty.
\end{split}
\end{equation*}
Therefore Hypothesis \ref{hyp2} holds for $p = \frac{q}{2}$. Thus by applying Theorem \ref{mainthm} there exists a function $\hat{u}: \Omega \times [0,T] \times F$ such that $\hat{u}(t,x)$ is a random variable for each $(t,x) \in [0,T] \times F$, $\hat{u}$ is almost surely continuous in $[0,T] \times F$ with the required H\"older exponents given in the statement of the present proposition, and $\hat{u}(t,\cdot) = U(t) = \tilde{u}^{\sto}(t,\cdot)$ (in the sense of elements of $\Hcal$) almost surely for each $t \in [0,T]$. We now show that $\hat{u}(t,x) = \tilde{u}^{\sto}(t,x)$ almost surely for each $(t,x) \in [0,T] \times F$. But this is clear: $\hat{u}(t,\cdot) = \tilde{u}^{\sto}(t,\cdot)$ in $\Hcal$ almost surely, and both are almost surely continuous in $F$, so we must have that $\hat{u}(t,x) = \tilde{u}^{\sto}(t,x)$ almost surely for all $x \in F$. Now since $[0,T] \times F$ is separable, any two almost surely continuous versions of $u^{\sto}$ must be indistinguishable on their domains of definition. Therefore $\hat{u} = \tilde{u}^{\sto}$ in $[0,T] \times F$ almost surely and the proof is complete.
\end{proof}

\section{Bounds on global solutions and weak intermittency}\label{sec:intermitt}

We seek upper and lower moment bounds on the global solutions to Walsh SPDEs on fractals. A corollary of these results is that a certain class of 
these SPDEs, which includes a version of the parabolic Anderson model, exhibits \textit{intermittency}, see \cite{Khoshnevisan2014} and further references.
\begin{lem}\label{lem:heatbounds}
There exist $c_7,c_8 > 0$ such that
\begin{equation*}
c_7 \left( 1 + t^{-\frac{d_s}{2}} \right) \leq p^N_t(x,x) \leq c_8 \left( 1 + t^{-\frac{d_s}{2}} \right)
\end{equation*}
for all $(t,x) \in (0,\infty) \times F$.
\end{lem}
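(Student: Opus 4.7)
The plan is to split into the two regimes $t \in (0,1]$ and $t \geq 1$ and handle three of the four required inequalities by elementary arguments already developed in the paper, deferring the remaining one to a standard two-sided heat kernel bound.

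\textbf{Upper bound.} For $t \in (0,1]$, Lemma \ref{thm:hkestim}(1) applied with $T = 1$ gives directly $p^N_t(x,x) \leq c_1(1) t^{-d_s/2}$. For $t \geq 1$, the monotonicity of $t \mapsto p^N_t(x,x)$ established in the proof of Lemma \ref{thm:hkestim}(3) yields $p^N_t(x,x) \leq p^N_1(x,x)$, and since $p^N_1$ is continuous on the compact set $F \times F$ (cf.\ \cite[Proposition 5.1.2(1)]{Kigami2001}) it is bounded uniformly in $x$. Combining these two cases gives $p^N_t(x,x) \leq c_8(1 + t^{-d_s/2})$ for an appropriate $c_8 > 0$.

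\textbf{Lower bound for $t \geq 1$.} The remark after Proposition \ref{spectra} gives $\phi^N_1 \equiv 1$ and $\lambda^N_1 = 0$, so the spectral expansion yields
\[
p^N_t(x,x) = 1 + \sum_{k \geq 2} e^{-\lambda^N_k t} \phi^N_k(x)^2 \geq 1,
\]
and for $t \geq 1$ the right-hand side is at least $\tfrac{1}{2}(1 + t^{-d_s/2})$.

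\textbf{The main obstacle is the lower bound for $t \in (0,1]$}, that is, producing $c_7 > 0$ such that $p^N_t(x,x) \geq c_7 t^{-d_s/2}$ uniformly in $x \in F$. For this I would invoke the classical two-sided on-diagonal heat kernel estimate for the Neumann Laplacian on p.c.f.s.s.\ sets with regular harmonic structure, which is available through the general framework in \cite{Kigami2001} (cf.\ Section 5.3) and the work of Kumagai and Barlow--Bass. Alternatively, a self-contained derivation proceeds in two steps: first, the eigenvalue upper bound $\lambda^N_k \leq c_0' k^{2/d_s}$ from Proposition \ref{spectra} combined with comparison to a Gaussian integral gives the \emph{averaged} lower bound
\[
\int_F p^N_t(x,x) \mu(dx) = \sum_{k=1}^\infty e^{-\lambda^N_k t} \geq c\, t^{-d_s/2}
\]
for $t \in (0,1]$; second, this average estimate can be upgraded to a pointwise one by exploiting the uniform upper bound already obtained together with a parabolic Harnack-type inequality, which is known in this setting. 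Assembling the four inequalities across the two regimes and adjusting constants yields the stated two-sided bound.
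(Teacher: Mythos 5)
Your proposal is correct, and its one irreducible ingredient is the same as the paper's: the two-sided on-diagonal heat kernel estimate from \cite[Theorem 5.3.1]{Kigami2001}, which handles the small-$t$ lower bound. The surrounding scaffolding differs, and yours is arguably more elementary. For the large-$t$ regime, the paper argues in one stroke that $x \mapsto p^N_t(x,x)$ converges uniformly to $1$ as $t \to \infty$, using the eigenfunction growth estimate \cite[Theorem 4.5.4]{Kigami2001} to control the tail of the spectral expansion; this delivers both the upper and lower bounds for large $t$ simultaneously. You instead obtain the large-$t$ lower bound directly from $p^N_t(x,x) = 1 + \sum_{k\geq 2} e^{-\lambda^N_k t}\phi^N_k(x)^2 \geq 1$ (using only $\lambda^N_1 = 0$, $\phi^N_1 \equiv 1$), and the large-$t$ upper bound from the monotone decrease of $t \mapsto p^N_t(x,x)$ plus continuity and compactness at $t=1$; this avoids invoking the eigenfunction growth estimate altogether. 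Both routes are fine, and your case-split is cleaner to read. One caveat: your ``alternative self-contained derivation'' of the small-$t$ lower bound via the averaged trace estimate plus a parabolic Harnack upgrade is only a sketch --- upgrading $\int_F p^N_t(x,x)\,\mu(dx) \gtrsim t^{-d_s/2}$ to a uniform pointwise bound genuinely needs the Harnack machinery spelled out, so as written it should be treated as a heuristic rather than a replacement for the citation. Since your primary route is the citation to \cite[Theorem 5.3.1]{Kigami2001}, which is exactly what the paper does, the argument stands.
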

\begin{proof}
Recall that
\begin{equation*}
p^N_t(x,y) = \sum_{k=1}^\infty e^{-\lambda^N_k t} \phi^N_k(x)\phi^N_k(y).
\end{equation*}
Therefore by \cite[Theorem 4.5.4]{Kigami2001} and the fact that $\lambda^N_1 = 0 < \lambda^N_2$ and $\phi^N_1 \equiv 1$, the map $x \mapsto p^N_t(x,x)$ must converge to $1$ uniformly as $t \to \infty$. Then \cite[Theorem 5.3.1]{Kigami2001} implies the result.
\end{proof}
In particular, for all $b \in 2^{F^0}$, we have that $p^b_t(x,x) \leq c_8 \left( 1 + t^{-\frac{d_s}{2}} \right)$ for all $(t,x) \in [0,\infty) \times F$.
\begin{defn}
Let $\kappa: [0,\infty) \to [0,\infty)$ be the function such that $\kappa(0) = 0$ and for $\alpha > 0$,
\begin{equation*}
\kappa(\alpha) \int_0^\infty e^{-\alpha t} \left( 1 + t^{-\frac{d_s}{2}} \right) dt = 1.
\end{equation*}
\end{defn}
Indeed, direct computation yields
\begin{equation}\label{eqn:kappaform}
\kappa(\alpha) = \frac{\alpha}{1 + \alpha^\frac{d_s}{2}\Gamma(1 - d_s/2)}
\end{equation}
where $\Gamma$ is the Gamma function. The following properties are easily verified:
\begin{enumerate}
\item $\kappa$ is continuous in $[0,\infty)$ and continuously differentiable in $(0,\infty)$ with positive first derivative.
\item $\kappa(\alpha) = \Theta(\alpha)$ as $\alpha \to 0$ and $\kappa(\alpha) = \Theta(\alpha^{1-\frac{d_s}{2}})$ as $\alpha \to \infty$.
\end{enumerate}
The above two properties imply that $\kappa$ is a strictly increasing bijection on $[0,\infty)$. Its first derivative is given by
\begin{equation*}
\kappa'(\alpha) = \frac{1 + \alpha^\frac{d_s}{2}\Gamma(2 - d_s / 2)}{(1 + \alpha^\frac{d_s}{2}\Gamma(1 - d_s / 2))^2}.
\end{equation*}
\begin{cor}\label{cor:heatintbd}
For $\alpha > 0$ and $b \in 2^{F^0}$,
\begin{equation*}
\kappa(\alpha) \int_0^\infty e^{-\alpha t} p^b_t(x,x) dt \leq c_8.
\end{equation*}
Additionally in the case $b = N$,
\begin{equation*}
c_7 \leq \kappa(\alpha) \int_0^\infty e^{-\alpha t} p^N_t(x,x) dt.
\end{equation*}
\end{cor}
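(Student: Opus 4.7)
The plan is to simply combine the pointwise heat kernel bounds of Lemma \ref{lem:heatbounds} with the defining integral identity of $\kappa$; no substantial obstacle is expected, since both estimates are already in a form that matches the integrand against which $\kappa$ is calibrated.

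First, for the upper bound, recall from the remark immediately following Lemma \ref{lem:heatbounds} that
\begin{equation*}
p^b_t(x,x) \leq c_8\left(1 + t^{-\frac{d_s}{2}}\right)
\end{equation*}
for all $b \in 2^{F^0}$ and $(t,x) \in (0,\infty) \times F$. Multiplying by $e^{-\alpha t}$ and integrating over $t \in (0,\infty)$, then multiplying by $\kappa(\alpha)$, the definition of $\kappa$ gives
\begin{equation*}
\kappa(\alpha) \int_0^\infty e^{-\alpha t} p^b_t(x,x)\,dt \leq c_8 \cdot \kappa(\alpha)\int_0^\infty e^{-\alpha t}\left(1 + t^{-\frac{d_s}{2}}\right)dt = c_8,
\end{equation*}
as required. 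The integrability at $t = 0$ needed to justify this is immediate from $d_s < 2$, which makes $t^{-d_s/2}$ integrable near the origin.

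Second, for the lower bound in the case $b = N$, Lemma \ref{lem:heatbounds} provides the matching lower inequality $p^N_t(x,x) \geq c_7(1 + t^{-d_s/2})$ for all $(t,x) \in (0,\infty) \times F$. The same calculation as above, with the inequality reversed and $c_8$ replaced by $c_7$, yields
\begin{equation*}
\kappa(\alpha) \int_0^\infty e^{-\alpha t} p^N_t(x,x)\,dt \geq c_7 \cdot \kappa(\alpha)\int_0^\infty e^{-\alpha t}\left(1 + t^{-\frac{d_s}{2}}\right)dt = c_7,
\end{equation*}
completing the proof. The only point to verify is that the integrals are finite so that the manipulations are valid, and this follows from $d_s \in [1,2)$ together with $\alpha > 0$.
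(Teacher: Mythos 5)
Your argument is exactly what the paper intends: combine the pointwise bounds $c_7(1+t^{-d_s/2}) \leq p^N_t(x,x)$ and $p^b_t(x,x) \leq c_8(1+t^{-d_s/2})$ from Lemma \ref{lem:heatbounds} (and the remark after it) with the defining normalisation of $\kappa$, then integrate. The paper proves the corollary "directly from Lemma \ref{lem:heatbounds} and subsequent discussion," so your proof is correct and takes the same route.
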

\begin{proof}
Directly from Lemma \ref{lem:heatbounds} and subsequent discussion.
\end{proof}

\subsection{Upper moment bound}

Let $b \in 2^{F^0}$ and consider the following SPDE on $F$ for time $t \in [0,\infty)$:
\begin{equation}\label{eqn:compSPDE1}
\begin{split}
\frac{\partial u}{\partial t}(t,x) &= \Delta_b u(t,x) + f(t,u(t,x)) + g(t,u(t,x))\xi(t,x),\\
u(0,x) &= u_0(x).
\end{split}
\end{equation}
\begin{hyp}\label{hyp:easywalshhyp}
We make the following assumptions:
\begin{enumerate}
\item $u_0: F \to \Rbb$ is measurable and bounded.
\item $f,g: \Omega \times [0,\infty) \times \Rbb \to \Rbb$ are functions which are measurable from $\Pcal \otimes \Bcal(\Rbb)$ to $\Bcal(\Rbb)$. There exists a constant $C > 0$ such that for all $(\omega,t) \in \Omega \times [0,\infty)$ and all $x,y \in \Rbb$,
\begin{equation*}
\begin{split}
|f(\omega,t,x) - f(\omega,t,y)| + |g(\omega,t,x) - g(\omega,t,y)| &\leq C|x - y|,\\
|f(\omega,t,x)| + |g(\omega,t,x)| &\leq C(1+|x|).
\end{split}
\end{equation*}
\end{enumerate}
\end{hyp}
We may now find an upper bound for the moments of the solution to \eqref{eqn:compSPDE1}. Compare \cite[Theorem 5.5]{Khoshnevisan2014}.
\begin{thm}[General moment upper bound]\label{thm:uppermmtbd}
Assume Hypothesis \ref{hyp:easywalshhyp}. Let $u$ be the solution to \eqref{eqn:compSPDE1}. Then there exist $c_9,c_{10} > 0$ such that for all $p \geq 1$ and all $(t,x) \in [0,\infty) \times F$,
\begin{equation*}
\Ebb \left[\left| u(t,x) \right|^p \right]^\frac{1}{p} \leq c_9 \exp\left(c_{10} p^{1+d_H} t\right).
\end{equation*}
\end{thm}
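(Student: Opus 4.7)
The plan is to apply Minkowski's inequality and the Burkholder--Davis--Gundy (BDG) inequality to the mild formulation \eqref{WSPDEmild}, then close a fixed-point inequality by inserting an exponential weight whose rate is tuned through the function $\kappa$. It suffices to prove the bound for $p \geq 2$, since monotonicity of $L^p$-norms handles $p \in [1,2)$.

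Write $N_p(t) := \sup_{x \in F} \Ebb[|u(t,x)|^p]^{1/p}$ and let $K_p$ denote the BDG constant for the Walsh stochastic integral, which satisfies $K_p \leq K\sqrt{p}$ for some absolute $K$. The initial-condition term contributes at most $\sup_{y \in F} |u_0(y)|$ by the ``moreover'' clause of Lemma \ref{thm:heqcts1}. Minkowski's integral inequality plus the linear-growth hypothesis on $f$ gives
\[
\Ebb\bigl[\bigl| \textstyle\int_0^t \int_F p^b_{t-s}(x,y) f(s,u(s,y)) \mu(dy) ds \bigr|^p\bigr]^{1/p} \leq C\int_0^t (1 + N_p(s)) ds.
\]
For the noise term, applying BDG, then Minkowski in the $L^{p/2}$-norm (using $p\geq 2$), the linear-growth bound on $g$, and finally the identity $\int_F p^b_{t-s}(x,y)^2 \mu(dy) = p^b_{2(t-s)}(x,x)$, I obtain
\[
\Ebb\bigl[\bigl| \textstyle\int_0^t\int_F p^b_{t-s}(x,y) g(s,u(s,y)) \xi(s,y) \mu(dy) ds \bigr|^p\bigr]^{1/p} \leq CK_p \bigl(\textstyle\int_0^t p^b_{2(t-s)}(x,x) (1 + N_p(s))^2 ds\bigr)^{1/2}.
\]

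Now fix $\alpha > 0$ and set $\mathcal{N}_{p,\alpha} := \sup_{t \geq 0} e^{-\alpha t} N_p(t)$; provisionally assume this quantity is finite. Substituting $1 + N_p(s) \leq (1 + \mathcal{N}_{p,\alpha}) e^{\alpha s}$ into the two displays, dividing by $e^{\alpha t}$, and changing variable $r = t - s$ in the integrals gives
\[
\mathcal{N}_{p,\alpha} \leq \sup_{y \in F}|u_0(y)| + \Bigl( \tfrac{C}{\alpha} + CK_p \bigl(\textstyle\int_0^\infty p^b_{2r}(x,x) e^{-2\alpha r} dr\bigr)^{1/2} \Bigr)(1 + \mathcal{N}_{p,\alpha}).
\]
The substitution $s = 2r$ together with Corollary \ref{cor:heatintbd} reduces the remaining integral to at most $c_8 / (2\kappa(\alpha))$. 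Since $1 - d_s/2 = (d_H+1)^{-1}$ one has $\kappa(\alpha) = \Theta(\alpha^{1/(d_H+1)})$ as $\alpha \to \infty$, so choosing $\alpha = c_{10} p^{d_H+1}$ with $c_{10}$ sufficiently large and independent of $p$ makes the bracketed coefficient at most $\tfrac{1}{2}$ uniformly in $p \geq 2$. Rearranging yields $\mathcal{N}_{p,\alpha} \leq 2\sup_{y\in F}|u_0(y)| + 1$, i.e.\ $N_p(t) \leq c_9 \exp(c_{10} p^{1 + d_H} t)$.

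The main technical subtlety is justifying $\mathcal{N}_{p,\alpha} < \infty$ a priori. For this I would run the same derivation on the Picard iterates $(u_n)$ constructed in the proof of Theorem \ref{thm:exist}: each $u_n \in \Scal_{p,T}$ for every finite $T$, so the truncated weighted norms $\mathcal{N}_{p,\alpha}^{(n),T}$ are finite, and the inequality above yields $\mathcal{N}_{p,\alpha}^{(n+1),T} \leq \sup|u_0| + \tfrac{1}{2}(1 + \mathcal{N}_{p,\alpha}^{(n),T})$ with $\alpha$ chosen as above. An induction starting from $u_1 \equiv 0$ bounds $\mathcal{N}_{p,\alpha}^{(n),T}$ uniformly in $n$ and $T$, and the $L^p$-convergence $u_n \to u$ in $\Scal_{p,T}$ combined with Fatou transfers the bound to $u$; letting $T \to \infty$ gives the global estimate. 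The delicate quantitative point is tracking the $\sqrt{p}$ growth of the BDG constant, since this is precisely what combines with the asymptotic $\kappa(\alpha) \sim \alpha^{1/(d_H+1)}$ to produce the exponent $p^{1 + d_H}$ rather than a larger power.
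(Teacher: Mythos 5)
Your proposal follows the same strategy as the paper: decompose the mild solution into initial, drift, and noise terms, apply Minkowski's integral inequality and a BDG-type bound (the paper cites \cite[Theorem B.1]{Khoshnevisan2014} for the $2\sqrt{p}$ constant), insert the weight $e^{-\alpha t}$, reduce the resulting time integral via $\int_F p^b_{t-s}(x,y)^2\mu(dy) = p^b_{2(t-s)}(x,x)$ and Corollary \ref{cor:heatintbd}, and tune $\alpha = c_{10}p^{1+d_H}$ so the self-referential coefficient drops below $\tfrac12$. Your observation that the $\sqrt{p}$ BDG growth and the $\kappa(\alpha)\sim\alpha^{1/(d_H+1)}$ asymptotics conspire to produce the exponent $1+d_H$ is exactly the mechanism at work.

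The only divergence is the treatment of a priori finiteness. You take the weighted supremum $\mathcal{N}_{p,\alpha}$ over all $t\geq 0$, which is not obviously finite, and propose to close the argument by iterating the bound over the Picard sequence. That is correct but more work than necessary. The paper's inequality is stated with $\sup_{(s,y)\in[0,t]\times F}$ on the right-hand side, i.e.\ the supremum is over the compact time window $[0,t]$ rather than $[0,\infty)$. Since Hypothesis \ref{hyp:easywalshhyp} implies Hypothesis \ref{walshhyp} for every $q\geq 2$, Theorem \ref{thm:exist} already gives $u\in\Scal_{p,T}$ for every finite $T$, so the truncated weighted supremum is finite and the rearrangement is immediate; one then lets $t$ (and hence $T$) be arbitrary. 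This yields the same conclusion without revisiting the Picard iteration, so if you keep your version, it would be cleaner to replace the $[0,\infty)$-supremum with the $[0,t]$-supremum from the start.
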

\begin{proof}
Let $p \geq 2$ and $\alpha > 0$. Let $(t,x) \in [0,\infty) \times F$. We see that
\begin{equation*}
\begin{split}
e^{-\alpha t}\Ebb\left[ |u(t,x)|^p \right]^\frac{1}{p} \leq
& e^{-\alpha t} \left| \int_F p^b_t(x,y) u_0(y) \mu(dy)\right|\\
&+ \Ebb\left[\left|\int_0^t \int_F e^{-\alpha t}p^b_{t-s}(x,y) f(s,u(s,y)) \mu(dy)ds\right|^p\right]^\frac{1}{p}\\
&+ \Ebb\left[\left|\int_0^t \int_F e^{-\alpha t} p^b_{t-s}(x,y) g(s,u(s,y)) \xi(s,y) \mu(dy)ds\right|^p\right]^\frac{1}{p},
\end{split}
\end{equation*}
which implies
\begin{equation*}
\begin{split}
e^{-\alpha t}\Ebb\left[ |u(t,x)|^p \right]^\frac{1}{p} \leq
& \sup_{x \in F}|u_0(x)|\\
&+ \Ebb\left[\left|\int_0^t \int_F e^{-\alpha t}p^b_{t-s}(x,y) f(s,u(s,y)) \mu(dy)ds\right|^p\right]^\frac{1}{p}\\
&+ 2 \sqrt{p} \Ebb\left[\left|\int_0^t \int_F e^{-2\alpha t} p^b_{t-s}(x,y)^2 g(s,u(s,y))^2 \mu(dy)ds\right|^\frac{p}{2} \right]^\frac{1}{p},
\end{split}
\end{equation*}
where we have used \cite[Theorem B.1]{Khoshnevisan2014}. We treat the two integrals on the right-hand side separately. Firstly,
\begin{equation*}
\begin{split}
\Ebb &\left[\left|\int_0^t \int_F e^{-\alpha t}p^b_{t-s}(x,y) f(s,u(s,y)) \mu(dy)ds\right|^p\right]^\frac{1}{p}\\
&\leq \int_0^t \int_F e^{-\alpha t}p^b_{t-s}(x,y) \Ebb\left[\left| f(s,u(s,y))\right|^p\right]^\frac{1}{p} \mu(dy)ds\\
&\leq C\int_0^t \int_F e^{-\alpha t}p^b_{t-s}(x,y) \left( 1 + \Ebb\left[\left| u(s,y)\right|^p\right]^\frac{1}{p}\right) \mu(dy)ds\\
&\leq C\left( te^{-\alpha t} + \int_0^t \int_F e^{-\alpha t}p^b_{t-s}(x,y) \sup_{y' \in F}\left( \Ebb\left[\left| u(s,y')\right|^p\right]^\frac{1}{p} \right) \mu(dy)ds \right)\\
&\leq \frac{C}{e\alpha} + C\int_0^t e^{-\alpha (t-s)} \sup_{y \in F}\left( e^{-\alpha s} \Ebb\left[\left| u(s,y)\right|^p\right]^\frac{1}{p} \right)ds\\
&\leq \frac{C}{\alpha} \left( 1 + \sup_{(s,y) \in [0,t] \times F} \left( e^{-\alpha s} \Ebb\left[\left| u(s,y)\right|^p\right]^\frac{1}{p} \right) \right),
\end{split}
\end{equation*}
and secondly
\begin{equation*}
\begin{split}
\Ebb &\left[\left|\int_0^t \int_F e^{-2\alpha t} p^b_{t-s}(x,y)^2 g(s,u(s,y))^2 \mu(dy)ds\right|^\frac{p}{2} \right]^\frac{1}{p}\\
&\leq \left(\int_0^t \int_F e^{-2\alpha t} p^b_{t-s}(x,y)^2 \Ebb \left[\left| g(s,u(s,y))\right|^p \right]^\frac{2}{p} \mu(dy)ds \right)^\frac{1}{2}\\
&\leq C\left(\int_0^t \int_F e^{-2\alpha t} p^b_{t-s}(x,y)^2 \sup_{y' \in F}\left(1 + \Ebb \left[\left| u(s,y') \right|^p \right]^\frac{1}{p} \right)^2 \mu(dy)ds \right)^\frac{1}{2}\\
&\leq C\left(\int_0^t e^{-2\alpha (t-s)} p^b_{2(t-s)}(x,x) \sup_{y \in F}\left(e^{-\alpha s} + e^{-\alpha s}\Ebb \left[\left| u(s,y) \right|^p \right]^\frac{1}{p} \right)^2 ds \right)^\frac{1}{2}\\
&\leq C \sup_{(s,y) \in [0,t] \times F}\left(e^{-\alpha s} + e^{-\alpha s}\Ebb \left[\left| u(s,y) \right|^p \right]^\frac{1}{p} \right)\left(\int_0^t e^{-2\alpha (t-s)} p^b_{2(t-s)}(x,x) ds \right)^\frac{1}{2}\\
&\leq \frac{C}{\sqrt{2}} \sup_{(s,y) \in [0,t] \times F}\left(1 + e^{-\alpha s}\Ebb \left[\left| u(s,y) \right|^p \right]^\frac{1}{p} \right)\left(\int_0^\infty e^{-\alpha s} p^b_{s}(x,x) ds \right)^\frac{1}{2}\\
&\leq C \sqrt{\frac{c_8}{2\kappa(\alpha)}} \left( 1+ \sup_{(s,y) \in [0,t] \times F}\left( e^{-\alpha s}\Ebb \left[\left| u(s,y) \right|^p \right]^\frac{1}{p} \right) \right).\\
\end{split}
\end{equation*}
Putting these estimates together we see that for all $(t,x) \in [0,\infty) \times F$,
\begin{equation*}
\begin{split}
e^{-\alpha t}&\Ebb\left[ |u(t,x)|^p \right]^\frac{1}{p} \\
&\leq\sup_{x \in F}|u_0(x)| + C\left( \frac{1}{\alpha} + \sqrt{\frac{2c_8p}{\kappa(\alpha)}} \right) \left( 1+ \sup_{(s,y) \in [0,t] \times F}\left( e^{-\alpha s}\Ebb \left[\left| u(s,y) \right|^p \right]^\frac{1}{p} \right) \right).
\end{split}
\end{equation*}
Now we pick $\alpha$ such that $C\left( \frac{1}{\alpha} + \sqrt{\frac{2c_8p}{\kappa(\alpha)}} \right) \leq \frac{1}{2}$. By the asymptotic properties of $\kappa$, it is possible to find $c_{10} > 0$ such that the choice
\begin{equation*}
\alpha = \alpha(p) := c_{10} p^{(1-\frac{d_s}{2})^{-1}}
\end{equation*}
works for all $p \geq 2$, and note that $(1-\frac{d_s}{2})^{-1} = 1+d_H$. Thus for all $p \geq 2$,
\begin{equation*}
\sup_{(s,x) \in [0,t] \times F} \left( e^{-c_{10} p^{1+d_H} s}\Ebb \left[\left| u(s,x) \right|^p \right]^\frac{1}{p} \right) \leq 2\sup_{x \in F}|u_0(x)| + 1.
\end{equation*}
Let $c_9 = 2\sup_{x \in F}|u_0(x)| + 1$. Thus for all $p \geq 2$ and $(t,x) \in [0,\infty) \times F$,
\begin{equation*}
\Ebb \left[\left| u(t,x) \right|^p \right]^\frac{1}{p} \leq c_9 \exp\left(c_{10} p^{1+d_H} t\right).
\end{equation*}
The analogous result for $p \in [1,2)$ follows via Jensen's inequality (and a minor adjustment of the value of $c_{10}$).
\end{proof}

\subsection{Weak intermittency}

Consider the following SPDE on $F$ for time $t \in [0,\infty)$:
\begin{equation}\label{eqn:compSPDE2}
\begin{split}
\frac{\partial u}{\partial t}(t,x) &= \Delta_N u(t,x) + g(u(t,x))\xi(t,x),\\
u(0,x) &= u_0(x).
\end{split}
\end{equation}
\begin{hyp}\label{hyp:lowerhyp}
We make the following assumptions:
\begin{enumerate}
\item $u_0: F \to \Rbb$ is measurable, bounded and non-negative.
\item $g:\Rbb \to \Rbb$ is Lipschitz: there exists a constant $C > 0$ such that for all $x,y \in \Rbb$,
\begin{equation*}
\begin{split}
|g(x) - g(y)| &\leq C|x - y|,\\
|g(x)| &\leq C(1+|x|).
\end{split}
\end{equation*}
\end{enumerate}
\end{hyp}

\begin{thm}[Limiting second moment lower bound]\label{thm:2ndlower}
There exists $c_{11} > 0$ such that the following holds: Assume Hypothesis \ref{hyp:lowerhyp}. Let $u$ be the solution to \eqref{eqn:compSPDE2}, and let $I(t) = \inf_{x \in F}\Ebb\left[ u(t,x)^2\right]$ for $t \in [0,\infty)$. Then
\begin{equation*}
\liminf_{t \to \infty} \left[ \exp\left(-c_{11}\kappa^{-1}(L_g^2) t\right) I(t) \right] \geq \inf_{x \in F} u_0(x)^2,
\end{equation*}
where
\begin{equation*}
L_g = \inf_{z \in \Rbb \setminus \{ 0 \}} \left| \frac{g(z)}{z} \right|.
\end{equation*}
\end{thm}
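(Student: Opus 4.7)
The plan is to reduce the proof to a scalar renewal-type integral inequality for $I(t) = \inf_{x \in F} \Ebb[u(t,x)^2]$ and then to solve it via Laplace transform. First I would use the mild formulation of \eqref{eqn:compSPDE2} together with the Walsh isometry (the stochastic integral is centred and orthogonal in $L^2$ to the $\Fcal_0$-measurable drift) to write
\[ \Ebb[u(t,x)^2] = \left(\int_F p^N_t(x,y) u_0(y) \mu(dy)\right)^2 + \int_0^t \int_F p^N_{t-s}(x,y)^2 \Ebb[g(u(s,y))^2]\mu(dy)\,ds. \]
The choice $b=N$ is essential: Neumann conservation gives $\int_F p^N_t(x,y)\mu(dy) \equiv 1$, so combined with $u_0 \geq 0$ the first term is at least $I_0 := (\inf_y u_0(y))^2$.

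Next I would apply three lower bounds to the stochastic term in sequence: (i) the definition of $L_g$ gives $g(z)^2 \geq L_g^2 z^2$, allowing $\Ebb[u(s,y)^2] \geq I(s)$ to be pulled outside the inner integral; (ii) Chapman--Kolmogorov yields $\int_F p^N_{t-s}(x,y)^2 \mu(dy) = p^N_{2(t-s)}(x,x)$; (iii) Lemma~\ref{lem:heatbounds} supplies $p^N_{2u}(x,x) \geq c_7(1 + (2u)^{-d_s/2})$. Taking the infimum over $x$ (which only affects the already-handled drift term) gives the renewal-type inequality
\[ I(t) \geq I_0 + c_7 L_g^2 \int_0^t \bigl(1 + (2(t-s))^{-d_s/2}\bigr) I(s)\,ds. \]

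Then I would apply the Laplace transform. Theorem~\ref{thm:uppermmtbd} with $p=2$ guarantees that $I$ grows at most exponentially, so $\tilde I(\alpha) := \int_0^\infty e^{-\alpha t} I(t)\,dt$ is finite for all large enough $\alpha$. The substitution $u = 2t$ combined with the defining relation $\kappa(\alpha)\int_0^\infty e^{-\alpha t}(1+t^{-d_s/2})\,dt = 1$ yields $\int_0^\infty e^{-\alpha t}(1 + (2t)^{-d_s/2})\,dt = 1/(2\kappa(\alpha/2))$, so the transformed inequality becomes
\[ \tilde I(\alpha) \geq \frac{I_0}{\alpha} + \frac{c_7 L_g^2}{2\kappa(\alpha/2)}\tilde I(\alpha). \]
Assuming $I_0 > 0$ (the case $I_0=0$ being trivial), whenever $\alpha \leq 2\kappa^{-1}(c_7 L_g^2/2)$ the multiplier on the right is $\geq 1$, and any finite value of $\tilde I(\alpha)$ would force the right-hand side to exceed the left. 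Hence $\tilde I(\alpha) = +\infty$ for every such $\alpha$, so the Laplace abscissa of convergence of $I$ is at least $\alpha^\ast := 2\kappa^{-1}(c_7 L_g^2/2)$.

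Finally I would upgrade this to the liminf statement by comparing $I$ with the solution $\beta$ of the corresponding renewal \emph{equality} $\beta(t) = I_0 + c_7 L_g^2 \int_0^t(1+(2(t-s))^{-d_s/2})\beta(s)\,ds$. Iteration of the Neumann series yields $I \geq \beta$, and the explicit transform $\tilde\beta(\alpha) = (I_0/\alpha)/(1 - c_7 L_g^2/(2\kappa(\alpha/2)))$ has a simple pole with positive residue at $\alpha^\ast$, so Laplace inversion gives $\beta(t) \sim Ce^{\alpha^\ast t}$ for some $C > 0$; in particular $\liminf_{t\to\infty} e^{-\alpha t}I(t) = +\infty$ for every $\alpha < \alpha^\ast$. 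It remains to fix a universal $c_{11} > 0$ with $c_{11}\kappa^{-1}(L_g^2) < \alpha^\ast$ uniformly in $L_g > 0$; since \eqref{eqn:kappaform} yields $\kappa(y) = \Theta(y)$ as $y \to 0$ and $\kappa(y) = \Theta(y^{1-d_s/2})$ as $y \to \infty$, the ratio $\kappa^{-1}(c_7 L_g^2/2)/\kappa^{-1}(L_g^2)$ is continuous on $(0,\infty)$ with positive limits at $0$ and $\infty$, hence bounded below by a positive constant depending only on $c_7$ and $d_s$, so such a $c_{11}$ exists. The hard part will be justifying the renewal-theoretic step rigorously: passing from the Laplace blow-up to a pointwise exponential lower bound on $\beta(t)$ needs either an adaptation of the key renewal theorem to the non-integrable kernel $K(u) = 1 + (2u)^{-d_s/2}$, or a direct estimate of the Neumann-series iterates $\sum_k (c_7 L_g^2)^k K^{\ast k}$.
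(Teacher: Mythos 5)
Your reduction to the renewal inequality is essentially the paper's: same use of Walsh's isometry, same use of Neumann conservation $\int_F p^N_t(x,y)\mu(dy) = 1$, same lower bounds via $L_g$, Chapman--Kolmogorov, and Lemma~\ref{lem:heatbounds}. (Your kernel $c_7L_g^2(1+(2(t-s))^{-d_s/2})$ is slightly tighter than the paper's $2^{-d_s/2}c_7L_g^2(1+(t-s)^{-d_s/2})$, but that's cosmetic.) The divergence is in how you try to extract the liminf from the renewal inequality, and this is where your plan has a genuine gap that you yourself flag.

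You propose to show the Laplace abscissa of convergence of $I$ is at least $\alpha^*$, then compare with the solution $\beta$ of the renewal \emph{equality}, compute $\tilde\beta(\alpha)$ explicitly, observe a pole at $\alpha^*$, and invert. The problem is twofold. First, divergence of $\int_0^\infty e^{-\alpha t} I(t)\,dt$ for $\alpha < \alpha^*$ does not yield any lower bound on $\liminf_{t\to\infty} e^{-\alpha t}I(t)$; the integral can diverge while the pointwise quantity oscillates down to zero. Second, the step ``simple pole at $\alpha^*$ with positive residue $\Rightarrow \beta(t)\sim Ce^{\alpha^* t}$'' is a Tauberian/Wiener--Ikehara-type claim that you do not justify: $\tilde\beta$ involves $\kappa(\alpha/2)$, hence the multivalued $\alpha^{d_s/2}$, so you would have to control the branch-cut contribution and rule out other singularities on the line $\mathrm{Re}\,\alpha = \alpha^*$ before inversion gives the asserted asymptotics. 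Your final paragraph acknowledges this is ``the hard part''; it is, and it is not filled in.

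The paper avoids all of this by a purely real-variable argument, and the key observation is one you actually stumbled on but did not exploit: after multiplying the renewal inequality by $e^{-\alpha t}$, the defining relation for $\kappa$ makes the tilted kernel $s\mapsto\kappa(\alpha)e^{-\alpha s}(1+s^{-d_s/2})$ a \emph{probability density} on $[0,\infty)$ when $\alpha$ is chosen so that the prefactor equals $\kappa(\alpha)$, i.e.\ $\alpha = \kappa^{-1}(2^{-d_s/2}c_7 L_g^2)$. Then $t\mapsto e^{-\alpha t}I(t)$ is a supersolution of a proper renewal equation with a probability kernel. Feller's classical renewal theorem (\cite[Theorem XI.1.2]{Feller1966}) gives the limit of the unique bounded solution $f$ of that equation directly, and the comparison lemma \cite[Lemma A.2]{Georgiou2015} (which needs the a priori exponential upper bound from Theorem~\ref{thm:uppermmtbd} to apply) gives $e^{-\alpha t}I(t)\geq f(t)$, hence the liminf. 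The final step relating $\alpha$ to $c_{11}\kappa^{-1}(L_g^2)$ is the elementary asymptotics of $\kappa$ you already computed, so that part of your plan is fine. If you replace your Laplace-inversion step with ``tilt so the kernel is a probability density, apply Feller's renewal theorem, then compare via \cite[Lemma A.2]{Georgiou2015}'', the argument closes without any complex-analytic work.
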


\begin{proof}
This is similar to the proof of \cite[Theorem 7.8]{Khoshnevisan2014}. Notice that since $u_0$ is non-negative, for all $x \in F$ we have that 
\begin{equation*}
\int_F p^N_t(x,y) u_0(y) \mu(dy) \geq \inf_{y \in F} u_0(y) \int_F p^N_t(x,y) \mu(dy) = \inf_{y \in F} u_0(y).
\end{equation*}
Using the mild formulation of the solution we see that
\begin{equation*}
\begin{split}
&\Ebb\left[ u(t,x)^2 \right] \\
&= \left( \int_F p^N_t(x,y) u_0(y) \mu(dy) \right)^2 + \Ebb\left[\left( \int_0^t \int_F p^N_{t-s}(x,y) g(u(s,y)) \xi(s,y) \mu(dy)ds \right)^2\right]\\
&\geq \inf_{x \in F} u_0(x)^2 + \int_0^t \int_F p^N_{t-s}(x,y)^2 \Ebb\left[ g(u(s,y))^2\right] \mu(dy)ds\\
&\geq \inf_{x \in F} u_0(x)^2 + \int_0^t \int_F L_g^2 p^N_{t-s}(x,y)^2 \Ebb\left[ u(s,y)^2\right] \mu(dy)ds
\end{split}
\end{equation*}
for all $(t,x) \in [0,\infty) \times F$. It follows by Lemma \ref{lem:heatbounds} that
\begin{equation*}
\begin{split}
I(t) &\geq \inf_{x \in F} u_0(x)^2 + \int_0^t \int_F L_g^2 p^N_{t-s}(x,y)^2 I(s) \mu(dy)ds\\
&= \inf_{x \in F} u_0(x)^2 + \int_0^t L_g^2 p^N_{2(t-s)}(x,x) I(s) ds\\
&\geq \inf_{x \in F} u_0(x)^2 + \int_0^t 2^{-\frac{d_s}{2}}c_7L_g^2 \left( 1 + (t-s)^{-\frac{d_s}{2}} \right) I(s) ds.
\end{split}
\end{equation*}
Now if $L_g = 0$ then the result is clear. Henceforth we assume that $L_g > 0$. Recall that the function $\kappa$ is strictly increasing and bijective on $[0,\infty)$. Let
\begin{equation*}
\alpha = \kappa^{-1} \left(2^{-\frac{d_s}{2}}c_1L_g^2\right) > 0,
\end{equation*}
then we see that
\begin{equation*}
e^{-\alpha t}I(t) \geq e^{-\alpha t}\inf_{x \in F} u_0(x)^2 + \int_0^t \kappa(\alpha) e^{-\alpha(t-s)} \left( 1 + (t-s)^{-\frac{d_s}{2}} \right) e^{-\alpha s} I(s) ds
\end{equation*}
for all $t \in [0,\infty)$. Since
\begin{equation*}
\int_0^\infty \kappa(\alpha) e^{-\alpha s} \left( 1 + s^{-\frac{d_s}{2}} \right) ds = 1,
\end{equation*}
it follows that $t \mapsto e^{-\alpha t}I(t)$ is a non-negative supersolution (in the sense of \cite[Definition 7.10]{Khoshnevisan2014}) to the renewal-type equation
\begin{equation}\label{eqn:renewal}
f(t) = e^{-\alpha t}\inf_{x \in F} u_0(x)^2 + \int_0^t \kappa(\alpha) e^{-\alpha(t-s)} \left( 1 + (t-s)^{-\frac{d_s}{2}} \right) f(s) ds.
\end{equation}
By classical renewal theory \cite[Theorem XI.1.2]{Feller1966}, \eqref{eqn:renewal} has a unique non-negative bounded solution $f: [0,\infty) \to \Rbb$ for which
\begin{equation*}
\lim_{t \to \infty} f(t) = \frac{\int_0^\infty e^{-\alpha t}\inf_{x \in F} u_0(x)^2 dt}{\kappa(\alpha) \int_0^\infty t e^{-\alpha t} \left( 1 + t^{-\frac{d_s}{2}} \right) dt}.
\end{equation*}
We compute the upper integral and use an elementary result on derivatives of Laplace transforms \cite[XIII.2(ii)]{Feller1966} on the lower integral. This gives
\begin{equation*}
\begin{split}
\lim_{t \to \infty} f(t) &= \frac{\alpha^{-1}\inf_{x \in F} u_0(x)^2}{\kappa(\alpha) \cdot \frac{\kappa'(\alpha)}{\kappa(\alpha)^2}}\\
&= \frac{1 + \alpha^\frac{d_s}{2}\Gamma(1 - d_s / 2)}{1 + \alpha^\frac{d_s}{2}\Gamma(2 - d_s / 2)} \inf_{x \in F} u_0(x)^2\\
&= \frac{1 + \alpha^\frac{d_s}{2}\Gamma(1 - d_s / 2)}{1 + \alpha^\frac{d_s}{2}(1 - d_s / 2)\Gamma(1 - d_s / 2)} \inf_{x \in F} u_0(x)^2,
\end{split}
\end{equation*}
and it follows that
\begin{equation*}
\lim_{t \to \infty} f(t) \geq \inf_{x \in F} u_0(x)^2
\end{equation*}
for any $\alpha > 0$. Now observe: by Theorem \ref{thm:uppermmtbd} there exists $\beta \geq 0$ such that
\begin{equation*}
\sup_{t \in [0,\infty)}\left( e^{-\beta t} e^{-\alpha t}I(t) \right) < \infty.
\end{equation*}
Therefore by \cite[Lemma A.2]{Georgiou2015}, $e^{-\alpha t}I(t) \geq f(t)$ for all $t \in [0,\infty)$. Finally, there can easily be found a constant $c_{11} > 0$ such that
\begin{equation*}
2^{-\frac{d_s}{2}}c_7 \kappa(x) \geq \kappa(c_5x)
\end{equation*}
for all $x \geq 0$. Taking $x = \kappa^{-1}(L_g^2)$ this implies that $\alpha \geq c_{11}\kappa^{-1}(L_g^2)$. So
\begin{equation*}
\exp(-c_{11}\kappa^{-1}(L_g^2) t)I(t) \geq f(t)
\end{equation*}
for all $t \in [0,\infty)$. The result follows.
\end{proof}
\begin{cor}[Weak intermittency]
Assume Hypothesis \ref{hyp:lowerhyp}. Suppose that
\begin{equation*}
\begin{split}
\inf_{z \in \Rbb \setminus \{ 0 \}} \left| \frac{g(z)}{z} \right| &> 0,\\
\inf_{x \in F} u_0(x) &> 0.
\end{split}
\end{equation*}
Then the solution to \eqref{eqn:compSPDE2} is weakly intermittent in the sense of \cite[Definition 7.7]{Khoshnevisan2014}.
\end{cor}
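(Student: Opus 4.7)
The plan is to verify the two conditions defining weak intermittency directly from the results already established in this section: the general upper moment bound of Theorem~\ref{thm:uppermmtbd} and the limiting second moment lower bound of Theorem~\ref{thm:2ndlower}. Weak intermittency in the sense of \cite[Definition 7.7]{Khoshnevisan2014} requires that the upper Lyapunov exponent of order $2$ is strictly positive, while the upper Lyapunov exponents of all orders $p \geq 2$ are finite, so it suffices to produce these two bounds.

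For the upper bounds, I would simply apply Theorem~\ref{thm:uppermmtbd} to the SPDE \eqref{eqn:compSPDE2}, which is a special case of \eqref{eqn:compSPDE1} satisfying Hypothesis~\ref{hyp:easywalshhyp} (the Lipschitz and linear growth conditions on $g$ extend to the pair $(f,g) = (0,g)$, and the boundedness of $u_0$ is part of Hypothesis~\ref{hyp:lowerhyp}). Raising the bound in that theorem to the $p$-th power yields
\[
\sup_{x \in F} \Ebb\bigl[|u(t,x)|^p\bigr] \leq c_9^p \exp\bigl( c_{10} p^{2+d_H} t \bigr),
\]
so $\limsup_{t \to \infty} t^{-1} \log \sup_{x \in F} \Ebb[|u(t,x)|^p] \leq c_{10}\, p^{2+d_H} < \infty$ for every $p \geq 2$.

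For the lower bound at $p = 2$, the two positivity hypotheses $L_g > 0$ and $\inf_{x \in F} u_0(x) > 0$ are exactly what is needed to extract a nontrivial exponential growth rate from Theorem~\ref{thm:2ndlower}. That theorem gives
\[
\liminf_{t \to \infty}\bigl[\exp(-c_{11}\kappa^{-1}(L_g^2) t)\, I(t)\bigr] \geq \inf_{x \in F} u_0(x)^2 > 0,
\]
so there exists $t_0$ and $c > 0$ with $I(t) \geq c \exp(c_{11}\kappa^{-1}(L_g^2) t)$ for all $t \geq t_0$. Since $\kappa$ is a strictly increasing bijection of $[0,\infty)$ with $\kappa(0) = 0$, we have $\kappa^{-1}(L_g^2) > 0$, and therefore
\[
\liminf_{t \to \infty}\frac{1}{t} \inf_{x \in F} \log \Ebb\bigl[u(t,x)^2\bigr] \geq c_{11}\kappa^{-1}(L_g^2) > 0.
\]
Combining this strictly positive lower Lyapunov exponent at $p=2$ with the finiteness of the upper exponents at all $p \geq 2$ from the previous paragraph gives weak intermittency. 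No step presents a real obstacle; the work has already been done in Theorems~\ref{thm:uppermmtbd} and~\ref{thm:2ndlower}, and the corollary is a clean packaging of their consequences under the positivity assumptions on $g$ and $u_0$.
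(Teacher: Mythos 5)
Your proof is correct and matches the paper's approach, which simply cites Theorem~\ref{thm:2ndlower} as directly implying the result. You are in fact more careful than the one-line proof in the paper: you explicitly verify the finiteness of the upper Lyapunov exponents via Theorem~\ref{thm:uppermmtbd} (noting \eqref{eqn:compSPDE2} is a special case of \eqref{eqn:compSPDE1} with $f\equiv 0$), whereas the paper leaves this part implicit on the grounds (stated in its introduction) that establishing $\lambda(2)>0$ is the substantive content; both halves of your argument are sound and the positivity deductions $\kappa^{-1}(L_g^2)>0$ and $\inf_F u_0^2>0$ are exactly what is needed.
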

\begin{proof}
Theorem \ref{thm:2ndlower} directly implies the required result.
\end{proof}

\bibliography{CRFbib2}
\bibliographystyle{alpha}

\end{document}